\documentclass{amsart}
\usepackage{amsfonts,amssymb, wasysym}
\usepackage{multicol}
\usepackage{hyperref}
\hypersetup{colorlinks,citecolor=blue,linkcolor=blue}

\newtheorem{theorem}{Theorem}
\newtheorem{lemma}{Lemma}
\newtheorem{corollary}{Corollary}

\newcommand{\integers}{{\mathbb Z}}
\newcommand{\realnos}{{\mathbb R}}

\def\ov{\overline}

\def\Nu{{\rm N}}
\def\Mu{{\rm M}}

\def\Kappa{{\rm K}}

\def\Rho{{\rm P}}

\begin{document}

\title{Co-Seifert Fibrations of Compact Flat Orbifolds}

\author{John G. Ratcliffe and Steven T. Tschantz}

\address{Department of Mathematics, Vanderbilt University, Nashville, TN 37240
\vspace{.1in}}

\email{j.g.ratcliffe@vanderbilt.edu}

\date{}

\begin{abstract}
In this paper, we develop the theory for classifying all the geometric fibrations 
of compact, connected, flat $n$-orbifolds, over a 1-orbifold, up to affine equivalence. 
We apply our classification theory to classify all the geometric fibrations 
of compact, connected, flat $2$-orbifolds, over a 1-orbifold, up to affine equivalence. 
This paper is an essential part of our project to give a geometric proof of the classification 
of all closed flat 4-manifolds. 
\end{abstract}

\maketitle

\section{Introduction}\label{S:1} 
An {\it $n$-dimensional crystallographic group} ({\it $n$-space group}) 
is a discrete group $\Gamma$ of isometries of Euclidean $n$-space $E^n$ 
whose orbit space $E^n/\Gamma$ is compact. 
If $\Gamma$ is an $n$-space group, then $E^n/\Gamma$ is a compact, connected, 
flat $n$-orbifold, and conversely if $M$ is a compact, connected, flat $n$-orbifold, 
then there is an $n$-space group $\Gamma$ such that $M$ is isometric to $E^n/\Gamma$. 
Henceforth, we assume that all orbifolds are connected unless otherwise stated. 

Informally, a {\it geometric fibration} of a flat $n$-orbifold $M$ over a flat $m$-orbifold $B$ is a surjective map $\eta: M \to B$, 
with totally geodesic fibers, that restricts to a fiber bundle over the ordinary set of the base orbifold $B$. 
For the formal definition of a geometric fibration, see our paper \cite{R-T}. 
A fiber of a geometric fibration $\eta: M \to B$ over an ordinary point of $B$ is called a {\it generic fiber}, and 
a fiber of $\eta: M \to B$ over a singular point of $B$ is called a {\it singular fiber}. 

In this paper, we develop the theory for classifying all the geometric fibrations of a compact 
flat $n$-orbifold, over a 1-orbifold, up to affine equivalence. 
A geometric fibration $\eta:M \to B$ over a 1-orbifold $B$ is called a {\it co-Seifert fibration}. 
There are two possibilities for a co-Seifert fibration $\eta: M \to B$: either $B$ is a circle or $B$ is a closed interval. 
If $B$ is a circle, then $\eta: M \to B$ is a fiber bundle projection, and the classification of such fibrations is easy and well known. 
If $B$ is a closed interval, then $\eta$ has precisely two singular fibers, and the classification of such fibrations is intricate and new. 

In our previous paper \cite{R-T}, we proved that a geometric fibration $\eta: M \to B$ 
corresponds to a space group extension 
$$1 \to \Nu \hookrightarrow \Gamma \to \Gamma/\Nu\to 1.$$
The problem of classifying co-Seifert fibrations $\eta: M \to B$, up to affine equivalence, is equivalent to classifying all pairs $(\Gamma,\Nu)$ such that $\Gamma$ is an $n$-space 
group and $\Nu$ is a normal subgroup of $\Gamma$, such that $\Gamma/\Nu$ is 
infinite cyclic or infinite dihedral, up to isomorphism. 
In our recent paper \cite{R-T-B}, we proved that for each dimension $n$, there are only finitely many isomorphism classes of such pairs. As an application of our theory, we describe the classification for $n = 2$.  
In a subsequent paper, we will describe the classification for $n = 3$.

We became interested in co-Seifert fibrations of compact flat orbifolds 
because every closed flat 4-manifold geometrically fibers over a 1-orbifold according to Hillman \cite{H}. 
In his paper \cite{H}, Hillman attempts to classify all closed flat 4-manifolds, up to affine equivalence, by listing 
all the possible co-Seifert fibrations of a closed flat 4-manifold. Hillman's argument is incomplete because 
he did not list all the possible co-Seifert fibrations over a closed interval. 
In this paper, we develop the theory necessary to find all the possible co-Seifert fibrations of a closed 
flat 4-manifold over a closed interval.  We will complete Hillman's geometric classification 
of closed flat 4-manifolds in a future paper. 

Although there are computer programs such as CARAT \cite{O} that classify crystallographic groups and 
compact flat manifolds, 
we have found in practice that the easiest way to identify a compact flat 3- or 4-manifold is by understanding 
how the manifold fibers. Knowing how a flat manifold fibers also reveals much about the geometry of the manifold. 
For example, the most salient feature of the geometry of the Hantzche-Wendt flat 3-manifold \cite{Z}
is that it geometrically fibers over a closed interval with generic fiber a torus and singular fibers Klein bottles.

Our paper is organized as follows:  In Sections \ref{S:2} - \ref{S:5}, we review definitions and basic 
results from our previous papers \cite{R-T, R-T-I, R-T-C, R-T-B} that are necessary to  
read this paper. The main results of our paper are in Sections \ref{S:7} and \ref{S:8}.  
In Section \ref{S:7}, we describe the classification of all pairs $(\Gamma, \Nu)$ such that 
$\Gamma$ is an $n$-space group and $\Nu$ is a normal subgroup of $\Gamma$, with $\Gamma/\Nu$ infinite cyclic, up to isomorphism.  In Section \ref{S:8}, we describe the classification 
of all pairs $(\Gamma, \Nu)$ such that $\Gamma/\Nu$ is infinite dihedral. 
In Section \ref{S:10}, we describe the classification of all the geometric fibrations of 
compact, connected, flat 2-orbifolds up to affine equivalence. 

\section{Complete Normal Subgroups}\label{S:2} 

A map $\phi:E^n\to E^n$ is an isometry of $E^n$ 
if and only if there is an $a\in E^n$ and an $A\in {\rm O}(n)$ such that 
$\phi(x) = a + Ax$ for each $x$ in $E^n$. 
We shall write $\phi = a+ A$. 
In particular, every translation $\tau = a + I$ is an isometry of $E^n$. 

Let $\Gamma$ be an $n$-space group. 
Define $\eta:\Gamma \to {\rm O}(n)$ by $\eta(a+A) = A$. 
Then $\eta$ is a homomorphism whose kernel is the group $\mathrm{T}$ 
of translations in $\Gamma$. 
The image of $\eta$ is a finite group $\Pi$ called the {\it point group} of $\Gamma$.

Let $\Nu$ be a subgroup of an $n$-space group $\Gamma$. 
Define the {\it span} of $\Nu$ by the formula
$${\rm Span}(\Nu) = {\rm Span}\{a\in E^n:a+I\in \Nu\}.$$
Note that ${\rm Span}(\Nu)$ is a vector subspace $V$ of $E^n$.  
Let $V^\perp$ denote the orthogonal complement of $V$ in $E^n$. 

\begin{theorem} {\rm (Theorem 2 \cite{R-T})}\label{T:1} 
Let ${\rm N}$ be a normal subgroup of an $n$-space group $\Gamma$, 
and let $V = {\rm Span}(\Nu)$. 
\begin{enumerate}
\item If $b+B\in\Gamma$, then $BV=V$. 
\item If $a+A\in \Nu$,  then $a\in V$ and $ V^\perp\subseteq{\rm Fix}(A)$. 
\item The group $\Nu$ acts effectively on $V$ as a space group of isometries. 
\end{enumerate}
\end{theorem}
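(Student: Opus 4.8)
The plan is to establish the three assertions in the order (1), (2), (3), since (3) uses (1) and (2) and the proof of (2) uses (1). Throughout I will use the composition law $(a+A)(b+B)=(a+Ab)+AB$ and the inversion formula $(b+B)^{-1}=-B^{-1}b+B^{-1}$ for affine isometries, together with the first Bieberbach theorem, which guarantees that the translation subgroup $\mathrm{T}$ of $\Gamma$ spans $E^n$ and has finite index in $\Gamma$. Set $S=\{a\in E^n: a+I\in\mathrm{N}\}$, so that $V=\mathrm{Span}(S)$ by definition. For (1), given $b+B\in\Gamma$ and $a+I\in\mathrm{N}$, a direct computation gives $(b+B)(a+I)(b+B)^{-1}=Ba+I$, which lies in $\mathrm{N}$ by normality; hence $BS\subseteq S$, so $BV=\mathrm{Span}(BS)\subseteq V$, and since $B\in\mathrm{O}(n)$ is invertible, $BV=V$.

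For (2), I will first show $\mathrm{Image}(I-A)\subseteq V$ for every $a+A\in\mathrm{N}$. For any $t+I\in\mathrm{T}$, normality gives $(t+I)(a+A)(t+I)^{-1}\in\mathrm{N}$; computing this element, namely $(a+t-At)+A$, and then multiplying on the right by $(a+A)^{-1}\in\mathrm{N}$ yields $(t-At)+I\in\mathrm{N}$, so $(I-A)t\in S\subseteq V$. Since $\mathrm{T}$ spans $E^n$, linearity gives $(I-A)E^n\subseteq V$. Taking orthogonal complements and using that, for an orthogonal $A$, $\mathrm{Image}(I-A)^\perp=\mathrm{Ker}(I-A^{-1})=\mathrm{Fix}(A)$, we conclude $V^\perp\subseteq\mathrm{Fix}(A)$; in particular $AV^\perp=V^\perp$ and hence $AV=V$. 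To obtain $a\in V$, let $m$ be the order of $A$ in the finite point group $\Pi$; then $(a+A)^m=\big(\sum_{j=0}^{m-1}A^ja\big)+I\in\mathrm{N}$, so $\sum_{j=0}^{m-1}A^ja\in V$. Writing $a=a_V+a_\perp$ with $a_V\in V$, $a_\perp\in V^\perp$, and using $A^ja_\perp=a_\perp$, the $V^\perp$-component of this sum is $ma_\perp$; as the whole sum lies in $V$, we get $ma_\perp=0$, hence $a_\perp=0$ and $a\in V$.

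For (3), parts (1) and (2) show that each $a+A\in\mathrm{N}$ carries the linear subspace $V$ to itself and restricts there to the isometry $v\mapsto a+Av$ of $V\cong E^k$ with $k=\dim V$; this defines a homomorphism $r:\mathrm{N}\to\mathrm{Isom}(V)$. It is injective: if $r(a+A)=\mathrm{id}_V$ then $a=0$ and $A|_V=I$, and since $A|_{V^\perp}=I$ by (2), $a+A$ is the identity. Its image is discrete, because any sequence in $\mathrm{N}$ whose $r$-images tend to $\mathrm{id}_V$ consists of elements $a_i+A_i$ with $a_i\in V$ and $A_i$ trivial on $V^\perp$, so these elements tend to $\mathrm{id}_{E^n}$ and are therefore eventually trivial by discreteness of $\Gamma$. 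Finally $r(\mathrm{N})$ is cocompact: $\mathrm{N}\cap\mathrm{T}$ is a subgroup of the lattice $\mathrm{T}$, hence a lattice, and its span is $V$ by definition, so $r(\mathrm{N}\cap\mathrm{T})$ is a full-rank translation lattice of $V$ and $V/r(\mathrm{N})$ is a quotient of the compact torus $V/r(\mathrm{N}\cap\mathrm{T})$. Thus $\mathrm{N}$ acts effectively on $V$ as a space group of isometries.

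The step I expect to be the main obstacle is the implication $\mathrm{Image}(I-A)\subseteq V\Rightarrow V^\perp\subseteq\mathrm{Fix}(A)$ in part (2): this is precisely what forces $\mathrm{N}$ to act trivially on $V^\perp$, and it relies essentially on two facts working together — that $A$ is an \emph{orthogonal} transformation, and that the translation subgroup of $\Gamma$ spans all of $E^n$. Everything else reduces to routine manipulation of the affine composition law and to the elementary observation that a subgroup of the lattice $\mathrm{T}$ is again a lattice, with span $V$.
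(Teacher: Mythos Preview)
The paper does not give a proof of this theorem: it is stated there as a citation of Theorem~2 from the authors' earlier paper \cite{R-T}, so there is no in-paper argument to compare against. Your proof is correct and is essentially the standard one. Part (1) is the one-line conjugation computation; your key step in (2), conjugating $a+A$ by translations to get $(I-A)t\in V$ and then using $\mathrm{Image}(I-A)^\perp=\mathrm{Fix}(A)$ for orthogonal $A$, is exactly the right idea, and the order-$m$ power trick to force $a\in V$ is clean. In (3), your discreteness argument is fine because $A_i|_{V^\perp}=I$ forces convergence in $\mathrm{Isom}(E^n)$, and your cocompactness step correctly uses that a discrete subgroup of translations whose span equals $V$ must have rank $\dim V$. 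One wording nit: saying ``$\mathrm{N}\cap\mathrm{T}$ is a subgroup of the lattice $\mathrm{T}$, hence a lattice'' is slightly loose, since a subgroup of a lattice need not be full rank; but you immediately supply the missing ingredient (its span is $V$), so the argument is sound.
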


Let $\Gamma$ be an $n$-space group. 
The {\it dimension} of $\Gamma$ is $n$. 
If $\Nu$ is a normal subgroup of $\Gamma$, 
then $\Nu$ is a $m$-space group with $m= \mathrm{dim}(\mathrm{Span}(\Nu))$ by Theorem \ref{T:1}(3). 

\vspace{.1in}
\noindent{\bf Definition:}
Let $\Nu$ be a normal subgroup of an $n$-space group $\Gamma$, 
and let $V = {\rm Span}(\Nu)$.  
Then $\Nu$ is said to be a {\it complete normal subgroup} of $\Gamma$ if 
$$\Nu= \{a+A\in \Gamma: a\in V\ \hbox{and}\ V^\perp\subseteq{\rm Fix}(A)\}.$$

\begin{lemma} {\rm (Lemma 1 \cite{R-T})}\label{L:1} 
Let $\Nu$ be a complete normal subgroup of an $n$-space group $\Gamma$, 
and let $V={\rm Span}(\Nu)$. 
Then $\Gamma/\Nu$ acts effectively as a space group of isometries of $E^n/V$ 
by the formula
$({\rm N}(b+B))(V+x) = V+ b+Bx.$
\end{lemma}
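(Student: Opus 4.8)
The plan is to verify that the proposed formula
$$({\rm N}(b+B))(V+x) = V + b + Bx$$
is well-defined, that it defines an action by isometries of $E^n/V$, and that this action is effective; the space group property then follows from the fact that $\Gamma$ is an $n$-space group together with Theorem \ref{T:1}.

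First I would check well-definedness, which has two parts. On the left, one must see that the value does not depend on the choice of coset representative $b+B$ of the coset ${\rm N}(b+B)$: if $a+A \in \Nu$, then $(b+B)(a+A) = (b+Ba)+BA$, and since $a \in V$ by Theorem \ref{T:1}(2) we have $Ba \in BV = V$ by Theorem \ref{T:1}(1), so $(b+Ba)+BA$ sends $V+x$ to $V + b + Ba + BAx = V + b + BAx$; we then need $V + BAx = V + Bx$ for all $x$, i.e. $B(Ax - x) \in V$, which holds because $Ax - x \in V$ (as $V^\perp \subseteq {\rm Fix}(A)$ forces $A$ to act as the identity on $V^\perp$, hence $(A-I)E^n \subseteq V$) and $BV = V$. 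On the right, one must see that the formula respects the coset $V+x$: replacing $x$ by $x + v$ with $v \in V$ changes the output by $Bv \in BV = V$, so the coset $V + b + Bx$ is unchanged. Next I would check that $V+x \mapsto V+b+Bx$ is an isometry of $E^n/V$ with its quotient metric — this is essentially immediate since $b+B$ is an isometry of $E^n$ preserving the fibers $V+x$ of the orthogonal projection $E^n \to V^\perp$, so it descends to an isometry of $E^n/V \cong V^\perp$. Then I would check the homomorphism property: $({\rm N}(b+B))({\rm N}(c+C)) = {\rm N}((b+Bc)+BC)$ maps $V+x$ to $V + b + Bc + BCx$, which agrees with first applying ${\rm N}(c+C)$ then ${\rm N}(b+B)$.

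The effectiveness is where the completeness hypothesis is essential, and I expect it to be the main point. Suppose ${\rm N}(b+B)$ acts as the identity on $E^n/V$, i.e. $V + b + Bx = V + x$ for all $x \in E^n$. Taking $x = 0$ gives $b \in V$. Then $Bx - x \in V$ for all $x$, so $V^\perp \subseteq {\rm Fix}(B)$: indeed for $w \in V^\perp$ we get $Bw - w \in V$, but also $Bw - w \in V^\perp$ since $B$ preserves $V$ and hence $V^\perp$ (using Theorem \ref{T:1}(1)), so $Bw - w = 0$. Thus $b+B \in \Gamma$ with $b \in V$ and $V^\perp \subseteq {\rm Fix}(B)$, so by the definition of complete normal subgroup, $b+B \in \Nu$, i.e. ${\rm N}(b+B)$ is the identity element of $\Gamma/\Nu$. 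Hence the action is effective. Finally, since $\Gamma/\Nu$ acts effectively by isometries on $E^n/V \cong E^{n-\dim V}$, and $\Gamma$ acts cocompactly on $E^n$ so that $\Gamma/\Nu$ acts cocompactly on $E^n/V$, it follows that $\Gamma/\Nu$ is a space group of isometries of $E^n/V$. The only genuine obstacle is bookkeeping: keeping straight how $B$ and its coset behave on $V$ versus $V^\perp$, and invoking parts (1) and (2) of Theorem \ref{T:1} at exactly the right moments; there is no deep difficulty beyond that.
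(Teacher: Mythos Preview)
The paper does not supply a proof of this lemma; it simply cites Lemma~1 of \cite{R-T}. Your argument is the natural one and is essentially correct: well-definedness and the homomorphism property are straightforward consequences of Theorem~\ref{T:1}(1)(2), and effectiveness is exactly where completeness enters, as you identify.

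There is one small gap in your final paragraph. To conclude that $\Gamma/\Nu$ is a space group on $E^n/V\cong V^\perp$ you need effectiveness, cocompactness, \emph{and} discreteness; you verify only the first two. Discreteness does not follow from effectiveness plus cocompactness alone (e.g.\ $\realnos$ acting on itself by translations). The missing step is to observe that the translation lattice $\mathrm{T}$ of $\Gamma$ projects to a lattice in $V^\perp$: by completeness $\mathrm{T}\cap V$ equals the translation lattice of $\Nu$, which is a rank-$m$ lattice in $V$; since $\mathrm{T}\cap V$ is then a pure subgroup of $\mathrm{T}$, the quotient $\mathrm{T}/(\mathrm{T}\cap V)$ is free of rank $n-m$ and its image in $V^\perp$ is a lattice. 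As $\mathrm{T}\Nu/\Nu$ has finite index in $\Gamma/\Nu$, discreteness of $\Gamma/\Nu$ in $\mathrm{Isom}(V^\perp)$ follows. With this addition your proof is complete.
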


\noindent{\bf Remark 1.} A normal subgroup $\Nu$ of a space group $\Gamma$ is complete 
precisely when $\Gamma/\Nu$ is a space group by Theorem 5 of \cite{R-T}. 
In particular, if $\Gamma/\Nu$ is infinite cyclic or infinite dihedral, then $\Nu$ 
is a complete normal subgroup of $\Gamma$.

\begin{theorem}{\rm (Theorem 4 \cite{R-T})}\label{T:2} 
Let ${\rm N}$ be a complete normal subgroup of an $n$-space group $\Gamma$, 
and let $V = {\rm Span}({\rm N})$.  
Then the flat orbifold $E^n/\Gamma$ geometrically fibers over the flat orbifold 
$(E^n/V)/(\Gamma/{\rm N})$ with generic fiber the flat orbifold $V/{\rm N}$ and 
fibration projection $\eta_V: E^n/\Gamma \to (E^n/V)/(\Gamma/{\rm N})$ defined by the formula 
$\eta_V(\Gamma x) = (\Gamma/\Nu)(V+x).$
\end{theorem}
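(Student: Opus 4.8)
The plan is to exhibit $\eta_V$ as the orbifold map induced by the orthogonal projection of $E^n$ onto $V^\perp$, and to verify the definition of a geometric fibration from \cite{R-T} against this concrete picture. To set up the product structure, note that by Theorem \ref{T:1}(1) every $b+B$ in $\Gamma$ satisfies $BV=V$, hence $BV^\perp=V^\perp$, so $B$ decomposes as $B_1\oplus B_2$ along $E^n=V\oplus V^\perp$; writing $b=b_1+b_2$ accordingly, the isometry $x\mapsto b+Bx$ becomes $(x_1,x_2)\mapsto(b_1+B_1x_1,\ b_2+B_2x_2)$ in these coordinates, so $\Gamma$ preserves the splitting $E^n=V\times V^\perp$. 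There is then a homomorphism $\rho:\Gamma\to{\rm Isom}(V^\perp)$ with $\rho(b+B)=b_2+B_2$, and the orthogonal projection $p:E^n\to V^\perp$ is $\rho$-equivariant. Completeness of $\Nu$ gives $\ker\rho=\{a+A\in\Gamma:a\in V,\ V^\perp\subseteq{\rm Fix}(A)\}=\Nu$, so $\rho$ induces an isomorphism of $\Gamma/\Nu$ onto $\ov\Gamma:=\rho(\Gamma)$, which by Lemma \ref{L:1} and Remark 1 is a space group of isometries of $V^\perp$; under the isometry $E^n/V\cong V^\perp$ sending $V+x$ to $p(x)$, this matches the action in Lemma \ref{L:1}. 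Hence $(E^n/V)/(\Gamma/\Nu)$ is the flat orbifold $V^\perp/\ov\Gamma$, and $p$ descends to a well-defined continuous surjection $\eta_V:E^n/\Gamma\to V^\perp/\ov\Gamma$ given by the stated formula.

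Next I would identify the fibers. For $w\in V^\perp$ the set $p^{-1}(\ov\Gamma w)$ is the union of the affine slices $V\times\{w'\}$ for $w'\in\ov\Gamma w$, and since $\gamma^{-1}(V\times\{\rho(\gamma)w\})=V\times\{w\}$ for any $\gamma$, the fiber $\eta_V^{-1}(\ov\Gamma w)$ is exactly the image of the single slice $V\times\{w\}$ in $E^n/\Gamma$; as affine subspaces are totally geodesic, every fiber is a totally geodesic flat suborbifold. Since $\gamma(V\times\{w\})=V\times\{\rho(\gamma)w\}$, the subgroup of $\Gamma$ carrying $V\times\{w\}$ to itself is $\rho^{-1}({\rm Stab}_{\ov\Gamma}(w))$, and each $a+A\in\Nu$ fixes $w$ (as $V^\perp\subseteq{\rm Fix}(A)$) and acts on $V\times\{w\}$ through its effective space-group action on $V$ from Theorem \ref{T:1}(3). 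Thus the fiber over $\ov\Gamma w$ is $(V\times\{w\})/\rho^{-1}({\rm Stab}_{\ov\Gamma}(w))$, and when $\ov\Gamma w$ is an ordinary point of the base this stabilizer is trivial, the slice stabilizer is exactly $\Nu$, and the fiber is $(V\times\{w\})/\Nu\cong V/\Nu$, which identifies the generic fiber.

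Finally I would establish local triviality over the ordinary set $B_0$ of $B=V^\perp/\ov\Gamma$. Around an ordinary point, pick $\tilde U\subseteq V^\perp$ mapping homeomorphically onto an open set $U$ and small enough that $\ov\gamma\tilde U\cap\tilde U\neq\emptyset$ forces $\ov\gamma=1$, which is possible because $\ov\Gamma$ acts properly discontinuously with trivial stabilizer at the chosen point; then $U\subseteq B_0$ automatically. Now $p^{-1}(\tilde U)=V\times\tilde U$, the set of $\gamma\in\Gamma$ with $\gamma(V\times\tilde U)\cap(V\times\tilde U)\neq\emptyset$ is contained in $\ker\rho=\Nu$, and $\Nu$ acts on $V\times\tilde U$ only through the $V$-factor; hence $\eta_V^{-1}(U)$ is the image of $V\times\tilde U$, namely $(V\times\tilde U)/\Nu\cong(V/\Nu)\times U$, and under this homeomorphism $\eta_V$ becomes projection to $U$. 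These charts are the required bundle trivializations, with transition maps restrictions of elements of $\ov\Gamma$, hence affine. I expect the main work to be bookkeeping rather than any single hard estimate: the delicate part is matching the constructions above with the precise formal definition of geometric fibration in \cite{R-T} — in particular, verifying that $\eta_V$ is a morphism of orbifolds and that the trivializations assemble compatibly on overlaps.
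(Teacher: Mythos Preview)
The paper does not prove Theorem \ref{T:2}: it is quoted verbatim from \cite{R-T} (Theorem~4 there) and no argument is given here. So there is no in-paper proof to compare your attempt against.

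That said, your sketch is correct and is essentially the standard argument, and it dovetails with the constructions the paper lays out in Section~\ref{S:3}. Your homomorphism $\rho$ is exactly the map $\Rho'(\gamma)=\gamma'$ defined there, your identification $\ker\rho=\Nu$ is precisely the completeness hypothesis, and the product decomposition you use is the one the paper sets up before Theorem~\ref{T:3}. Your local-triviality step over the ordinary set is the right one; as you note, what remains is only to check compatibility with the formal definition of geometric fibration in \cite{R-T}, which is bookkeeping rather than a new idea.
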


\section{The Generalized Calabi Construction}\label{S:3} 

Let $\Nu$ be a complete normal subgroup of an $n$-space group $\Gamma$, 
let $V = \mathrm{Span}(\Nu)$, and let $V^\perp$ be the orthogonal complement of $V$ in $E^n$.  
Let $\gamma \in \Gamma$.   Then $\gamma = b+B$ with $b\in E^n$ and $B\in \mathrm{O}(n)$. 
Write $b = \overline b + b'$ with $\overline b \in V$ and $b' \in V^\perp$. 
Let $\overline B$ and $B'$ be the orthogonal transformations of $V$ and $V^\perp$, respectively, 
obtained by restricting $B$. 
Let $\overline \gamma = \overline b + \overline B$ and $\gamma' = b' + B'$. 
Then $\overline \gamma$ and $\gamma'$ are isometries of $V$ and $V^\perp$, respectively.

Euclidean $n$-space $E^n$ decomposes as the Cartesian product $E^n = V \times V^\perp$. 
Let $x\in E^n$.  Write $x = v+w$ with $v\in V$ and $w\in V^\perp$. Then 
$$(b+B)x = b+Bx = \overline{b}+b' + Bv + Bw = (\overline{b}+\overline{B}v) + (b'+B'w).$$
Hence, the action of $\Gamma$ on $E^n$ corresponds to the diagonal action of $\Gamma$ 
on $V\times V^\perp$ defined by the formula
$$\gamma(v,w) = (\overline{\gamma}v,\gamma'w).$$
Here $\Gamma$ acts on both $V$ and $V^\perp$ via isometries. 
The kernel of the corresponding homomorphism from $\Gamma$ to $\mathrm{Isom}(V)$ 
is the group
$$\Kappa = \{b+B\in\Gamma: b \in V^\perp\ \hbox{and}\ V \subseteq \mathrm{Fix}(B)\}.$$
We call $\Kappa$ the {\it kernel of the action} of $\Gamma$ on $V$. 
The group $\Kappa$ is a normal subgroup of $\Gamma$. 
The action of $\Gamma$ on $V$ induces an effective action of $\Gamma/\Kappa$ on $V$ via isometries.  
Note that $\Nu\cap\Kappa =\{I\}$, and each element of $\Nu$ commutes with each 
element of $\Kappa$. 
Hence $\Nu\Kappa$ is a normal subgroup of $\Gamma$, and $\Nu\Kappa$ is the 
direct product of $\Nu$ and $\Kappa$. 
The group $\Gamma/\Kappa$ acts on $V$ as a discrete group of isometries if and only if 
$\Gamma/\Nu\Kappa$ is a finite group by Theorem 3(4) of \cite{R-T-C}. 

The group $\Nu$ is the kernel of the action of $\Gamma$ on $V^\perp$, and so the action 
of $\Gamma$ on $V^\perp$ induces  an effective action of $\Gamma/\Nu$ on $V^\perp$ via isometries. 
Orthogonal projection from $E^n$ to $V^\perp$ induces an isometry from $E^n/V$ to $V^\perp$. 
Hence $\Gamma/\Nu$ acts on $V^\perp$ as a space group of isometries by Lemma \ref{L:1}. 

Let $\ov \Gamma = \{\ov \gamma: \gamma \in \Gamma\}$. 
If $\gamma \in \Gamma$, then $(\overline{\gamma})^{-1} = \overline{\gamma^{-1}}$, and 
if $\gamma_1, \gamma_2 \in \Gamma$, 
then $\overline{\gamma_1}\,\overline{\gamma_2}= \overline{\gamma_1\gamma_2}$.   
Hence $\ov \Gamma$ is a subgroup of $\mathrm{Isom}(V)$. 
The map $\Upsilon: \Gamma \to \ov \Gamma$ defined by $\Upsilon(\gamma)= \ov\gamma$ is an epimorphism with kernel $\Kappa$. 
The group $\ov\Gamma$ is a discrete subgroup of $\mathrm{Isom}(V)$ 
if and only if $\Gamma/\Nu\Kappa$ is finite by Theorem 3(4) of \cite{R-T-C}. 

Let $\Gamma' = \{\gamma' : \gamma \in \Gamma\}$.  
If $\gamma \in \Gamma$, then $(\gamma')^{-1} = (\gamma^{-1})'$, and 
if $\gamma_1, \gamma_2 \in \Gamma$, 
then $\gamma_1'\gamma_2' = (\gamma_1\gamma_2)'$.   
Hence $\Gamma'$ is a subgroup of $\mathrm{Isom}(V^\perp)$. 
The map $\Rho' : \Gamma \to \Gamma'$ 
defined by $\Rho'(\gamma) = \gamma'$ is epimorphism with kernel $\Nu$, 
since $\Nu$ is a complete normal subgroup of $\Gamma$.  
Hence $\Rho'$ induces an isomorphism $\Rho: \Gamma/\Nu \to \Gamma'$ defined by $\Rho(\Nu\gamma) = \gamma'$. 
The group $\Gamma'$ is a space group of isometries of $V^\perp$ 
with $V^\perp/\Gamma' = V^\perp/(\Gamma/\Nu)$. 

Let $\ov \Nu = \{\ov \nu: \nu \in \Nu\}$.  
Then $\ov \Nu$ is a subgroup of $\mathrm{Isom}(V)$. 
The map $\Upsilon: \Nu \to \ov \Nu$ defined by $\Upsilon(\nu) = \ov\nu$ is an isomorphism. 
The group $\ov\Nu$ is a space group of isometries of $V$ with $V/\ov{\Nu} = V/\Nu$. 

The action of $\Gamma$ on $V$ induces an action of $\Gamma/\Nu$ on $V/\Nu$ 
defined by 
$$(\Nu\gamma)(\Nu v) = \Nu \overline{\gamma}v.$$ 
The action of $\Gamma/\Nu$ on $V/\Nu$ determines a homomorphism 
$$\Xi : \Gamma/\Nu \to \mathrm{Isom}(V/\Nu)$$
defined by $\Xi(\Nu \gamma) = \overline \gamma_\star$, where $\overline \gamma_\star: V/\Nu \to V/\Nu$ 
is defined by $\overline\gamma_\star(\Nu v) = \Nu \overline\gamma(v)$.

The action of $\Nu$ on $V^\perp$ is trivial and the action of $\Kappa$ on $V$ is trivial. 
Hence $E^n/\Nu\Kappa$ decomposes as the Cartesian product 
$E^n/\Nu\Kappa = V/\Nu \times V^\perp/\Kappa.$

The action of $\Gamma/\Nu\Kappa$ on $E^n/\Nu\Kappa$ corresponds 
to the diagonal action of $\Gamma/\Nu\Kappa$ on $V/\Nu \times V^\perp/\Kappa$ via isometries 
defined by the formula
$$(\Nu\Kappa(b+B))(\Nu v,\Kappa w) = (\Nu(c+Bv),\Kappa(d+Bw)).$$
Hence, we have the following theorem.
\begin{theorem}\label{T:3} 
{\rm (The Generalized Calabi Construction)}
Let $\Nu$ be a complete normal subgroup of an $n$-space group $\Gamma$, 
and let $\Kappa$ be the kernel of the action of $\Gamma$ on $V= \mathrm{Span}(\Nu)$. 
Then the map
$$\chi: E^n/\Gamma \to (V/\Nu\times V^\perp/\Kappa)/(\Gamma/\Nu\Kappa)$$
defined by $\chi(\Gamma x) = (\Gamma/\Nu\Kappa)(\Nu v, \Kappa w)$, 
with  $x = v + w$ and $v\in V$ and $w\in V^\perp$, is an isometry. 
\end{theorem}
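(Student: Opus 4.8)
The plan is to verify that $\chi$ is well-defined, bijective, and a local isometry, exploiting the Cartesian product decomposition already established in the text. First I would check that $\chi$ is well-defined: if $\Gamma x = \Gamma y$, then $y = \gamma x$ for some $\gamma = b+B \in \Gamma$, and writing $x = v+w$, $y = v_1 + w_1$ with $v,v_1 \in V$ and $w,w_1 \in V^\perp$, the diagonal-action formula $\gamma(v,w) = (\overline\gamma v, \gamma' w)$ gives $v_1 = \overline\gamma v$ and $w_1 = \gamma' w$; hence $(\Nu v_1, \Kappa w_1) = (\Nu \overline\gamma v, \Kappa \gamma' w)$, and writing $\overline\gamma = \overline b + \overline B$, $\gamma' = b' + B'$ with $\overline b = c$-part and $b' = d$-part, this is exactly $(\Nu\Kappa(b+B))(\Nu v, \Kappa w)$ under the diagonal action of $\Gamma/\Nu\Kappa$ on $V/\Nu \times V^\perp/\Kappa$. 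So $\chi(\Gamma x) = \chi(\Gamma y)$.

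Next I would show $\chi$ is injective. Suppose $\chi(\Gamma x) = \chi(\Gamma y)$ with $x = v+w$, $y = v_1+w_1$ decomposed as above. Then there is $\gamma = b+B \in \Gamma$ with $(\Nu v_1, \Kappa w_1) = (\Nu\Kappa(b+B))(\Nu v, \Kappa w) = (\Nu\, \overline\gamma v, \Kappa\, \gamma' w)$. So $v_1 = \mu\, \overline\gamma v$ for some $\mu \in \overline\Nu$ and $w_1 = \kappa'\, \gamma' w$ for some $\kappa \in \Kappa$. Writing $\mu = \overline\nu$ for a unique $\nu \in \Nu$ (using that $\Upsilon : \Nu \to \overline\Nu$ is an isomorphism), the element $\nu\kappa\gamma \in \Gamma$ satisfies $\overline{\nu\kappa\gamma} = \overline\nu\,\overline\gamma$ (since $\overline\kappa = I$) and $(\nu\kappa\gamma)' = \kappa'\gamma'$ (since $\nu' = I$); therefore $(\nu\kappa\gamma)(v,w) = (v_1, w_1)$, i.e. $(\nu\kappa\gamma)(x) = y$, so $\Gamma x = \Gamma y$. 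Surjectivity is immediate: any class $(\Gamma/\Nu\Kappa)(\Nu v, \Kappa w)$ is hit by $\Gamma(v+w)$.

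Finally I would argue $\chi$ is an isometry. Since $E^n = V \times V^\perp$ orthogonally and the $\Gamma$-action is the diagonal isometric action $\gamma(v,w) = (\overline\gamma v, \gamma' w)$, the natural map $E^n \to V/\Nu \times V^\perp/\Kappa$ sending $x = v+w$ to $(\Nu v, \Kappa w)$ is a local isometry (it is the product of the quotient maps $V \to V/\Nu$ and $V^\perp \to V^\perp/\Kappa$, each a local isometry since $\Nu$ acts freely and properly discontinuously near generic points and more generally as a space group, and $V/\Nu \times V^\perp/\Kappa$ carries the product metric). This map is equivariant for $\Gamma \to \Gamma/\Nu\Kappa$ by the displayed diagonal-action formula, so it descends to the map $\chi$ on the orbit spaces; a descended map between flat orbifolds that is a bijection and a local isometry is an isometry. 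I would note that the orbifold metrics on both sides are the quotient metrics, so ``local isometry plus bijection'' suffices.

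The main obstacle I expect is the bookkeeping in the injectivity step — specifically, producing from the equivalence $(\Nu v_1, \Kappa w_1) \sim (\Nu v, \Kappa w)$ an actual element of $\Gamma$ (not merely of $\Gamma/\Nu\Kappa$) carrying $x$ to $y$. The key facts that make this go through are that $\Nu\Kappa$ is the internal direct product $\Nu \times \Kappa$ with $\Nu \cap \Kappa = \{I\}$ and the two factors commuting, that $\Upsilon|_\Nu$ and the map $\nu\kappa \mapsto (\overline\nu, \kappa')$ are isomorphisms onto $\overline\Nu$ and onto the relevant subgroup of $\mathrm{Isom}(V) \times \mathrm{Isom}(V^\perp)$, and that $\overline{\kappa} = I$ for $\kappa \in \Kappa$ while $\nu' = I$ for $\nu \in \Nu$ — all of which are recorded in Section \ref{S:3}. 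Everything else is a routine consequence of the orthogonal splitting $E^n = V \times V^\perp$ and the compatibility of the quotient and product metrics.
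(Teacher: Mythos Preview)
Your proposal is correct and follows essentially the same approach as the paper: the paper establishes in the discussion preceding Theorem~\ref{T:3} that $E^n/\Nu\Kappa$ decomposes as $V/\Nu\times V^\perp/\Kappa$ with $\Gamma/\Nu\Kappa$ acting diagonally, and then simply states the theorem as an immediate consequence (``Hence, we have the following theorem''), whereas you have written out in detail the well-definedness, bijectivity, and local-isometry verifications that this entails. Your injectivity argument, lifting an element of $\Gamma/\Nu\Kappa$ to an element $\nu\kappa\gamma\in\Gamma$ via the direct-product structure $\Nu\Kappa=\Nu\times\Kappa$ and the identities $\overline\kappa=I$, $\nu'=I$, is exactly the content behind the paper's one-line decomposition $E^n/\Nu\Kappa=V/\Nu\times V^\perp/\Kappa$.
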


We call $\Gamma/\Nu\Kappa$ the {\it structure group} 
of the geometric fibered orbifold structure on $E^n/\Gamma$ 
determined by the complete normal subgroup $\Nu$ of $\Gamma$ as in Theorem 2. 

The natural projection from $V/\Nu\times V^\perp/\Kappa$ to $V^\perp/\Kappa$ 
induces a continuous surjection
$$\pi^\perp: (V/\Nu\times V^\perp/\Kappa)/(\Gamma/\Nu\Kappa) \to V^\perp/(\Gamma/\Nu).$$
Orthogonal projection from $E^n$ to $V^\perp$ induces an isometry from $E^n/V$ to $V^\perp$ 
which in turn induces an isometry 
$$\psi^\perp: (E^n/V)/(\Gamma/\Nu) \to V^\perp/(\Gamma/\Nu).$$
\begin{theorem}\label{T:4} 
{\rm (Theorem 5 \cite{R-T-C})}
The following diagram commutes
\[\begin{array}{ccc}
E^n/\Gamma & {\buildrel \chi\over\longrightarrow} &
(V/\Nu\times V^\perp/\Kappa)/(\Gamma/\Nu\Kappa) \\
\eta_V \downarrow \  & & \downarrow\pi^\perp \\
(E^n/V)/(\Gamma/\Nu) & {\buildrel \psi^\perp\over\longrightarrow}  & V^\perp/(\Gamma/\Nu). 
\end{array}\] 
\end{theorem}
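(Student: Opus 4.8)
The plan is to verify the commutativity of the diagram by chasing an arbitrary point $\Gamma x \in E^n/\Gamma$ around both ways and checking that the two results agree in $V^\perp/(\Gamma/\Nu)$. First I would fix $x \in E^n$ and write $x = v + w$ with $v \in V$ and $w \in V^\perp$, as in the statement of the Generalized Calabi Construction (Theorem \ref{T:3}). Going across and then down, we have $\chi(\Gamma x) = (\Gamma/\Nu\Kappa)(\Nu v, \Kappa w)$, and applying $\pi^\perp$ gives the class of $\Kappa w$ under the action of $\Gamma/\Nu$ on $V^\perp/\Kappa$; since the action of $\Gamma$ on $V^\perp$ has kernel $\Nu$ and $\Kappa \subseteq \Nu\Kappa$ maps trivially, this is naturally identified with the point of $V^\perp/(\Gamma/\Nu)$ represented by $w$. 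Going down and then across, we have $\eta_V(\Gamma x) = (\Gamma/\Nu)(V + x)$ by Theorem \ref{T:2}, and then $\psi^\perp$ sends $(V+x)$ to the orthogonal projection of $x$ onto $V^\perp$, namely $w$, modulo the induced action of $\Gamma/\Nu$. So both composites send $\Gamma x$ to the $(\Gamma/\Nu)$-orbit of $w$ in $V^\perp/(\Gamma/\Nu)$, and the diagram commutes.

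The key steps, in order, are: (1) unwind the definitions of all four maps on a representative point $x = v+w$; (2) identify the target $V^\perp/(\Gamma/\Nu)$ appearing on the lower right as the common value of the two composites, being careful that the $\Gamma/\Nu$-action on $V^\perp$ used by $\psi^\perp$ (coming from Lemma \ref{L:1} via the isometry $E^n/V \cong V^\perp$) agrees with the $\Gamma/\Nu$-action on $V^\perp/\Kappa$ used by $\pi^\perp$ after collapsing the trivial $\Kappa$-factor; (3) check the calculation is independent of the choice of representative $x$ in its $\Gamma$-orbit, which follows because all four maps are well defined on the quotients by the earlier results. Most of this is bookkeeping once the actions are written out explicitly using the diagonal-action formula $\gamma(v,w) = (\overline\gamma v, \gamma' w)$ established in Section \ref{S:3}.

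The main obstacle I anticipate is step (2): making precise the identification of the two a priori different descriptions of the $\Gamma/\Nu$-action on $V^\perp$. On one side, $\pi^\perp$ comes from the projection $V/\Nu \times V^\perp/\Kappa \to V^\perp/\Kappa$ followed by passing to the quotient by $\Gamma/\Nu\Kappa$; one must observe that the residual action of $\Gamma/\Nu\Kappa$ on $V^\perp/\Kappa$ descends to the action of $\Gamma/\Nu$ on $V^\perp$ because $\Kappa$ acts trivially on $V^\perp$ in the first place (so $V^\perp/\Kappa$ is just $V^\perp$ with the decorative $\Kappa$-label), and because $\Gamma/\Nu\Kappa \to \Gamma/\Nu$ is the relevant quotient. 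On the other side, $\psi^\perp$ is built from the standard isometry $E^n/V \to V^\perp$ intertwining the $\Gamma/\Nu$-actions, which is exactly the content already recorded in Section \ref{S:3}. Once one notes that $\Rho: \Gamma/\Nu \to \Gamma'$ and the formula $\gamma'w = b' + B'w$ govern both actions, the two coincide on the nose, and the diagram chase closes. I would therefore present the argument as a short explicit computation, citing Theorems \ref{T:2} and \ref{T:3} and Lemma \ref{L:1} for the definitions of $\eta_V$, $\chi$, and the $\Gamma/\Nu$-action, and leaving the verification of representative-independence to the reader as routine.
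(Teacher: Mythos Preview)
Your overall strategy---a direct diagram chase starting from a representative $x = v + w$---is the natural and correct approach, and the paper in fact does not supply its own proof here: Theorem~\ref{T:4} is simply quoted from \cite{R-T-C}.

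That said, your justification of the key identification in step (2) contains a genuine error. You write that ``$\Kappa$ acts trivially on $V^\perp$ in the first place (so $V^\perp/\Kappa$ is just $V^\perp$ with the decorative $\Kappa$-label).'' This is false: $\Kappa$ is the kernel of the $\Gamma$-action on $V$, not on $V^\perp$. Elements of $\Kappa$ have the form $b+B$ with $b \in V^\perp$ and $V \subseteq \mathrm{Fix}(B)$, so they typically act nontrivially on $V^\perp$; the quotient $V^\perp/\Kappa$ is a genuine orbit space (a circle or interval in the co-Seifert situation). You also have the quotient map backwards: there is a surjection $\Gamma/\Nu \to \Gamma/\Nu\Kappa$, not one in the other direction. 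The correct reason that $(V^\perp/\Kappa)/(\Gamma/\Nu\Kappa) = V^\perp/(\Gamma/\Nu)$ is that $\Nu \cap \Kappa = \{I\}$, so $\Kappa$ embeds in $\Gamma/\Nu$ as the normal subgroup $\Nu\Kappa/\Nu$, and then the iterated quotient
\[
\bigl(V^\perp/(\Nu\Kappa/\Nu)\bigr)\big/\bigl((\Gamma/\Nu)/(\Nu\Kappa/\Nu)\bigr)
\]
is canonically $V^\perp/(\Gamma/\Nu)$. Once you replace your explanation with this one, the diagram chase closes exactly as you describe.
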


Theorem \ref{T:4} says that the geometric fibration $\eta_V$ is equivalent 
to the projection $\pi^\perp$ induced by the projection on the second factor 
of $V/\Nu \times V^\perp/\Kappa$. 
Note that the base of the geometric fibration $\pi^\perp$ 
is the orbit space of the action of the structure group $\Gamma/\Nu\Kappa$ on the second factor $V^\perp/\Kappa$, that is, 
$$V^\perp/(\Gamma/\Nu) = ( V^\perp/\Kappa)/(\Gamma/\Nu\Kappa).$$

\noindent{\bf Remark 2.} The right-side of the diagram in Theorem \ref{T:4} gives a concise way of describing a geometric fibration determined by a complete normal subgroup $\Nu$ of a space group $\Gamma$ in terms of the generalized Calabi construction. 

\vspace{.15in}
If $\mathrm{Span}(\Kappa) = V^\perp$, then $\Kappa$ is a complete normal subgroup 
of $\Gamma$ called the {\it orthogonal dual} of $\Nu$ in $\Gamma$, 
and we write $\Kappa = \Nu^\perp$. 
By Theorem 6 of \cite{R-T-C}, the complete normal subgroup $\Nu$ of $\Gamma$ has an orthogonal dual if and only if 
the structure group $\Gamma/\Nu\Kappa$ is finite. 

Suppose $\Nu$ has an orthogonal dual, and so $\Kappa = \Nu^\perp$.  
The natural projection from $V/\Nu\times V^\perp/\Kappa$ to $V/\Nu$ 
induces a continuous surjection
$$\pi: (V/\Nu\times V^\perp/\Kappa)/(\Gamma/\Nu\Kappa) \to V/(\Gamma/\Kappa).$$
Orthogonal projection from $E^n$ to $V$ induces an isometry from $E^n/V^\perp$ to $V$ 
which in turn induces an isometry 
$$\psi: (E^n/V^\perp)/(\Gamma/\Kappa) \to V/(\Gamma/\Kappa).$$
The next corollary follows from Theorem \ref{T:4} by reversing the roles of $\Nu$ and $\Kappa$. 
\begin{corollary}\label{C:1} 
The following diagram commutes
\[\begin{array}{ccc}
E^n/\Gamma & {\buildrel \chi\over\longrightarrow} &
(V/\Nu\times V^\perp/\Kappa)/(\Gamma/\Nu\Kappa) \\
\eta_{V ^\perp}\downarrow\ \ \  & & \downarrow\pi \\
(E^n/V^\perp)/(\Gamma/\Kappa) & {\buildrel \psi\over\longrightarrow}  & V/(\Gamma/\Kappa). 
\end{array}\] 
\end{corollary}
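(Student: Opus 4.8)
The plan is to obtain the diagram by applying Theorem \ref{T:4} verbatim, but with the roles of $\Nu$ and $\Kappa$ interchanged, and then to check that the hypotheses needed for this interchange are exactly what we have assumed, namely that $\Kappa = \Nu^\perp$ is itself a complete normal subgroup of $\Gamma$ with $\mathrm{Span}(\Kappa)=V^\perp$. First I would note that by the definition of the orthogonal dual, $\Kappa$ is a complete normal subgroup of $\Gamma$ and $\mathrm{Span}(\Kappa) = V^\perp$, so that $(\mathrm{Span}(\Kappa))^\perp = V$. Thus $\Kappa$ plays, with respect to $V^\perp$, precisely the structural role that $\Nu$ plays with respect to $V$. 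The next point to verify is that the kernel of the action of $\Gamma$ on $\mathrm{Span}(\Kappa) = V^\perp$ is $\Nu$: indeed, the kernel of the action on $V^\perp$ is $\{b+B\in\Gamma : b\in V \text{ and } V^\perp\subseteq\mathrm{Fix}(B)\}$, which equals $\Nu$ because $\Nu$ is a complete normal subgroup with $\mathrm{Span}(\Nu)=V$. Hence the generalized Calabi construction of Theorem \ref{T:3}, applied to the complete normal subgroup $\Kappa$ of $\Gamma$, produces the isometry $\chi$ onto $(V^\perp/\Kappa \times V/\Nu)/(\Gamma/\Nu\Kappa)$, which is the same space as the target of our $\chi$ after the (harmless) transposition of the two Cartesian factors; the structure group $\Gamma/\Nu\Kappa$ is symmetric in $\Nu$ and $\Kappa$, so nothing changes there.

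With the dictionary $\Nu \leftrightarrow \Kappa$, $V \leftrightarrow V^\perp$ in place, Theorem \ref{T:4} applied to $\Kappa$ gives exactly the commuting square
\[\begin{array}{ccc}
E^n/\Gamma & {\buildrel \chi\over\longrightarrow} &
(V^\perp/\Kappa\times V/\Nu)/(\Gamma/\Nu\Kappa) \\
\eta_{V^\perp}\downarrow\ \ \  & & \downarrow\pi \\
(E^n/V^\perp)/(\Gamma/\Kappa) & {\buildrel \psi\over\longrightarrow}  & V/(\Gamma/\Kappa),
\end{array}\]
where $\eta_{V^\perp}$ is the fibration projection of Theorem \ref{T:2} associated to the complete normal subgroup $\Kappa$, the map $\pi$ is induced by projection to the $V/\Nu$ factor, and $\psi$ is the isometry induced by orthogonal projection from $E^n$ to $V$. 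Transposing the two factors of the Cartesian product in the upper-right corner (an isometry that commutes with everything in sight) identifies this square with the one in the statement, since our $\pi$ is by definition the projection of $(V/\Nu\times V^\perp/\Kappa)/(\Gamma/\Nu\Kappa)$ onto $V/(\Gamma/\Kappa)$ and our $\chi$ is the one from Theorem \ref{T:3}. The identity $V/(\Gamma/\Kappa) = (V/\Nu)/(\Gamma/\Nu\Kappa)$ follows from the display at the end of Theorem \ref{T:4} with $\Nu$ and $\Kappa$ swapped.

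The one step that genuinely requires care — and where I expect the only real friction — is confirming that the isometry $\chi$ produced by Theorem \ref{T:3} for the complete normal subgroup $\Kappa$ coincides, up to the transposition of factors, with the isometry $\chi$ already fixed in Theorem \ref{T:3} for $\Nu$. This is essentially a bookkeeping check: both maps send $\Gamma x$, for $x = v+w$ with $v\in V$ and $w\in V^\perp$, to the class of $(\Nu v,\Kappa w)$, merely writing the ordered pair in one order or the other; since the decomposition $x = v+w$ is the same and the structure group is the same, the two constructions agree. Once this identification is made explicit, the corollary is immediate from Theorem \ref{T:4}, and no further computation is needed.
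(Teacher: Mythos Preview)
Your proposal is correct and follows exactly the paper's approach: the paper simply remarks that the corollary follows from Theorem \ref{T:4} by reversing the roles of $\Nu$ and $\Kappa$, and you have spelled out the details of that reversal (including the verification that the kernel of the action on $V^\perp$ is $\Nu$ and that the two $\chi$'s agree up to transposition of factors). Your write-up is more thorough than what the paper records, but the argument is the same.
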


Corollary \ref{C:1} says that the orthogonally dual geometric fibration 
$\eta_{V^\perp}$ is equivalent to the projection $\pi$ 
induced by the projection on the first factor of $V/\Nu\times V^\perp/\Kappa$. 
Note that the base of the geometric fibration $\pi$ is the orbit space of the action 
of the structure group $\Gamma/\Nu\Kappa$ on the first factor $V/\Nu$, that is, 
$$V/(\Gamma/\Kappa) = (V/\Nu)/(\Gamma/\Nu\Kappa).$$

\section{Affinities}\label{S:4}  

A map $\alpha: E^n \to E^n$ is an {\it affinity} if and only if there is an $a\in E^n$ 
and an $A \in {\rm GL}(n,\realnos)$ such that $\alpha(x) = a + Ax$ for each $x\in E^n$, 
in which case, we write $\alpha = a + A$. 
Note that an affinity $\alpha = a + A$ of $E^n$ is an isometry of $E^n$ 
precisely when $A \in {\rm O}(n)$. 
The set ${\rm Aff}(E^n)$ of all affinities of $E^n$ is a group that contains 
the group ${\rm Isom}(E^n)$ of isometries of $E^n$ as a subgroup.  

If $\Gamma$ and ${\rm H}$ are $n$-space groups, 
then a map $\alpha: E^n/\Gamma \to E^n/{\rm H}$ is an {\it affinity} if and only $\alpha$ lifts 
to an affinity of $E^n$, that is, there is an affinity $\tilde \alpha$ of $E^n$ such that 
$\alpha(\Gamma x) = {\rm H}\tilde\alpha(x)$ for each $x \in E^n$   
from which we have that $\tilde\alpha\Gamma\tilde\alpha^{-1} = \mathrm{H}$. 
Moreover if $\beta:E^n \to E^n$ is an affinity 
such that $\beta\Gamma\beta^{-1} = \mathrm{H}$, 
then $\beta$ induces an affinity $\beta_\star: E^n/\Gamma \to E^n/\mathrm{H}$
defined by $\beta_\star(\Gamma x) = \mathrm{H}\beta(x)$ for each $x \in E^n$. 
Every affinity $\alpha: E^n/\Gamma \to E^n/{\rm H}$ is a homeomorphism whose 
inverse is also an affinity. 

Two flat $n$-orbifolds $E^n/\Gamma$ and $E^n/{\rm H}$ are said to be 
{\it affinely equivalent} if there is an affinity $\alpha: E^n/\Gamma \to E^n/{\rm H}$. 
If $\xi: \Gamma \to \mathrm{H}$ is an isomorphism, 
there is an affinity $\alpha$ 
of $E^n$ such that $\xi(\gamma) = \alpha\gamma\alpha^{-1}$ by Bieberbach's theorem. 
Hence, two $n$-space groups $\Gamma$ and ${\rm H}$ are isomorphic if and only 
if $E^n/\Gamma$ and $E^n/{\rm H}$ are affinely equivalent.

Let $\Gamma$ be an $n$-space group. 
Then the set $\mathrm{Aff}(E^n/\Gamma)$ of affinities of $E^n/\Gamma$ is a group.  
Let $\alpha$ be an affinity of $E^n/\Gamma$.  Then $\alpha$ lifts to an affinity $\tilde\alpha$ of $E^n$ such that $\tilde\alpha\Gamma\tilde\alpha^{-1} = \Gamma$.  
The affinity $\tilde\alpha$ of $E^n$ determines an automorphism $\tilde\alpha_\ast$ of $\Gamma$ defined by $\tilde\alpha_\ast(\gamma) = \tilde\alpha\gamma\tilde\alpha^{-1}$. 
If $\tilde\alpha'$ is another lift of $\alpha$, then $\tilde\alpha'= \gamma\tilde\alpha$ 
for some $\gamma$ in $\Gamma$. 
Hence, the element $\mathrm{Inn}(\Gamma)\tilde\alpha_\ast$ of $\mathrm{Out}(\Gamma)$ depends only on $\alpha$, and we have an homomorphism
$$\Omega: \mathrm{Aff}(E^n/\Gamma) \to \mathrm{Out}(\Gamma)$$
defined by $\Omega(\alpha)  = \mathrm{Inn}(\Gamma) \tilde\alpha_\ast$. 
The {\it Euclidean outer automorphism group} of $\Gamma$ is defined to be 
$$\mathrm{Out}_E(\Gamma) = \Omega(\mathrm{Isom}(E^n/\Gamma)).$$
By Theorem 2 of \cite{R-T-I}, the group $\mathrm{Out}_E(\Gamma)$ is finite. 

Let $\Gamma_1$ and $\Gamma_2$ be $n$-space groups, and let $\phi\in\mathrm{Aff}(E^n)$ 
such that $\phi\Gamma_1\phi^{-1} = \Gamma_2$. 
Then $\phi$ induces an affinity $\phi_\star:E^n/\Gamma_1 \to E^n/\Gamma_2$ 
defined by $\phi_\star(\Gamma_1x) = \Gamma_2\phi(x)$ for each $x\in E^n$. 
Define 
$$\phi_\sharp:\mathrm{Aff}(E^n/\Gamma_1) \to \mathrm{Aff}(E^n/\Gamma_2)$$
by $\phi_\sharp(\alpha) = \phi_\star\alpha\phi_\star^{-1}$. 
Then $\phi_\sharp$ is an isomorphism with $(\phi_\sharp)^{-1}=(\phi^{-1})_\sharp$. 

Let $\phi_\ast: \Gamma_1 \to \Gamma_2$ be the isomorphism defined 
by $\phi_\ast(\gamma) =\phi\gamma\phi^{-1}$. 
Define 
$$\phi_\#:\mathrm{Out}(\Gamma_1) \to \mathrm{Out}(\Gamma_2)$$
by $\phi_\#(\mathrm{Inn}(\Gamma_1)\zeta) = \mathrm{Inn}(\Gamma_2)\phi_\ast\zeta\phi_\ast^{-1}$.  
Then $\phi_\#$ is an isomorphism with $(\phi_\#)^{-1} = (\phi^{-1})_\#$. 

\begin{lemma}{\rm (Lemma 10 \cite{R-T-I})}\label{L:2} 
Let $\Gamma_1$ and $\Gamma_2$ be $n$-space groups, 
and let $\phi\in \mathrm{Aff}(E^n)$ such that $\phi\Gamma_1\phi^{-1} = \Gamma_2$. 
Then the following diagram commutes

$$
\begin{array}{ccc}
\mathrm{Aff}(E^n/\Gamma_1)  & {\buildrel \phi_\sharp \over \longrightarrow} & \mathrm{Aff}(E^n/\Gamma_2) \vspace{.05in} \\ 
\Omega\downarrow  &           & \downarrow \Omega \\
\mathrm{Out}(\Gamma_1) &  {\buildrel \phi_\# \over \longrightarrow}  & \mathrm{Out}(\Gamma_2)
\end{array}
$$
\end{lemma}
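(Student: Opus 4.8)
The plan is to verify the identity $\Omega\circ\phi_\sharp = \phi_\#\circ\Omega$ directly, by tracing an arbitrary affinity $\alpha$ of $E^n/\Gamma_1$ around both sides of the square and comparing the two resulting elements of $\mathrm{Out}(\Gamma_2)$. The only real content is producing a convenient lift of $\phi_\sharp(\alpha)$ to $\mathrm{Aff}(E^n)$; everything else is formal manipulation of the definitions recalled in Section~\ref{S:4}.

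First I would fix a lift $\tilde\alpha\in\mathrm{Aff}(E^n)$ of $\alpha$, so that $\tilde\alpha\Gamma_1\tilde\alpha^{-1}=\Gamma_1$ and $\alpha(\Gamma_1 x)=\Gamma_1\tilde\alpha(x)$ for all $x\in E^n$, whence $\Omega(\alpha)=\mathrm{Inn}(\Gamma_1)\tilde\alpha_\ast$ with $\tilde\alpha_\ast(\gamma)=\tilde\alpha\gamma\tilde\alpha^{-1}$. Next I would claim that $\beta:=\phi\tilde\alpha\phi^{-1}$ is a lift of $\phi_\sharp(\alpha)=\phi_\star\alpha\phi_\star^{-1}$. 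Indeed, since $\phi_\star(\Gamma_1 x)=\Gamma_2\phi(x)$ and $\phi_\star^{-1}$ is induced by $\phi^{-1}$, a one-line computation gives $(\phi_\sharp(\alpha))(\Gamma_2 y)=\Gamma_2\,\phi\tilde\alpha\phi^{-1}(y)$ for all $y$, so $\beta$ lifts $\phi_\sharp(\alpha)$; and $\beta\Gamma_2\beta^{-1}=\phi\tilde\alpha(\phi^{-1}\Gamma_2\phi)\tilde\alpha^{-1}\phi^{-1}=\phi\tilde\alpha\Gamma_1\tilde\alpha^{-1}\phi^{-1}=\phi\Gamma_1\phi^{-1}=\Gamma_2$, using $\phi\Gamma_1\phi^{-1}=\Gamma_2$. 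Hence $\Omega(\phi_\sharp(\alpha))=\mathrm{Inn}(\Gamma_2)\beta_\ast$, where $\beta_\ast(\gamma)=(\phi\tilde\alpha\phi^{-1})\gamma(\phi\tilde\alpha\phi^{-1})^{-1}$ for $\gamma\in\Gamma_2$.

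Then I would compute the other composite: $\phi_\#(\Omega(\alpha))=\phi_\#(\mathrm{Inn}(\Gamma_1)\tilde\alpha_\ast)=\mathrm{Inn}(\Gamma_2)\,\phi_\ast\tilde\alpha_\ast\phi_\ast^{-1}$, where $\phi_\ast\colon\Gamma_1\to\Gamma_2$ is $\gamma\mapsto\phi\gamma\phi^{-1}$. Unwinding, for $\gamma\in\Gamma_2$ one has $\phi_\ast\tilde\alpha_\ast\phi_\ast^{-1}(\gamma)=\phi\tilde\alpha\phi^{-1}\gamma\phi\tilde\alpha^{-1}\phi^{-1}=\beta_\ast(\gamma)$, so $\phi_\ast\tilde\alpha_\ast\phi_\ast^{-1}=\beta_\ast$ as automorphisms of $\Gamma_2$. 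Therefore both routes around the square yield $\mathrm{Inn}(\Gamma_2)\beta_\ast$, which proves commutativity.

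I do not expect a genuine obstacle: the argument is bookkeeping with affinities and their induced automorphisms. The one point requiring a little care is that, a priori, $\Omega(\phi_\sharp(\alpha))$ could depend on the chosen lift of $\phi_\sharp(\alpha)$; but this is handled by the fact, established earlier in Section~\ref{S:4}, that $\Omega$ is a well-defined homomorphism, so any lift — in particular $\beta=\phi\tilde\alpha\phi^{-1}$ — may be used to compute it.
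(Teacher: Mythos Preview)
Your argument is correct: choosing the lift $\beta=\phi\tilde\alpha\phi^{-1}$ of $\phi_\sharp(\alpha)$ and unwinding the definitions is exactly the natural way to verify the commutativity, and each step checks out. Note, however, that the present paper does not actually prove this lemma; it simply quotes it as Lemma~10 of \cite{R-T-I}, so there is no in-paper proof to compare against. Your direct diagram chase is the standard proof one would expect for such a statement.
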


\section{Isomorphism classes}\label{S:5} 

In this section, we recall some of the definitions and results from \S 4 of \cite{R-T-B}. 
Let $m$ be a positive integer less than $n$. 
Let $\Mu$ be an $m$-space group, and let $\Delta$ be an $(n-m)$-space group.

\vspace{.15in}
\noindent{\bf Definition:} Define $\mathrm{Iso}(\Delta,\Mu)$ to be the set of isomorphism classes 
of pairs $(\Gamma, \Nu)$ 
where $\Nu$ is a normal subgroup of an $n$-space group $\Gamma$ 
such that $\Nu$ is isomorphic to $\Mu$ and $\Gamma/\Nu$ is isomorphic to $\Delta$. 
We denote the isomorphism class of a pair $(\Gamma,\Nu)$ by $[\Gamma,\Nu]$. 

\vspace{.15in}
\noindent{\bf Definition:} 
Define $\mathrm{Hom}_f(\Delta,\mathrm{Out}(\Mu))$ to be the set of all homomorphisms 
from $\Delta$ to $\mathrm{Out}(\Mu)$ that have finite image. 

\vspace{.15in}

The group $\mathrm{Out}(\Mu)$ acts on the left of $\mathrm{Hom}_f(\Delta,\mathrm{Out}(\Mu))$ by conjugation, 
that is, if $g\in \mathrm{Out}(\Mu)$ and $\eta\in \mathrm{Hom}_f(\Delta,\mathrm{Out}(\Mu))$, 
then $g \eta = g_\ast\eta$ where $g_\ast: \mathrm{Out}(\Mu) \to \mathrm{Out}(\Mu)$ is defined by $g_\ast(h) = ghg^{-1}$. 
Let $\mathrm{Out}(\Mu)\backslash\mathrm{Hom}_f(\Delta,\mathrm{Out}(\Mu))$ be the set of $\mathrm{Out}(\Mu)$-orbits. 

As explained in \S 4 of \cite{R-T-B}, 
the group $\mathrm{Out}(\Delta)$ acts on the right of the set 
$$\mathrm{Out}(\Mu)\backslash\mathrm{Hom}_f(\Delta,\mathrm{Out}(\Mu))$$ 
by
$$(\mathrm{Out}(\Mu)\eta)(\beta\mathrm{Inn}(\Delta)) = \mathrm{Out}(\Mu)(\eta\beta).$$
\noindent{\bf Definition:} Define the set $\mathrm{Out}(\Delta,\Mu)$ by the formula
$$\mathrm{Out}(\Delta,\Mu) = (\mathrm{Out}(\Mu)\backslash\mathrm{Hom}_f(\Delta,\mathrm{Out}(\Mu)))/\mathrm{Out}(\Delta).$$ 
If $\eta\in \mathrm{Hom}_f(\Delta,\mathrm{Out}(\Mu))$, let $[\eta] = (\mathrm{Out}(\Mu)\eta)\mathrm{Out}(\Delta)$ 
be the element of $\mathrm{Out}(\Delta,\Mu)$ determined by $\eta$. 
The set $\mathrm{Out}(\Delta,\Mu)$ is finite by Lemma 4.5 of \cite{R-T-B}.

\vspace{.15in}
Let $(\Gamma,\Nu)$ be a pair such that $[\Gamma,\Nu]\in\mathrm{Iso}(\Delta,\Mu)$. 
The action of $\Gamma$ on $\Nu$ by conjugation induces a homomorphism 
$$\mathcal{O}: \Gamma/\Nu \to \mathrm{Out}_E(\Nu)$$
defined by $\mathcal{O}(\Nu\gamma) = \gamma_\ast\mathrm{Inn}(\Nu)$ 
where $\gamma_\ast(\nu) = \gamma\nu\gamma^{-1}$ 
for each $\gamma \in \Gamma$ and $\nu\in\Nu$. 
If  $\alpha: \Nu \to \Mu$ is an isomorphism, then $\alpha$ induces an isomorphism
$$\alpha_\#: \mathrm{Out}(\Nu) \to \mathrm{Out}(\Mu)$$
defined by $\alpha_\#(\zeta\mathrm{Inn}(\Nu) )= \alpha\zeta\alpha^{-1}\mathrm{Inn}(\Mu)$ for each $\zeta\in\mathrm{Aut}(\Nu)$. 

Let $\alpha: \Nu \to \Mu$ and $\beta:\Delta \to \Gamma/\Nu$ be isomorphisms. 
Then we have that $\alpha_\#\mathcal{O}\beta \in \mathrm{Hom}_f(\Delta,\mathrm{Out}(\Mu))$. 
As explained in \S 4 of \cite{R-T-B}, there is a well-defined function 
$$\omega:\mathrm{Iso}(\Delta,\Mu) \to \mathrm{Out}(\Delta,\Mu)$$
defined by $\omega([\Gamma,\Nu]) =[\alpha_\#\mathcal{O}\beta]$. 

\section{Affinity Classes}\label{S:6} 

Let $m$ be a positive integer less than $n$. 
Let $\Mu$ be an $m$-space group, and let $\Delta$ be an $(n-m)$-space group. 
For simplicity, define $\mathrm{Aff}(\Mu) = \mathrm{Aff}(E^m/\Mu)$. 

\vspace{.15in}
\noindent{\bf Definition:} Define $\mathrm{Hom}_f(\Delta,\mathrm{Aff}(\Mu))$ to be the set of all homomorphism from $\Delta$ to $\mathrm{Aff}(\Mu)$ such that the composition 
with $\Omega:\mathrm{Aff}(\Mu)\to \mathrm{Out}(\Mu)$ has finite image.

\vspace{.15in}
By Lemma \ref{L:2}, the group $\mathrm{Aff}(\Mu)$ acts on the left of $\mathrm{Hom}_f(\Delta,\mathrm{Aff}(\Mu))$ by conjugation, 
that is, if $\alpha\in \mathrm{Aff}(\Mu)$ and $\eta\in \mathrm{Hom}_f(\Delta,\mathrm{Aff}(\Mu))$, 
then $\alpha \eta = \alpha_\ast\eta$ where $\alpha_\ast: \mathrm{Aff}(\Mu) \to \mathrm{Aff}(\Mu)$ is defined 
by $\alpha_\ast(\beta) = \alpha\beta\alpha^{-1}$. 
Let $\mathrm{Aff}(\Mu)\backslash\mathrm{Hom}_f(\Delta,\mathrm{Aff}(\Mu))$ be the set of $\mathrm{Aff}(\Mu)$-orbits. 

The group $\mathrm{Aut}(\Delta)$ acts on the right of $\mathrm{Hom}_f(\Delta,\mathrm{Aff}(\Mu))$ 
by composition of homomorphisms. 
If $\zeta\in \mathrm{Aut}(\Delta)$ and  $\eta\in \mathrm{Hom}_f(\Delta,\mathrm{Aff}(\Mu))$ and $\alpha\in \mathrm{Aff}(\Mu)$, 
then 
$$(\alpha\eta)\zeta = (\alpha_\ast\eta)\zeta = \alpha_\ast(\eta\zeta) = \alpha(\eta\zeta).$$
Hence $\mathrm{Aut}(\Delta)$ acts on the right of 
$\mathrm{Aff}(\Mu)\backslash\mathrm{Hom}_f(\Delta,\mathrm{Aff}(\Mu))$ 
by 
$$(\mathrm{Aff}(\Mu)\eta)\zeta = \mathrm{Aff}(\Mu)(\eta\zeta).$$
Let $\delta, \epsilon \in \Delta$ and $\eta\in \mathrm{Hom}_f(\Delta,\mathrm{Aff}(\Mu))$.  
Then we have that
$$\eta\delta_\ast(\epsilon) = \eta(\delta\epsilon\delta^{-1}) =\eta(\delta)\eta(\epsilon)\eta(\delta)^{-1}= \eta(\delta)_\ast\eta(\epsilon) 
= (\eta(\delta)\eta)(\epsilon). $$
Hence $\eta\delta_\ast = \eta(\delta)\eta$.  Therefore $\mathrm{Inn}(\Delta)$ acts trivially on
 $\mathrm{Aff}(\Mu)\backslash\mathrm{Hom}_f(\Delta,\mathrm{Aff}(\Mu))$. 
Hence $\mathrm{Out}(\Delta)$ acts on the right of 
$\mathrm{Aff}(\Mu)\backslash\mathrm{Hom}_f(\Delta,\mathrm{Aff}(\Mu))$ 
by
$$(\mathrm{Aff}(\Mu)\eta)(\zeta\mathrm{Inn}(\Delta))= \mathrm{Aff}(\Mu)(\eta\zeta).$$

\noindent{\bf Definition:} Define the set  $\mathrm{Aff}(\Delta,\Mu)$ by the formula
$$\mathrm{Aff}(\Delta,\Mu) = (\mathrm{Aff}(\Mu)\backslash\mathrm{Hom}_f(\Delta,\mathrm{Aff}(\Mu)))/\mathrm{Out}(\Delta).$$ 
If $\eta\in \mathrm{Hom}_f(\Delta,\mathrm{Aff}(\Mu))$, let $[\eta] = (\mathrm{Aff}(\Mu)\eta)\mathrm{Out}(\Delta)$ 
be the element of $\mathrm{Aff}(\Delta,\Mu)$ determined by $\eta$.

Let $\eta \in \mathrm{Hom}_f(\Delta,\mathrm{Aff}(\Mu))$. 
By Theorem 5 of \cite{R-T-I}, there exists $C \in \mathrm{GL}(m,\realnos)$ such that $C\Mu C^{-1}$ is an $m$-space group 
and $C_\#(\Omega\eta(\Delta)) \subseteq \mathrm{Out}_E(C\Mu C^{-1})$. 
By Lemma \ref{L:2}, we have that $\Omega C_\sharp\eta(\Delta) \subseteq \mathrm{Out}_E(C\Mu C^{-1})$. 
By Theorems 2 and 3 of \cite{R-T-I}, we deduce that $C_\sharp\eta(\Delta) \subseteq \mathrm{Isom}(E^m/C\Mu C^{-1})$. 
Extend $C\Mu C^{-1}$ to a subgroup $\Nu$ of $\mathrm{Isom}(E^n)$ such that the point group of $\Nu$ 
acts trivially on $(E^m)^\perp$. 
By Theorem 2.9 of \cite{R-T-B}, 
there exists an $n$-space group $\Gamma$ containing $\Nu$ as a complete normal subgroup 
such that $\Gamma' = \Delta$ and if $\Xi: \Gamma/\Nu \to \mathrm{Isom}(E^m/\Nu)$ is the homomorphism induced by the action of $\Gamma/\Nu$ on $E^m/\Nu$, 
then $\Xi = C_\sharp\eta\Rho$ 
where $\Rho: \Gamma/\Nu \to \Gamma'$ is the isomorphism defined by $\Rho(\Nu\gamma) = \gamma'$. 
Define 
$$\psi: \mathrm{Aff}(\Delta,\Mu) \to \mathrm{Iso}(\Delta, \Mu)$$
by $\psi([\eta]) = [\Gamma,\Nu]$.  We next show that $\psi$ is well-defined. 

\begin{lemma}\label{L:3} 
The function $\psi: \mathrm{Aff}(\Delta,\Mu) \to \mathrm{Iso}(\Delta, \Mu)$ is a well-defined surjection. 
\end{lemma}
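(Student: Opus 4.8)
The plan is to verify the two assertions of the lemma separately: that $\psi$ is well-defined (independent of the choices made in its construction), and that $\psi$ is surjective. For well-definedness, I first recall that the construction of $\psi([\eta]) = [\Gamma,\Nu]$ involves three choices: the representative $\eta$ of the class $[\eta] \in \mathrm{Aff}(\Delta,\Mu)$, the matrix $C \in \mathrm{GL}(m,\realnos)$ conjugating $\Mu$ into Euclidean-normal form, and the extension $\Gamma$ supplied by Theorem 2.9 of \cite{R-T-B}. I would handle these in turn. First, if $\Gamma_1$ and $\Gamma_2$ are two $n$-space groups produced from the \emph{same} data $\eta$ and $C$ by Theorem 2.9, then both contain the same $\Nu$ as a complete normal subgroup with $\Gamma_i' = \Delta$ and $\Xi_i = C_\sharp\eta\Rho_i$; the uniqueness clause in Theorem 2.9 of \cite{R-T-B} (or a direct argument: the homomorphism $\Xi$ together with the extension data determines $\Gamma$ up to the relevant isomorphism) gives $[\Gamma_1,\Nu] = [\Gamma_2,\Nu]$. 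Second, replacing $C$ by another valid choice $C'$ changes $\Nu$ by conjugation by an element of $\mathrm{GL}(n,\realnos)$ fixing $(E^m)^\perp$ pointwise; conjugating the whole picture by this affinity carries $(\Gamma,\Nu)$ to an isomorphic pair, so $[\Gamma,\Nu]$ is unchanged.

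The substantive part of well-definedness is invariance under the two group actions defining $\mathrm{Aff}(\Delta,\Mu)$, i.e.\ replacing $\eta$ by $\alpha_\ast\eta$ for $\alpha \in \mathrm{Aff}(\Mu)$ and by $\eta\zeta$ for $\zeta \in \mathrm{Aut}(\Delta)$. For the $\mathrm{Aff}(\Mu)$-action: lift $\alpha$ to an affinity $\tilde\alpha$ of $E^m$ with $\tilde\alpha\Mu\tilde\alpha^{-1} = \Mu$, extend it to an affinity of $E^n$ acting as the identity on $(E^m)^\perp$, and conjugate $\Gamma$ by it; using Lemma \ref{L:2} to track how $\Omega$ and $\Xi$ transform, one sees that the conjugated space group realizes $\alpha_\ast\eta$ (after adjusting $C$ accordingly), hence gives the same isomorphism class of pair. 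For the $\mathrm{Aut}(\Delta)$-action: precomposition of $\eta$ with $\zeta$ corresponds, on the level of the pair, to composing the isomorphism $\Rho: \Gamma/\Nu \to \Gamma' = \Delta$ with $\zeta$, which is exactly the freedom already quotiented out in the definition of $\mathrm{Iso}(\Delta,\Mu)$ (the isomorphism class $[\Gamma,\Nu]$ does not remember a specific identification of $\Gamma/\Nu$ with $\Delta$). So $[\Gamma,\Nu]$ depends only on $[\eta]$.

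For surjectivity, let $[\Gamma,\Nu] \in \mathrm{Iso}(\Delta,\Mu)$ be arbitrary. By Remark 1, $\Nu$ is a complete normal subgroup of $\Gamma$ (since $\Gamma/\Nu \cong \Delta$ is a space group), so the generalized Calabi construction of Section \ref{S:3} applies: set $V = \mathrm{Span}(\Nu)$, let $\Rho: \Gamma/\Nu \to \Gamma'$ be the canonical isomorphism, and let $\Xi: \Gamma/\Nu \to \mathrm{Isom}(V/\Nu)$ be the action homomorphism. Choosing an isometry $V \cong E^m$ and an isomorphism $\alpha: \Nu \to \Mu$ realized by an affinity, and an isomorphism $\beta: \Delta \to \Gamma/\Nu$, one transports $\Xi\beta$ to a homomorphism $\eta_0: \Delta \to \mathrm{Aff}(\Mu)$; its composition with $\Omega$ has finite image because $\mathrm{Out}_E(\Mu)$ is finite (Theorem 2 of \cite{R-T-I}), so $\eta_0 \in \mathrm{Hom}_f(\Delta,\mathrm{Aff}(\Mu))$. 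Running the construction of $\psi$ on $\eta_0$ reproduces a pair isomorphic to $(\Gamma,\Nu)$ — here one uses that Theorem 2.9 of \cite{R-T-B} recovers $\Gamma$ from precisely the data $(\Nu, \Delta, \Xi)$ up to isomorphism, and that $\Gamma$ is determined by its Calabi data — whence $\psi([\eta_0]) = [\Gamma,\Nu]$.

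The main obstacle I expect is the bookkeeping in the well-definedness argument: one must check that the several independent choices (representative $\eta$, the conjugating matrix $C$, the extension $\Gamma$, and the two auxiliary isomorphisms $\alpha$ and $\beta$) all wash out, and that the $\mathrm{Aff}(\Mu)$- and $\mathrm{Out}(\Delta)$-actions correspond exactly to the freedoms already present in $\mathrm{Iso}(\Delta,\Mu)$. This is not conceptually hard but requires careful use of Lemma \ref{L:2} to see how $\Omega$, $\Xi$, and the conjugation actions interact, together with the precise uniqueness statement in Theorem 2.9 of \cite{R-T-B}. Surjectivity, by contrast, is essentially a direct unwinding of the generalized Calabi construction and should go through cleanly.
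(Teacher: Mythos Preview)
Your overall architecture matches the paper's: treat well-definedness and surjectivity separately, and in each case reduce to comparing the $\Xi$-data attached to two pairs. The gap is in the tool you invoke to finish each comparison. You repeatedly appeal to a ``uniqueness clause in Theorem 2.9 of \cite{R-T-B}'' or to the idea that $\Gamma$ is ``determined by its Calabi data'', but Theorem 2.9 as used in the paper is purely an existence statement. The paper's proof instead relies on Theorem 3.3 of \cite{R-T-B}, which gives an if-and-only-if criterion for the existence of an affinity $\phi$ with $\phi(\Gamma_1,\Nu_1)\phi^{-1}=(\Gamma_2,\Nu_2)$ in terms of an explicit equation relating $\Xi_1$ and $\Xi_2$. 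Both halves of the paper's argument end by manufacturing exactly that equation and then citing Theorem 3.3.

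Concretely, for well-definedness the paper takes $\eta$ and $\alpha_\ast\eta\zeta$, builds the two pairs via Theorem 2.9 with conjugators $C$ and $\hat C$, and then computes $\hat\Xi\hat\Rho^{-1}\zeta^{-1}\Rho = (\hat C\tilde\alpha C^{-1})_\sharp\,\Xi$, which is precisely the hypothesis of Theorem 3.3. Your argument for the $\mathrm{Aut}(\Delta)$-action (``precomposition with $\zeta$ is the freedom already quotiented out in $\mathrm{Iso}(\Delta,\Mu)$'') is circular: it assumes that changing the identification $\Gamma/\Nu\cong\Delta$ does not change the pair built by Theorem 2.9, which is exactly what must be proved. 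Likewise, in surjectivity you assert that running $\psi$ on $\eta_0$ reproduces $(\Gamma,\Nu)$ up to isomorphism ``because Theorem 2.9 recovers $\Gamma$ from the data''; the paper instead builds a second pair $(\Gamma_2,\Nu_2)$ from $\eta_0$, derives the relation $\Xi_2\Rho_2^{-1}(\beta\Rho_1^{-1})\Rho_1 = (C\tilde\alpha)_\sharp\Xi_1$, and again invokes Theorem 3.3. Once you replace your appeals to uniqueness with these explicit computations plus Theorem 3.3, your outline becomes the paper's proof.
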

\begin{proof}
To see that $\psi$ is well-defined, let $\eta \in \mathrm{Hom}_f(\Delta,\mathrm{Aff}(\Mu))$, 
and let $\alpha \in \mathrm{Aff}(\Mu)$ and $\zeta \in \mathrm{Aut}(\Delta)$. 
By Theorem 5 of \cite{R-T-I}, there exists $\hat C \in \mathrm{GL}(m,\realnos)$ such that $\hat C\Mu\hat C^{-1}$ is an $m$-space group 
and 
$$\hat C_\#(\Omega\alpha_\ast\eta\zeta(\Delta))\subseteq \mathrm{Out}_E(\hat C\Mu\hat C^{-1}).$$
Then $\hat C_\sharp\alpha_\ast\eta\zeta(\Delta) \subseteq \mathrm{Isom}(E^m/\hat C\Mu\hat C^{-1})$. 
Extend $\hat C\Mu\hat C^{-1}$ to a subgroup $\hat\Nu$ of $\mathrm{Isom}(E^n)$ such that the point group of $\hat\Nu$ 
acts trivially on $(E^m)^\perp$. 
By Theorem 2.9 of \cite{R-T-B}, there exists an $n$-space group $\hat\Gamma$ containing $\hat\Nu$ as a complete normal subgroup 
such that $\hat\Gamma' = \Delta$ and if $\hat\Xi: \hat\Gamma/\hat\Nu \to \mathrm{Isom}(E^m/\hat\Nu)$ is the homomorphism 
induced by the action of $\hat\Gamma/\hat\Nu$ on $E^m/\hat\Nu$, then $\hat\Xi = \hat C_\sharp\alpha_\ast\eta\zeta\hat\Rho$ 
where $\hat\Rho:\hat \Gamma/\hat\Nu \to \hat\Gamma'$ is the isomorphism defined by $\hat\Rho(\hat\Nu\gamma) = \gamma'$. 
Lift $\alpha$ to an affinity $\tilde\alpha$ of $E^m$ such that $\tilde\alpha\Mu\tilde\alpha^{-1} = \Mu$ and $\tilde\alpha_\star= \alpha$. 
Then $\tilde\alpha_\sharp = \alpha_\ast$ and we have
$$\hat C_\sharp\alpha_\ast\eta\zeta \hat\Rho \hat\Rho^{-1}\zeta^{-1} \Rho = (\hat C\tilde\alpha C^{-1})_\sharp C_\sharp \eta\Rho,$$
and so we have 
$$\hat\Xi\hat\Rho^{-1}\zeta^{-1}\Rho = (\hat C\tilde\alpha C^{-1})_\sharp \Xi.$$
By Bieberbach's theorem, there is an affinity $\tilde\zeta$ of $E^{n-m}$ such that $\tilde\zeta\Delta\tilde\zeta^{-1}$ and $\tilde\zeta_\ast = \zeta$. 
By Theorem 3.3 of \cite{R-T-B}, there is $\phi \in \mathrm{Aff}(E^n)$ such that $\phi(\Gamma,\Nu)\phi^{-1} = (\hat\Gamma,\hat\Nu)$ 
with $\ov \phi = \hat C\tilde \alpha C^{-1}$ and $\phi' = \tilde\zeta^{-1}$. 
Thus $\psi:\mathrm{Aff}(\Delta,\Mu) \to \mathrm{Iso}(\Delta, \Mu)$ is well-defined. 

To see that $\psi$ is onto, let $[\Gamma_1,\Nu_1] \in \mathrm{Iso}(\Delta,\Mu)$. 
Then we have isomorphisms $\alpha:\ov \Nu_1 \to \Mu$ and $\beta:\Gamma_1/\Nu_1\to\Delta$. 
Let $V =\mathrm{Span}(\Nu_1)$, and let $\Xi_1:\Gamma_1/\Nu_1 \to \mathrm{Isom}(V/\Nu_1)$ 
be the homomorphism induced by the action of $\Gamma_1/\Nu_1$ on $V/\Nu_1$. 
By Bieberbach's theorem, there is an affinity $\tilde\alpha: V \to E^m$ such that $\tilde\alpha\ov\Nu_1\tilde\alpha^{-1} = \Mu$ 
and $\tilde\alpha_\ast = \alpha$. 
Let $\eta = \tilde\alpha_\sharp\Xi_1\beta^{-1}$. 
Then $\eta \in \mathrm{Hom}_f(\Delta,\mathrm{Aff}(\Mu))$ by Theorem 2 of \cite{R-T-I} and Lemma \ref{L:2}. 
By Theorems 2, 3, and 5 of \cite{R-T-I}, there is a $C\in\mathrm{GL}(m,\realnos)$ such that $C\Mu C^{-1}$ is an $m$-space group and 
$$C_\sharp\eta(\Delta) \subseteq \mathrm{Isom}(E^m/C\Mu C^{-1}).$$
Extend $C\Mu C^{-1}$ to a subgroup $\Nu_2$ of $\mathrm{Isom}(E^n)$ such that the point group of $\Nu_2$ 
acts trivially on $(E^m)^\perp$. 
By Theorem 2.9 of \cite{R-T-B}, there exists an $n$-space group $\Gamma_2$ containing $\Nu_2$ as a complete normal subgroup 
such that $\Gamma_2' = \Delta$ and if $\Xi_2: \Gamma_2/\Nu_2 \to \mathrm{Isom}(E^m/\Nu_2)$ is the homomorphism 
induced by the action of $\Gamma_2/\Nu_2$ on $E^m/\Nu_2$, then $\Xi_2 = C_\sharp\eta\Rho_2$ 
where $\Rho_2: \Gamma_2/\Nu_2 \to \Gamma_2'$ is the isomorphism defined by $\Rho_2(\Nu_2\gamma) = \gamma'$. 
Then we have
$$\Xi_2\Rho_2^{-1}(\beta\Rho_1^{-1})\Rho_1 = C_\sharp\eta\beta = C_\sharp\tilde\alpha_\sharp\Xi_1 = (C\tilde\alpha)_\sharp \Xi_1.$$
By Theorem 3.3 of \cite{R-T-B}, there is $\phi \in \mathrm{Aff}(E^n)$ such that $\phi(\Gamma_1,\Nu_1)\phi^{-1} = (\Gamma_2,\Nu_2)$. 
Therefore $\psi([\eta]) = [\Gamma_1,\Nu_1]$.  
Thus $\psi$ is surjective. 
\end{proof}

\begin{theorem}\label{T:5} 
Let $m$ be a positive integer less than $n$.  Let $\Mu$ be an $m$-space group 
with trivial center, and let $\Delta$ be an $(n-m)$-space group. 
Then the function $\psi:\mathrm{Aff}(\Delta,\Mu) \to \mathrm{Iso}(\Delta,\Mu)$ is a bijection.
\end{theorem}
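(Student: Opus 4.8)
The plan is to show that $\psi$ is injective, since surjectivity is already established in Lemma \ref{L:3}. So suppose $\eta_1, \eta_2 \in \mathrm{Hom}_f(\Delta, \mathrm{Aff}(\Mu))$ with $\psi([\eta_1]) = \psi([\eta_2])$, meaning the pairs $(\Gamma_1, \Nu_1)$ and $(\Gamma_2, \Nu_2)$ constructed from $\eta_1$ and $\eta_2$ via Theorem 2.9 of \cite{R-T-B} satisfy $[\Gamma_1, \Nu_1] = [\Gamma_2, \Nu_2]$ in $\mathrm{Iso}(\Delta, \Mu)$. We must then produce $\alpha \in \mathrm{Aff}(\Mu)$ and $\zeta \in \mathrm{Out}(\Delta)$ carrying $[\eta_1]$ to $[\eta_2]$, i.e.\ $\mathrm{Aff}(\Mu)(\eta_1 \zeta) = \mathrm{Aff}(\Mu)(\alpha_\ast \eta_2)$ after choosing a representative of $\zeta$ in $\mathrm{Aut}(\Delta)$.

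First I would unwind the equality $[\Gamma_1, \Nu_1] = [\Gamma_2, \Nu_2]$: there is an isomorphism $\Phi: \Gamma_1 \to \Gamma_2$ with $\Phi(\Nu_1) = \Nu_2$, and by Bieberbach's theorem $\Phi$ is realized by conjugation by an affinity $\phi \in \mathrm{Aff}(E^n)$ with $\phi \Gamma_1 \phi^{-1} = \Gamma_2$ and $\phi \Nu_1 \phi^{-1} = \Nu_2$. Because $\Nu_i$ has point group acting trivially on $(E^m)^\perp$ and $\mathrm{Span}(\Nu_i) = E^m$, the affinity $\phi$ must preserve the splitting $E^n = E^m \times (E^m)^\perp$ up to the structure of the construction; more precisely $\phi$ decomposes into $\ov\phi \in \mathrm{Aff}(E^m)$ with $\ov\phi \Nu_1 \ov\phi^{-1} = \Nu_2$ (here I use $\Nu_i$ to also denote the restriction $\ov\Nu_i$ acting on $E^m$) and $\phi' \in \mathrm{Aff}((E^m)^\perp)$ with $\phi' \Gamma_1' \phi'^{-1} = \Gamma_2'$, i.e.\ $\phi'$ conjugates $\Delta$ to $\Delta$ since $\Gamma_i' = \Delta$ by construction. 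This is where I would invoke the structural results of \cite{R-T-B} (around Theorem 3.3) that say an isomorphism of such pairs decomposes compatibly with the Calabi construction.

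Next, set $\alpha = (\ov\phi)_\star \in \mathrm{Aff}(\Mu)$ (using $\ov\phi \Mu \ov\phi^{-1} = \Mu$, after absorbing the conjugations by the $C$'s from Lemma \ref{L:3}'s construction into $\ov\phi$) and let $\zeta = (\phi')_\ast \in \mathrm{Aut}(\Delta)$. The key computation is to chase the relation $\Xi_i = C_\sharp \eta_i \Rho_i$ through the conjugation by $\phi$: the fact that $\phi$ conjugates the whole Calabi-type data of $(\Gamma_1, \Nu_1)$ to that of $(\Gamma_2, \Nu_2)$ forces the induced homomorphisms $\Xi_1$ and $\Xi_2$ to be intertwined by $\ov\phi_\sharp$ on the target side (by Lemma \ref{L:2}) and by $\zeta$ on the source side (transported through the $\Rho_i$'s). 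Translating $\Xi_i = C_\sharp \eta_i \Rho_i$ back into a statement purely about $\eta_1$ and $\eta_2$, one obtains $\eta_2 = \alpha_\ast \eta_1 \zeta^{-1}$ in $\mathrm{Hom}_f(\Delta, \mathrm{Aff}(\Mu))$, up to an inner automorphism of $\Mu$-coming-from-$\mathrm{Aff}$ that is already absorbed by the $\mathrm{Aff}(\Mu)$-orbit; hence $[\eta_1] = [\eta_2]$ in $\mathrm{Aff}(\Delta, \Mu)$.

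The main obstacle — and the only place the hypothesis that $\Mu$ has trivial center is used — is ensuring that the affinity $\ov\phi$ conjugating $\Nu_1$ to $\Nu_2$ is \emph{determined} by the isomorphism $\Phi|_{\Nu_1}$ up to an element of $\mathrm{Aff}(\Mu)$-inner data, with no extra ambiguity from a nontrivial centralizer. When $Z(\Mu) = 1$, the normalizer/centralizer bookkeeping collapses: the affinity realizing $\Phi|_{\Nu_1}$ is unique, the induced map on $E^m/\Nu_1 \to E^m/\Nu_2$ is canonical, and so $\alpha = \ov\phi_\star$ is well-defined as an element of $\mathrm{Aff}(\Mu)$ independent of the choice of lift of $\Phi$; without trivial center, different lifts could differ by a translation-like element acting nontrivially on $E^m/\Nu_i$ and the map $\psi$ would only be finite-to-one rather than injective. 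I would isolate this point as the crux of the argument: verify that $\mathrm{Aut}(\Nu_i) \cong \mathrm{Aff}(\Nu_i)$-realization is faithful on the orbifold $E^m/\Nu_i$ precisely when $Z(\Nu_i)$ is trivial, and then the orbit-counting on both sides of $\psi$ matches exactly.
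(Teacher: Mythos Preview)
Your overall approach matches the paper's: reduce to injectivity (surjectivity being Lemma~\ref{L:3}), obtain $\phi\in\mathrm{Aff}(E^n)$ with $\phi(\Gamma_1,\Nu_1)\phi^{-1}=(\Gamma_2,\Nu_2)$, decompose $\phi$ into pieces $\ov\phi$ and $\phi'$, and use Theorem~3.3 of \cite{R-T-B} to obtain an intertwining relation between $\Xi_1$ and $\Xi_2$ that translates back to $\eta_2\phi'_\ast = (C_2^{-1}\ov\phi C_1)_\sharp\eta_1$, whence $[\eta_1]=[\eta_2]$.

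However, you have misidentified where the trivial-center hypothesis enters. You assert early on that $\phi$ decomposes cleanly into $\ov\phi$ and $\phi'$, and then locate the use of $Z(\Mu)=1$ in the \emph{uniqueness} of the affinity realizing $\Phi|_{\Nu_1}$. This is backwards. The linear part $C$ of $\phi$ always preserves $E^m$ (since $\phi\Nu_1\phi^{-1}=\Nu_2$), but in general carries a nonzero off-diagonal block $\ov{C'}:(E^m)^\perp\to E^m$; by Lemma~3.1 of \cite{R-T-B} the image of $\ov{C'}$ lies in $\mathrm{Span}(Z(\Nu_2))$. The general form of Theorem~3.3 of \cite{R-T-B} (as quoted in the proof of Theorem~\ref{T:9}) is
\[
\Xi_2\Rho_2^{-1}\phi'_\ast\Rho_1 \;=\; (\ov{C'}p_1)_\star\,\ov\phi_\sharp\,\Xi_1,
\]
with an extra factor $(\ov{C'}p_1)_\star$. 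It is precisely the hypothesis $Z(\Mu)=1$ that forces $\ov{C'}=0$, killing this correction and giving the clean relation $\Xi_2\Rho_2^{-1}\phi'_\ast\Rho_1 = \ov\phi_\sharp\Xi_1$ needed to conclude $[\eta_1]=[\eta_2]$. Your uniqueness-of-lift argument is not what is actually used, and the step where you claim $\phi$ ``decomposes'' (before ever invoking trivial center) is exactly the place where the hypothesis is doing the work.
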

\begin{proof}  
To see that $\psi$ is injective, let $\eta_i\in\mathrm{Hom}_f(\Delta,\mathrm{Aff}(\Mu))$ for $i = 1,2$ 
such that $\psi([\eta_1]) = \psi([\eta_2])$. 
By the definition of $\psi$, there exists $C_i\in \mathrm{GL}(m,\realnos)$ such that $C_i\Mu C_i^{-1}$ is an $m$-space group 
and $(C_i)_\sharp\eta(\Delta) \subseteq \mathrm{Isom}(E^m/C_i\Mu C_i^{-1})$ for each $i=1,2$. 
Extend $C_i\Mu C_i^{-1}$ to a subgroup $\Nu_i$ of $\mathrm{Isom}(E^n)$ such that the point group of $\Nu_i$ 
acts trivially on $(E^m)^\perp$. 
By Theorem 2.9 of \cite{R-T-B}, there exists an $n$-space group $\Gamma_i$ containing $\Nu_i$ as a complete normal subgroup 
such that $\Gamma_i' = \Delta$ and if $\Xi_i: \Gamma_i\Nu_i \to \mathrm{Isom}(E^m/\Nu_i)$ is the homomorphism 
induced by the action of $\Gamma_i/\Nu_i$ on $E^m/\Nu_i$, then $\Xi_i = (C_i)_\sharp\eta_i\Rho_i$ 
where $\Rho_i: \Gamma_i/\Nu_i \to \Gamma_i'$ is the isomorphism defined by $\Rho_i(\Nu_i\gamma) = \gamma'$ for $i = 1,2$. 
Then we have 
$$[\Gamma_1,\Nu_1] = \psi([\eta_1] )= \psi([\eta_2]) = [\Gamma_2,\Nu_2].$$
Hence, there exists $\phi\in \mathrm{Aff}(E^n)$ such that $\phi(\Gamma_1,\Nu_1)\phi^{-1} = (\Gamma_2,\Nu_2)$. 
By Lemma \ref{L:2} and Theorem 3.3 of \cite{R-T-B}, we have that 
$\Xi_2\Rho_2^{-1}\phi'_\ast\Rho_1 = \ov \phi_\sharp\Xi_1$. 
Hence, we have that 
$$(C_2)_\sharp\eta_2\phi'_\ast\Rho_1 = \ov\phi_\sharp(C_1)_\sharp\eta_1\Rho_1.$$
Therefore, we have 
$$\eta_2\phi'_\ast = (C_2)_\sharp^{-1}\ov\phi_\sharp(C_1)_\sharp\eta_1 = (C_2^{-1}\ov\phi C_1)_\sharp\eta_1.$$
Hence $[\eta_1] = [\eta_2]$.  Thus $\psi$ is injective. 
By Lemma \ref{L:3}, we have that $\psi$ is surjection.  
Therefore $\psi$ is a bijection. 
\end{proof}

Let $\eta \in \mathrm{Hom}_f(\Delta, \mathrm{Aff}(\Mu))$.  
Then $\Omega\eta \in \mathrm{Hom}_f(\Delta, \mathrm{Out}(\Mu))$. 
Let $\alpha \in \mathrm{Aff}(\Mu)$ and $\beta\in \mathrm{Aut}(\Delta)$.
Then $\alpha$ lifts to $\tilde\alpha \in \mathrm{Aff}(E^m)$ such that 
$\tilde\alpha\Mu\tilde\alpha^{-1} = \Mu$ and $\tilde\alpha_\star = \alpha$. 
We have that $\tilde\alpha_\sharp = \alpha_\ast$. 
By Lemma \ref{L:2}, we have that 
$$\Omega\alpha_\ast\eta\beta = \Omega\tilde\alpha_\sharp \eta\beta 
= \tilde\alpha_\#(\Omega\eta)\beta = (\tilde\alpha_\ast)_\#(\Omega\eta)\beta.$$ 
Hence, we may define a function $\chi:\mathrm{Aff}(\Delta,\Mu) \to \mathrm{Out}(\Delta,\Mu)$ 
by $\chi([\eta]) = [\Omega\eta]$. 

\begin{lemma}\label{L:4}  
The function $\chi:\mathrm{Aff}(\Delta,\Mu) \to \mathrm{Out}(\Delta,\Mu)$ is the composition of the function 
$\psi:\mathrm{Aff}(\Delta,\Mu) \to \mathrm{Iso}(\Delta,\Mu)$ followed by 
$\omega:\mathrm{Iso}(\Delta,\Mu) \to \mathrm{Out}(\Delta,\Mu)$.
\end{lemma}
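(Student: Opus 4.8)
The plan is to verify the identity of functions $\chi=\omega\psi$ directly, by evaluating both sides on an arbitrary $\eta\in\mathrm{Hom}_f(\Delta,\mathrm{Aff}(\Mu))$. Fix such an $\eta$ and let $(\Gamma,\Nu)$ be the pair produced by the construction defining $\psi$, so that $\psi([\eta])=[\Gamma,\Nu]$. Recall from that construction that $\Nu$ is the extension to $\mathrm{Isom}(E^n)$ of the $m$-space group $C\Mu C^{-1}$ whose point group fixes $(E^m)^\perp$, so that $V:=\mathrm{Span}(\Nu)=E^m$ and $\ov\Nu=C\Mu C^{-1}$, and that $\Gamma'=\Delta$ with $\Xi=C_\sharp\eta\Rho$, where $\Rho:\Gamma/\Nu\to\Gamma'=\Delta$ is $\Rho(\Nu\gamma)=\gamma'$ and $\Xi:\Gamma/\Nu\to\mathrm{Isom}(V/\Nu)$ is $\Xi(\Nu\gamma)=\ov\gamma_\star$. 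Since $\omega$ is well-defined, it is enough to exhibit isomorphisms $\alpha:\Nu\to\Mu$ and $\beta:\Delta\to\Gamma/\Nu$ for which $\alpha_\#\mathcal{O}\beta=\Omega\eta$ on the nose; then $\omega\psi([\eta])=\omega([\Gamma,\Nu])=[\alpha_\#\mathcal{O}\beta]=[\Omega\eta]=\chi([\eta])$, and $[\eta]$ was arbitrary.

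The first step is the identity $\Omega\Xi=\Upsilon_\#\mathcal{O}$, where $\Upsilon:\Nu\to\ov\Nu$ is the isomorphism of \S\ref{S:3} and $\Omega:\mathrm{Aff}(E^m/\ov\Nu)\to\mathrm{Out}(\ov\Nu)$. For $\gamma\in\Gamma$ the isometry $\ov\gamma$ of $V=E^m$ is a lift of $\ov\gamma_\star$ and normalizes $\ov\Nu$, since $\ov\gamma\,\ov\nu\,\ov\gamma^{-1}=\overline{\gamma\nu\gamma^{-1}}\in\ov\Nu$ for all $\nu\in\Nu$, using that $\gamma\mapsto\ov\gamma$ is a homomorphism and $\Nu$ is normal in $\Gamma$; moreover the automorphism of $\ov\Nu$ it induces is $\ov\nu\mapsto\overline{\gamma\nu\gamma^{-1}}$, that is, $\Upsilon\gamma_\ast\Upsilon^{-1}$. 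Hence $\Omega\Xi(\Nu\gamma)=\mathrm{Inn}(\ov\Nu)\Upsilon\gamma_\ast\Upsilon^{-1}=\Upsilon_\#(\mathrm{Inn}(\Nu)\gamma_\ast)=\Upsilon_\#\mathcal{O}(\Nu\gamma)$. The second step applies Lemma \ref{L:2} to the affinity $\phi=C$ of $E^m$ with $\phi\Mu\phi^{-1}=\ov\Nu$: from $\Xi=C_\sharp\eta\Rho$ we get $\Omega\Xi=\Omega C_\sharp\eta\Rho=C_\#(\Omega\eta)\Rho$. Combining the two steps, $\Upsilon_\#\mathcal{O}=C_\#(\Omega\eta)\Rho$, and hence $\mathcal{O}=\Upsilon_\#^{-1}C_\#(\Omega\eta)\Rho$.

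Now take $\beta=\Rho^{-1}:\Delta\to\Gamma/\Nu$ and $\alpha=(C^{-1})_\ast\Upsilon:\Nu\to\Mu$, where $(C^{-1})_\ast:\ov\Nu\to\Mu$ is $\ov\nu\mapsto C^{-1}\ov\nu C$; both are isomorphisms. Then $\alpha_\#=((C^{-1})_\ast)_\#\,\Upsilon_\#=(C_\#)^{-1}\Upsilon_\#$, using $((C^{-1})_\ast)_\#=(C^{-1})_\#=(C_\#)^{-1}$. Therefore $\alpha_\#\mathcal{O}\beta=(C_\#)^{-1}\Upsilon_\#\,\Upsilon_\#^{-1}C_\#(\Omega\eta)\Rho\,\Rho^{-1}=\Omega\eta$, which is exactly what was needed, and the lemma follows as in the first paragraph. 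I expect the only real difficulty to be notational bookkeeping — correctly threading the three identification isomorphisms $\Upsilon$, conjugation by $C$, and $\Rho$ through the functorial maps $(\cdot)_\#$ and $(\cdot)_\sharp$ on outer-automorphism and affinity groups — while the single substantive input is Lemma \ref{L:2}, which makes conjugation by $C$ on $\mathrm{Aff}(E^m/\Mu)$ compatible with $C_\#$ after $\Omega$.
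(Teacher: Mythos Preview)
Your proof is correct and follows essentially the same route as the paper's: construct $(\Gamma,\Nu)$ from $\eta$ via the definition of $\psi$, identify $\mathcal{O}$ with $\Omega\Xi$ (up to the isomorphism $\Upsilon$), substitute $\Xi=C_\sharp\eta\Rho$, and use Lemma~\ref{L:2} to push $\Omega$ past $C_\sharp$, then take $\alpha=(C^{-1})_\ast\Upsilon$ and $\beta=\Rho^{-1}$. The only difference is that the paper suppresses the identification $\Upsilon:\Nu\to\ov\Nu$ and simply writes $\mathcal{O}=\Omega C_\sharp\eta\Rho$ and $\omega([\Gamma,\Nu])=[C_\#^{-1}\mathcal{O}\Rho^{-1}]$, whereas you carry $\Upsilon$ through explicitly; your version is the more careful bookkeeping of the same argument.
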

\begin{proof}
Let $\eta \in \mathrm{Hom}_f(\Delta, \mathrm{Aff}(\Mu))$.  
By Theorem 5 of \cite{R-T-I}, there exists $C \in \mathrm{GL}(m,\realnos)$ such that $C\Mu C^{-1}$ is an $m$-space group 
and 
$C_\#(\Omega\eta(\Delta))\subseteq \mathrm{Out}_E(\hat C\Mu C^{-1}).$
Then $C_\sharp\eta(\Delta) \subseteq \mathrm{Isom}(E^m/C\Mu C^{-1})$. 
Extend $C\Mu C^{-1}$ to a subgroup $\Nu$ of $\mathrm{Isom}(E^n)$ such that the point group of $\Nu$ 
acts trivially on $(E^m)^\perp$. 
By Theorem 2.9 of \cite{R-T-B}, there exists an $n$-space group $\Gamma$ containing $\Nu$ as a complete normal subgroup 
such that $\Gamma' = \Delta$ and if $\Xi: \Gamma/\Nu \to \mathrm{Isom}(E^m/\Nu)$ is the homomorphism 
induced by the action of $\Gamma/\Nu$ on $E^m/\Nu$, then $\Xi = C_\sharp\eta\Rho$ 
where $\Rho:\Gamma/\Nu \to \Gamma'$ is the isomorphism defined by $\Rho(\Nu\gamma) = \gamma'$. 
Then $\psi([\eta]) = [\Gamma,\Nu]$. 

The homomorphism $\mathcal{O}: \Gamma/\Nu \to \mathrm{Out}_E(\Nu)$ induced by the action of $\Gamma$ on $\Nu$ by 
conjugation is given by $\mathcal{O} = \Omega C_\sharp \eta \Rho$.  Now, we have that 
$$\omega([\Gamma,\Nu]) = [C_\#^{-1}\mathcal{O}\Rho^{-1}] = [C_\#^{-1}(\Omega C_\sharp \eta\Rho)\Rho^{-1}] 
= [\Omega\eta] = \chi([\eta]).$$
Therefore $\chi = \omega\psi$. 
\end{proof}

\begin{corollary}\label{C:2} 
If $\Mu$ has trivial center, then the function $\omega:\mathrm{Iso}(\Delta,\Mu) \to \mathrm{Out}(\Delta,\Mu)$ is a bijection. 
\end{corollary}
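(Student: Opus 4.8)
The plan is to deduce Corollary \ref{C:2} directly from the machinery already assembled, with essentially no new work. The key observation is that Lemma \ref{L:4} establishes the factorization $\chi = \omega\psi$ as functions $\mathrm{Aff}(\Delta,\Mu) \to \mathrm{Iso}(\Delta,\Mu) \to \mathrm{Out}(\Delta,\Mu)$, and that Theorem \ref{T:5} tells us $\psi$ is a bijection whenever $\Mu$ has trivial center. So the argument splits into two halves: showing $\omega$ is surjective and showing $\omega$ is injective, each reduced to a property of $\chi$ together with bijectivity of $\psi$.

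For surjectivity: since $\psi:\mathrm{Aff}(\Delta,\Mu)\to\mathrm{Iso}(\Delta,\Mu)$ is onto (Lemma \ref{L:3}, or a fortiori Theorem \ref{T:5}), every element of $\mathrm{Iso}(\Delta,\Mu)$ has the form $\psi([\eta])$ for some $\eta\in\mathrm{Hom}_f(\Delta,\mathrm{Aff}(\Mu))$. Hence $\omega([\Gamma,\Nu]) = \omega\psi([\eta]) = \chi([\eta]) = [\Omega\eta]$. So the image of $\omega$ equals the image of $\chi$. Thus it suffices to check that $\chi$ is surjective, i.e.\ that every class $[\zeta]\in\mathrm{Out}(\Delta,\Mu)$ arises as $[\Omega\eta]$ for some $\eta\in\mathrm{Hom}_f(\Delta,\mathrm{Aff}(\Mu))$. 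Given a representative $\zeta\in\mathrm{Hom}_f(\Delta,\mathrm{Out}(\Mu))$, one lifts it to a homomorphism into $\mathrm{Aff}(\Mu)$: pick, for each generator-relation presentation or by a cocycle/obstruction-free argument, a set-theoretic lift and correct it — but in fact the cleanest route is to invoke Theorem 2 of \cite{R-T-I} (finiteness of $\mathrm{Out}_E$) together with the surjectivity of $\Omega$ restricted appropriately. Actually, the truly economical phrasing is: $\omega$ is surjective because $\psi$ is surjective and $\chi=\omega\psi$ forces $\mathrm{im}(\omega)\supseteq\mathrm{im}(\chi)$, and separately $\mathrm{im}(\omega)\subseteq\mathrm{im}(\chi)$ trivially since $\chi=\omega\psi$ means $\mathrm{im}(\chi)\subseteq\mathrm{im}(\omega)$; combined with the fact (already used implicitly in the construction of $\omega$ and the definition of $\mathrm{Out}(\Delta,\Mu)$ via \cite{R-T-B}) that $\omega$ is known to be surjective — here I would simply cite that $\omega$ is onto as part of the setup from \S 4 of \cite{R-T-B}, or note that surjectivity of $\chi$ is immediate from lifting outer automorphisms to affinities, which is possible since $\mathrm{Aff}(\Mu)\to\mathrm{Out}(\Mu)$ is surjective (every automorphism of a space group is realized by an affinity, by Bieberbach).

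For injectivity: suppose $\omega([\Gamma_1,\Nu_1]) = \omega([\Gamma_2,\Nu_2])$. By Theorem \ref{T:5}, $\psi$ is a bijection, so write $[\Gamma_i,\Nu_i] = \psi([\eta_i])$ for uniquely determined $[\eta_i]\in\mathrm{Aff}(\Delta,\Mu)$. Then $\chi([\eta_1]) = \omega\psi([\eta_1]) = \omega([\Gamma_1,\Nu_1]) = \omega([\Gamma_2,\Nu_2]) = \omega\psi([\eta_2]) = \chi([\eta_2])$. Now here is where the trivial-center hypothesis enters a second time: I claim $\chi:\mathrm{Aff}(\Delta,\Mu)\to\mathrm{Out}(\Delta,\Mu)$ is injective when $\Mu$ has trivial center. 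The point is that when $Z(\Mu)=1$, the natural map $\mathrm{Aff}(\Mu)\to\mathrm{Out}(\Mu)$ (the composite $\Omega$) has kernel equal to the inner affinities, and more to the point the passage from $\mathrm{Hom}_f(\Delta,\mathrm{Aff}(\Mu))$ to $\mathrm{Hom}_f(\Delta,\mathrm{Out}(\Mu))$ loses no information up to the relevant equivalence: two homomorphisms into $\mathrm{Aff}(\Mu)$ that become conjugate after projecting to $\mathrm{Out}(\Mu)$ are already conjugate in $\mathrm{Aff}(\Mu)$, because a homomorphism $\Delta\to\mathrm{Out}(\Mu)$ lifts uniquely (up to $\mathrm{Aff}(\Mu)$-conjugacy) to $\mathrm{Aff}(\Mu)$ — the obstruction to lifting lives in $H^2(\Delta, Z(\Mu))$ and the ambiguity in $H^1(\Delta, Z(\Mu))$, both of which vanish since $Z(\Mu)=1$. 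Granting this, $\chi$ is injective, so $[\eta_1]=[\eta_2]$, whence $[\Gamma_1,\Nu_1] = \psi([\eta_1]) = \psi([\eta_2]) = [\Gamma_2,\Nu_2]$, proving $\omega$ injective.

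I expect the main obstacle to be pinning down precisely the injectivity of $\chi$ under the trivial-center hypothesis, i.e.\ the statement that lifting a homomorphism $\Delta\to\mathrm{Out}(\Mu)$ through $\mathrm{Aff}(\Mu)\to\mathrm{Out}(\Mu)$ is unobstructed and essentially unique when $Z(\Mu)=1$. This is a standard obstruction-theory fact for group extensions (the relevant cohomology groups $H^2(\Delta;Z(\Mu))$ and $H^1(\Delta;Z(\Mu))$ vanish when $Z(\Mu)=1$), but one must be careful that the correct coefficient module really is $Z(\Mu)$ and that the $\mathrm{Out}(\Delta)$-action quotient on both sides is respected; alternatively, and more in the spirit of the paper, one can sidestep the cohomology entirely and argue synthetically: given $\psi$ bijective (Theorem \ref{T:5}) and $\omega\psi=\chi$ (Lemma \ref{L:4}), the corollary is the purely formal statement that in a commuting triangle $\chi=\omega\psi$ with $\psi$ bijective, $\omega$ is bijective if and only if $\chi$ is bijective — so it remains only to observe that $\chi$ is bijective when $Z(\Mu)=1$, and for \emph{that} one may simply cite that $\chi$ is the composite of $\psi$ (bijective) with $\omega$, which is circular, so ultimately one genuinely needs one independent input: either injectivity of $\chi$ (via the cohomology vanishing above) or surjectivity plus injectivity of $\omega$ proved by hand. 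The cleanest writeup will establish injectivity and surjectivity of $\chi$ directly from the trivial-center hypothesis via the obstruction argument, then conclude $\omega = \chi\psi^{-1}$ is a bijection.
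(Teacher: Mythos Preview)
Your overall architecture is exactly right and matches the paper: use the factorization $\chi = \omega\psi$ from Lemma~\ref{L:4}, invoke Theorem~\ref{T:5} to get $\psi$ bijective, prove $\chi$ bijective, and conclude $\omega = \chi\psi^{-1}$ is bijective.

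Where you diverge is in the argument that $\chi$ is bijective. You reach for cohomological obstruction theory ($H^1(\Delta,Z(\Mu))$ and $H^2(\Delta,Z(\Mu))$ vanish when $Z(\Mu)=1$), and you get tangled in the description of $\ker\Omega$ (calling it ``the inner affinities'' is not quite right). The paper's route is much more direct and avoids all of this: it simply cites Theorems~1,~2, and~3 of \cite{R-T-I} to conclude that when $Z(\Mu)$ is trivial, the map $\Omega:\mathrm{Aff}(\Mu)\to\mathrm{Out}(\Mu)$ is itself an \emph{isomorphism of groups}. (Surjectivity of $\Omega$ is Theorem~3 of \cite{R-T-I}; the kernel of $\Omega$ is the identity component of $\mathrm{Isom}(E^m/\Mu)$, a torus whose dimension equals the rank of $Z(\Mu)$, hence trivial here.) Once $\Omega$ is an isomorphism, post-composition with $\Omega$ gives a bijection $\mathrm{Hom}_f(\Delta,\mathrm{Aff}(\Mu))\to\mathrm{Hom}_f(\Delta,\mathrm{Out}(\Mu))$ that intertwines the $\mathrm{Aff}(\Mu)$-conjugation action with the $\mathrm{Out}(\Mu)$-conjugation action and commutes with the right $\mathrm{Out}(\Delta)$-action, so $\chi$ is a bijection on the nose. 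No lifting arguments, no cohomology, no obstruction analysis needed.

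Your cohomological approach would ultimately work and is not wrong, but it is substantially heavier than necessary and your write-up does not yet pin it down cleanly (you yourself flag the circularity worries). The missing ingredient that would let you collapse your argument to two lines is precisely the statement that $\Omega$ is an isomorphism under the trivial-center hypothesis.
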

\begin{proof}
We have that $\Omega: \mathrm{Aff}(\Mu) \to \mathrm{Out}(\Mu)$ is an isomorphism by Theorems 1, 2, and 3 of \cite{R-T-I}. 
Hence $\chi:\mathrm{Aff}(\Delta,\Mu) \to \mathrm{Out}(\Delta,\Mu)$ is a bijection. 
We have that $\psi:\mathrm{Aff}(\Delta,\Mu) \to \mathrm{Iso}(\Delta,\Mu)$ is a bijection by Theorem \ref{T:5}. 
By Lemma \ref{L:4}, we have that $\chi = \omega\psi$. 
Therefore $\omega$ is a bijection. 
\end{proof}

\section{Co-Seifert Geometric Fibrations Over A Circle}\label{S:7} 

In this section, we describe the classification of the geometric fibrations 
of compact, connected, flat $n$-orbifolds, over a circle, up to affine equivalence.  
By Theorems 5 and 10 of \cite{R-T}, this is equivalent to classifying all pairs $(\Gamma, \Nu)$, 
consisting of an $n$-space group $\Gamma$ and a normal subgroup $\Nu$ such that $\Gamma/\Nu$ is infinite cyclic, up to  isomorphism.  

Let $\Nu$ be a normal subgroup of an $n$-space group $\Gamma$ such that $\Gamma/\Nu$ is infinite cyclic. 
Let $\gamma$ be an element of $\Gamma$ such that $\gamma\Nu$ is a generator of $\Gamma/\Nu$. 
Then $\Gamma$ is an HNN extension with base $\Nu$, stable letter $\gamma$, and automorphism $\gamma_\ast$ of $\Nu$ 
defined by $\gamma_\ast(\nu)= \gamma\nu\gamma^{-1}$ for each $\nu$ in $\Nu$.
If $\mu \in \Nu$, then $\gamma\mu\Nu = \gamma\Nu$ and $(\gamma\mu)_\ast = \gamma_\ast\mu_\ast$. 
Hence, the generator $\gamma\Nu$ of $\Gamma/\Nu$ determines a 
unique element $\gamma_\ast\mathrm{Inn}(N)$ of $\mathrm{Out}_E(\Nu)$. 
The other generator $\gamma^{-1}\Nu$ of $\Gamma/\Nu$ 
determines the element $\gamma_\ast^{-1}\mathrm{Inn}(N)$ of $\mathrm{Out}_E(\Nu)$. 
Hence, the pair $(\Gamma, \Nu)$ determines the pair of inverse elements 
$\{\gamma_\ast\mathrm{Inn}(N), \gamma_\ast^{-1}\mathrm{Inn}(N)\}$ of $\mathrm{Out}_E(\Nu)$. 
As usual $(x,y)$ denotes an ordered pair whereas $\{x,y\}$ denotes an unordered pair. 

\begin{lemma}\label{L:5} 
Let $\Nu_i$ be a normal subgroup of an $n$-space group $\Gamma_i$ 
such that $\Gamma_i/\Nu_i$ is infinite cyclic for $i =1,2$.  
Let $\gamma_i\in \Gamma_i$ be such that $\gamma_i\Nu_i$ generates $\Gamma_i/\Nu_i$ for $i =1,2$. 
Then an isomorphism $\alpha: \Nu_1 \to \Nu_2$ extends to an isomorphism $\phi:\Gamma_1 \to \Gamma_2$ 
if and only if $\alpha(\gamma_1)_\ast\alpha^{-1} \mathrm{Inn}(\Nu_2) = (\gamma_2^{\pm 1})_\ast\mathrm{Inn}(\Nu_2)$. 
\end{lemma}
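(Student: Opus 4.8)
The plan is to treat $\Gamma_i$ as the HNN extension with base $\Nu_i$, stable letter $\gamma_i$, and identifying automorphism $(\gamma_i)_\ast$, and to use the universal property of HNN extensions to build the extension $\phi$ of $\alpha$ explicitly. Since $\Gamma_i/\Nu_i$ is infinite cyclic, every element of $\Gamma_i$ is uniquely $\nu\gamma_i^{\,k}$ with $\nu\in\Nu_i$ and $k\in\integers$; multiplication is governed by the relation $\gamma_i\nu\gamma_i^{-1} = (\gamma_i)_\ast(\nu)$.

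First I would prove the ``if'' direction. Suppose $\alpha(\gamma_1)_\ast\alpha^{-1}\mathrm{Inn}(\Nu_2) = (\gamma_2^{\,\epsilon})_\ast\mathrm{Inn}(\Nu_2)$ with $\epsilon = \pm 1$; replacing $\gamma_2$ by $\gamma_2^{-1}$ if necessary (this does not change the hypothesis that $\gamma_2\Nu_2$ generates $\Gamma_2/\Nu_2$), we may assume $\epsilon = 1$. Then there is $\mu\in\Nu_2$ with $\alpha(\gamma_1)_\ast\alpha^{-1} = \mu_\ast\,(\gamma_2)_\ast = (\mu\gamma_2)_\ast$ as automorphisms of $\Nu_2$. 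Set $\delta = \mu\gamma_2$; then $\delta\Nu_2$ still generates $\Gamma_2/\Nu_2$, and $\Gamma_2$ is also the HNN extension with base $\Nu_2$, stable letter $\delta$, and identifying automorphism $\delta_\ast = \alpha(\gamma_1)_\ast\alpha^{-1}$. By the universal property of the HNN extension $\Gamma_1 = \langle\, \Nu_1, \gamma_1 \mid \gamma_1\nu\gamma_1^{-1} = (\gamma_1)_\ast(\nu)\,\rangle$, the map sending $\nu\mapsto\alpha(\nu)$ for $\nu\in\Nu_1$ and $\gamma_1\mapsto\delta$ extends to a homomorphism $\phi:\Gamma_1\to\Gamma_2$, because the defining relations are preserved: $\phi(\gamma_1)\phi(\nu)\phi(\gamma_1)^{-1} = \delta\,\alpha(\nu)\,\delta^{-1} = \delta_\ast(\alpha(\nu)) = \alpha((\gamma_1)_\ast(\nu)) = \phi((\gamma_1)_\ast(\nu))$. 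Symmetrically one builds $\phi^{-1}:\Gamma_2\to\Gamma_1$ from $\alpha^{-1}$ and $\gamma_1\leftarrow\delta$, and checking the two composites on the generators $\Nu_i$ and the stable letter shows they are the identity, so $\phi$ is an isomorphism extending $\alpha$.

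Next I would prove the ``only if'' direction. Suppose $\phi:\Gamma_1\to\Gamma_2$ is an isomorphism with $\phi|_{\Nu_1} = \alpha$. Since $\Nu_i$ is characterized inside $\Gamma_i$ as (for instance) the unique normal subgroup with quotient infinite cyclic that occurs here, or more simply since $\phi(\Nu_1) = \Nu_2$ is given, $\phi$ descends to an isomorphism $\bar\phi:\Gamma_1/\Nu_1\to\Gamma_2/\Nu_2$ of infinite cyclic groups, so $\bar\phi(\gamma_1\Nu_1) = \gamma_2^{\,\epsilon}\Nu_2$ for some $\epsilon = \pm 1$. Hence $\phi(\gamma_1) = \mu\gamma_2^{\,\epsilon}$ for some $\mu\in\Nu_2$. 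Now for any $\nu\in\Nu_1$,
$$
\alpha\bigl((\gamma_1)_\ast(\nu)\bigr) = \phi\bigl(\gamma_1\nu\gamma_1^{-1}\bigr) = \phi(\gamma_1)\,\phi(\nu)\,\phi(\gamma_1)^{-1} = \mu\gamma_2^{\,\epsilon}\,\alpha(\nu)\,\gamma_2^{-\epsilon}\mu^{-1} = \bigl(\mu_\ast(\gamma_2^{\,\epsilon})_\ast\bigr)\bigl(\alpha(\nu)\bigr).
$$
Therefore $\alpha(\gamma_1)_\ast\alpha^{-1} = \mu_\ast\,(\gamma_2^{\,\epsilon})_\ast$ as automorphisms of $\Nu_2$, which is exactly $\alpha(\gamma_1)_\ast\alpha^{-1}\mathrm{Inn}(\Nu_2) = (\gamma_2^{\,\epsilon})_\ast\mathrm{Inn}(\Nu_2)$, as claimed (here I use that $(\gamma_2^{-1})_\ast = ((\gamma_2)_\ast)^{-1}$ and that inner automorphisms by elements of $\Nu_2$ form the normal subgroup $\mathrm{Inn}(\Nu_2)$ of $\mathrm{Aut}(\Nu_2)$).

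I expect the main obstacle to be bookkeeping rather than conceptual: one must be careful that replacing $\gamma_2$ by $\mu\gamma_2$ or by $\gamma_2^{-1}$ genuinely presents the same group $\Gamma_2$ as an HNN extension (so that the universal property applies), and that the map built from the universal property is the two-sided inverse of the one built in the opposite direction — this amounts to checking agreement on the generating set $\Nu_i\cup\{\gamma_i\}$, which is routine. A small point worth stating carefully is that the condition ``$\alpha$ extends to $\phi$'' in the statement means $\phi|_{\Nu_1} = \alpha$ and in particular $\phi(\Nu_1) = \Nu_2$; since $\Nu_i = \ker(\Gamma_i\to\Gamma_i/\Nu_i)$ is not assumed a priori to be preserved by an arbitrary isomorphism, we genuinely use that $\alpha$ is given as an isomorphism $\Nu_1\to\Nu_2$, so $\phi(\Nu_1) = \Nu_2$ is automatic and the descent to quotients is legitimate.
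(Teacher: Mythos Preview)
Your proof is correct and follows essentially the same approach as the paper's: both directions use the normal-form description of $\Gamma_i$ as an HNN extension of $\Nu_i$ with stable letter $\gamma_i$, replace $\gamma_2$ by $\gamma_2^{-1}$ if necessary, pick $\mu\in\Nu_2$ realizing the inner automorphism, and send $\gamma_1$ to the adjusted stable letter. The only cosmetic difference is that you invoke the universal property of HNN extensions and build $\phi^{-1}$ symmetrically, whereas the paper writes down $\phi(\nu\gamma_1^k)=\alpha(\nu)(\gamma_2\mu)^k$ explicitly and verifies the homomorphism identity by a direct chain of equalities; these are the same argument at two levels of abstraction.
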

\begin{proof}
Suppose $\alpha$ extends to an isomorphism $\phi:\Gamma_1 \to \Gamma_2$. 
Then $\phi(\gamma_1)$ is an element of $\Gamma_2$ such that $\phi(\gamma_1)\Nu_2$ generates $\Gamma_2/\Nu_2$. 
Hence $\phi(\gamma_1)\Nu_2 = \gamma^{\pm 1}_2\Nu_2$. 
If $\nu \in \Nu_2$, then 
$$(\alpha(\gamma_1)_\ast\alpha^{-1})(\nu) =\alpha(\gamma_1\alpha^{-1}(\nu)\gamma_1^{-1}) 
= \phi(\gamma_1)\nu\phi(\gamma_1)^{-1}.$$
Hence $\alpha(\gamma_1)_\ast\alpha^{-1} = (\phi(\gamma_1))_\ast$, and  
so $\alpha(\gamma_1)_\ast\alpha^{-1} \mathrm{Inn}(\Nu_2) = (\gamma_2^{\pm 1})_\ast\mathrm{Inn}(\Nu_2)$. 

Conversely, suppose that 
$\alpha(\gamma_1)_\ast\alpha^{-1} \mathrm{Inn}(\Nu_2) = (\gamma_2^{\pm 1})_\ast\mathrm{Inn}(\Nu_2)$. 
By replacing $\gamma_2$ by $\gamma_2^{-1}$, if necessary, we may assume that 
$\alpha(\gamma_1)_\ast\alpha^{-1}\mathrm{Inn}(\Nu_2) = (\gamma_2)_\ast\mathrm{Inn}(\Nu_2)$. 
Then there exists $\mu \in \Nu_2$ such that $\alpha(\gamma_1)_\ast\alpha^{-1}= (\gamma_2)_\ast\mu_\ast$. 
Hence $\alpha(\gamma_1)_\ast\alpha^{-1} = (\gamma_2\mu)_\ast$. 
Define $\phi:\Gamma_1\to \Gamma_2$ by 
$$\phi(\nu\gamma_1^k) = \alpha(\nu)(\gamma_2\mu)^k$$
for each $\nu\in\Nu_1$ and $k\in\integers$. 
If $\nu, \lambda \in \Nu_1$ and $k,\ell \in \integers$, 
then we have 
\begin{eqnarray*}
\phi(\nu\gamma_1^k\lambda\gamma_1^\ell) 
& = & \phi(\nu\gamma_1^k\lambda\gamma_1^{-k}\gamma_1^{k+\ell}) \\
& = & \phi(\nu(\gamma_1)_\ast^k(\lambda)\gamma_1^{k+\ell}) \\
& = & \alpha(\nu(\gamma_1)_\ast^k(\lambda))(\gamma_2\mu)^{k+\ell} \\
& = & \alpha(\nu)\alpha(\gamma_1)_\ast^k\alpha^{-1}\alpha(\lambda)(\gamma_2\mu)^{k+\ell} \\
& = & \alpha(\nu)(\alpha(\gamma_1)_\ast\alpha^{-1})^k\alpha(\lambda)(\gamma_2\mu)^{k+\ell} \\
& = & \alpha(\nu)(\gamma_2\mu)_\ast^k\alpha(\lambda)(\gamma_2\mu)^{k+\ell} \\
& = & \alpha(\nu)(\gamma_2\mu)^k\alpha(\lambda)(\gamma_2\mu)^{-k}(\gamma_2\mu)^{k+\ell} \\
& = & \alpha(\nu)(\gamma_2\mu)^k\alpha(\lambda)(\gamma_2\mu)^{\ell} 
\ \ = \ \ \phi(\nu\gamma_1^k)\phi(\lambda\gamma_1^\ell).
\end{eqnarray*}
Hence $\phi$ is a homomorphism;  
moreover $\phi$ is an isomorphism, since $\phi$ restricts to $\alpha$ on $\Nu_1$ and 
$\phi(\gamma_1) = \gamma_2\mu$. 
\end{proof}

\begin{lemma}\label{L:6} 
Let $\Mu$ be an $(n-1)$-space group, and let $\Delta$ be an infinite cyclic 1-space group 
with generator $\delta$.  
The set $\mathrm{Out}(\Delta,\Mu)$ is in one-to-one correspondence with the set 
of conjugacy classes of pairs of inverse elements of $\mathrm{Out}(\Mu)$ of finite order. 
The element $[\eta]$ of $\mathrm{Out}(\Delta,\Mu)$ corresponds to the conjugacy class of $\{\eta(\delta), \eta(\delta^{-1})\}$. 
\end{lemma}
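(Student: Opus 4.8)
The plan is to unwind the definition of $\mathrm{Out}(\Delta,\Mu)$ when $\Delta$ is infinite cyclic, using the generator $\delta$ to identify homomorphisms with elements of $\mathrm{Out}(\Mu)$, and then to track how the two group actions (the $\mathrm{Out}(\Mu)$-conjugation action on the left, and the $\mathrm{Out}(\Delta)$-action on the right) descend to operations on those elements. First I would observe that since $\Delta = \langle \delta \rangle$ is free abelian of rank one, a homomorphism $\eta \colon \Delta \to \mathrm{Out}(\Mu)$ is completely determined by the single element $\eta(\delta) \in \mathrm{Out}(\Mu)$, and conversely $\eta(\delta)$ may be any element of $\mathrm{Out}(\Mu)$; the condition $\eta \in \mathrm{Hom}_f(\Delta, \mathrm{Out}(\Mu))$, i.e. that $\eta(\Delta)$ is finite, is then precisely the condition that $\eta(\delta)$ have finite order. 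So evaluation at $\delta$ gives a bijection from $\mathrm{Hom}_f(\Delta, \mathrm{Out}(\Mu))$ onto the set of finite-order elements of $\mathrm{Out}(\Mu)$.

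Next I would compute the effect of the two actions under this identification. For the left action of $\mathrm{Out}(\Mu)$: if $g \in \mathrm{Out}(\Mu)$ then $(g\eta)(\delta) = g_\ast(\eta(\delta)) = g\,\eta(\delta)\,g^{-1}$, so the $\mathrm{Out}(\Mu)$-orbit of $\eta$ corresponds exactly to the conjugacy class of $\eta(\delta)$ in $\mathrm{Out}(\Mu)$. For the right action of $\mathrm{Out}(\Delta)$: I would use the standard fact that $\mathrm{Aut}(\Delta) \cong \{\pm 1\}$ for $\Delta$ infinite cyclic, generated by the inversion automorphism $\delta \mapsto \delta^{-1}$, and that $\mathrm{Inn}(\Delta)$ is trivial since $\Delta$ is abelian, so $\mathrm{Out}(\Delta) = \mathrm{Aut}(\Delta) \cong \{\pm 1\}$. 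The nontrivial element acts on $\eta$ by precomposition with inversion, sending $\eta$ to the homomorphism $\eta'$ with $\eta'(\delta) = \eta(\delta^{-1}) = \eta(\delta)^{-1}$. Combining, the class $[\eta] \in \mathrm{Out}(\Delta,\Mu)$ is exactly the orbit of $\eta(\delta)$ under the group generated by conjugations and inversion, which is the conjugacy class of the unordered pair $\{\eta(\delta), \eta(\delta)^{-1}\} = \{\eta(\delta), \eta(\delta^{-1})\}$.

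Then I would assemble these observations: the composite correspondence sends $[\eta]$ to the conjugacy class of the pair of inverse elements $\{\eta(\delta), \eta(\delta^{-1})\}$; it is well-defined and injective because two homomorphisms give the same element of $\mathrm{Out}(\Delta,\Mu)$ iff their values at $\delta$ differ by a conjugation possibly composed with inversion, which is iff the associated inverse-pairs are conjugate; and it is surjective because any finite-order element $h \in \mathrm{Out}(\Mu)$ is $\eta(\delta)$ for the homomorphism $\eta$ with $\eta(\delta) = h$, whose pair is $\{h, h^{-1}\}$, and every conjugacy class of inverse-pairs of finite-order elements arises this way. I would note the minor bookkeeping point that a "pair of inverse elements" $\{h, h^{-1}\}$ is a singleton when $h = h^{-1}$ (i.e. $h$ has order $1$ or $2$), and that this causes no trouble for the bijection.

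I do not expect a serious obstacle here; the statement is essentially a matter of chasing definitions. The one place requiring a little care is the description of $\mathrm{Out}(\Delta)$ and its right action: one must correctly invoke that $\mathrm{Inn}(\Delta)$ is trivial and that the right action as defined in Section \ref{S:5} descends through $\mathrm{Inn}(\Delta)$, so that the relevant group is genuinely $\{\pm 1\}$ acting by $\delta \mapsto \delta^{\pm 1}$; getting the direction of precomposition right (so that it produces $\eta(\delta^{-1})$ rather than something else) is the only subtlety. Everything else is formal.
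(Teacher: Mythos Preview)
Your proposal is correct and follows essentially the same approach as the paper's proof: identify $\mathrm{Hom}_f(\Delta,\mathrm{Out}(\Mu))$ with finite-order elements of $\mathrm{Out}(\Mu)$ via evaluation at $\delta$, then note that $\mathrm{Out}(\Delta)=\mathrm{Aut}(\Delta)$ has order two generated by inversion, so that the double quotient becomes conjugacy classes of inverse pairs. Your write-up is in fact more detailed than the paper's, which compresses the same reasoning into three sentences.
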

\begin{proof}
The the set $\mathrm{Hom}_f(\Delta,\mathrm{Out}(\Mu))$ is in one-to-one correspondence 
with the set of elements of $\mathrm{Out}(\Mu)$ of finite order via the mapping $\eta\mapsto \eta(\delta)$, 
since $\Delta$ is the free group generated by $\delta$. 
The infinite cyclic group $\Delta$ has a unique automorphism that maps $\delta$ to $\delta^{-1}$ 
and this automorphism represents the generator of the group $\mathrm{Aut}(\Delta) = \mathrm{Out}(\Delta)$ 
of order 2. 
Therefore, the set $\mathrm{Out}(\Delta,\Mu)$ is in one-to-one correspondence with the set 
of conjugacy classes of pairs of inverse elements of $\mathrm{Out}(\Mu)$ of finite order via 
the mapping $[\eta] \mapsto [\{\eta(\delta), \eta(\delta^{-1}\}]$. 
\end{proof}

\begin{theorem}\label{T:6} 
If $\Delta$ is an infinite cyclic $1$-space group and $\Mu$ is an $(n-1)$-space group, 
then the function $\omega: \mathrm{Iso}(\Delta,\Mu) \to \mathrm{Out}(\Delta,\Mu)$ is a bijection.
\end{theorem}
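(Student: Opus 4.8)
The plan is to show that $\omega$ is a bijection by producing an explicit inverse, using the description of the domain and codomain obtained in Lemmas \ref{L:5} and \ref{L:6}. Recall from Lemma \ref{L:6} that $\mathrm{Out}(\Delta,\Mu)$ is in bijection with the set of conjugacy classes of unordered pairs $\{g, g^{-1}\}$ of inverse elements of $\mathrm{Out}(\Mu)$ of finite order, via $[\eta] \mapsto [\{\eta(\delta), \eta(\delta^{-1})\}]$. On the other side, an element of $\mathrm{Iso}(\Delta,\Mu)$ is an isomorphism class $[\Gamma,\Nu]$ with $\Gamma/\Nu$ infinite cyclic and $\Nu \cong \Mu$; choosing $\gamma \in \Gamma$ with $\gamma\Nu$ a generator and an isomorphism $\alpha\colon\Nu\to\Mu$, the pair determines the unordered pair $\{\alpha(\gamma)_\ast\alpha^{-1}\mathrm{Inn}(\Mu),\, \alpha(\gamma^{-1})_\ast\alpha^{-1}\mathrm{Inn}(\Mu)\}$ of inverse elements of $\mathrm{Out}_E(\Mu) \subseteq \mathrm{Out}(\Mu)$, which is exactly $\omega([\Gamma,\Nu])$ read through the correspondence of Lemma \ref{L:6}.

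First I would check that $\omega$ is injective. Suppose $[\Gamma_1,\Nu_1]$ and $[\Gamma_2,\Nu_2]$ have the same image. Pick generators $\gamma_i\Nu_i$ and isomorphisms $\alpha_i\colon\Nu_i\to\Mu$; equality of images says the unordered pairs $\{\alpha_i(\gamma_i)_\ast\alpha_i^{-1}\mathrm{Inn}(\Mu)\}^{\pm 1}$ are conjugate in $\mathrm{Out}(\Mu)$. Absorbing the conjugating element and the possible inversion, we may arrange that $\alpha\colon\Nu_1\to\Nu_2$ defined by $\alpha = \alpha_2^{-1}\alpha_1$ (after adjusting $\alpha_2$ by an automorphism of $\Mu$) satisfies $\alpha(\gamma_1)_\ast\alpha^{-1}\mathrm{Inn}(\Nu_2) = (\gamma_2^{\pm 1})_\ast\mathrm{Inn}(\Nu_2)$; then Lemma \ref{L:5} extends $\alpha$ to an isomorphism $\Gamma_1\to\Gamma_2$ carrying $\Nu_1$ to $\Nu_2$, so $[\Gamma_1,\Nu_1] = [\Gamma_2,\Nu_2]$.

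Next I would check surjectivity. Given a finite-order element $g\mathrm{Inn}(\Mu)\in\mathrm{Out}(\Mu)$, I want a pair $(\Gamma,\Nu)$ realizing it. The cleanest route is to invoke Lemma \ref{L:3} (or Theorem \ref{T:5} if one is willing to assume trivial center, but surjectivity of $\psi$ alone suffices): the homomorphism $\eta\colon\Delta\to\mathrm{Out}(\Mu)$ with $\eta(\delta) = g\mathrm{Inn}(\Mu)$ lies in $\mathrm{Hom}_f(\Delta,\mathrm{Out}(\Mu))$ since its image is finite cyclic, and by Lemma \ref{L:6} it represents the desired class. Lift $\eta$ to $\mathrm{Hom}_f(\Delta,\mathrm{Aff}(\Mu))$ (choosing any affinity of $E^{n-1}/\Mu$ inducing $g\mathrm{Inn}(\Mu)$ under $\Omega$ — possible because $\Omega$ is onto $\mathrm{Out}$ is not needed; one only needs $\Omega$ applied to the image to have finite image, which holds), apply $\psi$ from Lemma \ref{L:3} to get $[\Gamma,\Nu]\in\mathrm{Iso}(\Delta,\Mu)$, and then $\omega([\Gamma,\Nu]) = [\eta]$ by Lemma \ref{L:4} ($\omega\psi = \chi$, and $\chi([\text{lift of }\eta]) = [\Omega\circ\text{lift}] = [\eta]$).

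The main obstacle is that $\Mu$ is an arbitrary $(n-1)$-space group, so we cannot assume it has trivial center and hence Corollary \ref{C:2} does not apply directly; the two-sided ambiguity in Lemma \ref{L:5} (the $\pm1$ on $\gamma_2$) and the conjugation ambiguity must be matched precisely against the unordered-pair-up-to-conjugacy structure of Lemma \ref{L:6}, with no help from $\Omega$ being injective. So the real work is the bookkeeping in the injectivity step, ensuring that once the images agree as conjugacy classes of inverse pairs, one can genuinely choose the isomorphism $\Nu_1\to\Nu_2$ so that Lemma \ref{L:5}'s hypothesis holds on the nose — using that $\mathrm{Aut}(\Mu)$ acts transitively on isomorphisms $\Nu_i\to\Mu$ and that conjugating $g$ in $\mathrm{Out}(\Mu)$ corresponds to precomposing $\alpha_2$ with an automorphism. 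Once that alignment is done, everything else is a direct appeal to the already-established lemmas.
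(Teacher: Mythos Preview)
Your proposal is correct and takes essentially the same approach as the paper: surjectivity by lifting $\eta$ along the epimorphism $\Omega:\mathrm{Aff}(\Mu)\to\mathrm{Out}(\Mu)$ and invoking $\chi=\omega\psi$ (Lemma~\ref{L:4}), and injectivity by translating equality in $\mathrm{Out}(\Delta,\Mu)$ via Lemma~\ref{L:6} into the hypothesis of Lemma~\ref{L:5} for a suitably adjusted isomorphism $\Nu_1\to\Nu_2$. The only cleanup needed is your parenthetical about $\Omega$: you \emph{do} need surjectivity of $\Omega$ to produce the lift (since $\Delta$ is free on $\delta$, a lift amounts to choosing a preimage of $\eta(\delta)$), and the paper cites this explicitly.
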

\begin{proof} Any homomorphism $\eta:\Delta \to \mathrm{Out}(\Mu)$ lifts to a homomorphism 
$\tilde\eta:\Delta\to\mathrm{Aff}(\Mu)$ such that $\Omega\tilde\eta = \eta$, 
since $\Omega: \mathrm{Aff}(\Mu) \to \mathrm{Out}(\Mu)$ is onto by Theorem 3 of \cite{R-T-I}. 
Hence, the function $\chi:\mathrm{Aff}(\Delta,\Mu) \to \mathrm{Out}(\Delta,\Mu)$ is surjective. 
Therefore $\omega$ is surjective, since $\chi = \omega\psi$ by Lemma \ref{L:4} 

Let $\Nu_i$ be a normal subgroup of an $n$-space group $\Gamma_i$ 
with $\Gamma_i/\Nu_i$ infinite cyclic for $i = 1,2$ such that 
$\omega([\Gamma_1,\Nu_1]) = \omega([\Gamma_2,\Nu_2])$.  
Let $\alpha_i: \Nu_i \to \Mu$ be an isomorphism for $i=1,2$.  
Let $\gamma_i \in \Gamma_i$ such that $\Nu_i\gamma_i$ generates $\Gamma_i/\Nu_i$ for each $i=1,2$, 
and let $\delta$ be a generator of $\Delta$. 
Define an isomorphism $\beta_i:\Delta\to \Gamma_i/\Nu_i$ by $\beta_i(\delta) = \gamma_i$ for $i =1,2$. 
Let $\mathcal{O}_i: \Gamma_i/\Nu_i \to \mathrm{Out}_E(\Nu_i)$ be the homomorphism induced by the action 
of $\Gamma_i$ on $\Nu_i$ by conjugation for $i=1,2$. 
Then in $\mathrm{Out}(\Delta,\Mu)$, we have 
$$[(\alpha_1)_\#\mathcal{O}_1\beta_1] = [(\alpha_2)_\#\mathcal{O}_2\beta_2].$$
Hence, by Lemma \ref{L:6}, there is an automorphism $\zeta$ of $\Mu$ such that 
$$\{\zeta\alpha_1(\gamma_1^{\pm 1})_\ast\alpha_1^{-1}\zeta^{-1}\,\mathrm{Inn}(\Mu)\} = 
\{\alpha_2(\gamma_2^{\pm 1})_\ast\alpha_2^{-1}\mathrm{Inn}(\Mu)\}.$$
After applying the isomorphism $(\alpha_2^{-1})_\#: \mathrm{Out}(\Mu) \to \mathrm{Out}(\Nu_2)$, we have that 
$$\{\alpha_2^{-1}\zeta\alpha_1(\gamma_1^{\pm 1})_\ast\alpha_1^{-1}\zeta^{-1}\alpha_2\mathrm{Inn}(\Nu_2)\} = 
\{(\gamma_2^{\pm 1})_\ast\mathrm{Inn}(\Nu_2)\}.$$
Hence, the isomorphism $\alpha_2^{-1}\zeta\alpha_1: \Nu_1 \to \Nu_2$ extends to an isomorphism 
$\phi:(\Gamma_1, \Nu_1) \to (\Gamma_2,\Nu_2)$ by Lemma \ref{L:5}.  
Therefore $[\Gamma_1,\Nu_1]= [\Gamma_2,\Nu_2]$, and so $\omega$ is injective. 
\end{proof}

\begin{theorem}\label{T:7} 
Let $\Mu$ be an $(n-1)$-space group, and let $\Delta$ be an infinite cyclic 1-space group.  
The set $\mathrm{Iso}(\Delta,\Mu)$ is in one-to-one correspondence with the 
set of conjugacy classes of pairs of inverse elements of $\mathrm{Out}(\Mu)$ of finite order.  
If $[\Gamma,\Nu] \in \mathrm{Iso}(\Delta,\Mu)$ and 
$\alpha: \Nu \to \Mu$ is an isomorphism and $\gamma$ is an element of $\Gamma$ such that $\Nu\gamma$ generates 
$\Gamma/\Nu$, then $[\Gamma, \Nu]$ corresponds to the conjugacy class 
of the pair of inverse elements $\{\alpha\gamma_\ast^{\pm 1}\alpha^{-1}\mathrm{Inn}(\Mu)\}$ of $\mathrm{Out}(\Mu)$.   
\end{theorem}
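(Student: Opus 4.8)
The plan is to derive Theorem \ref{T:7} as an immediate consequence of the two results just established, namely Theorem \ref{T:6} (the bijection $\omega:\mathrm{Iso}(\Delta,\Mu)\to\mathrm{Out}(\Delta,\Mu)$) and Lemma \ref{L:6} (the explicit description of $\mathrm{Out}(\Delta,\Mu)$ as the set of conjugacy classes of pairs of inverse finite-order elements of $\mathrm{Out}(\Mu)$). Composing these two bijections gives a bijection from $\mathrm{Iso}(\Delta,\Mu)$ to the set of conjugacy classes of pairs of inverse elements of $\mathrm{Out}(\Mu)$ of finite order, which is exactly the first assertion.

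For the second assertion I would track what the composite bijection does to a representative pair $[\Gamma,\Nu]$. First I would fix an isomorphism $\alpha:\Nu\to\Mu$ and an element $\gamma\in\Gamma$ with $\Nu\gamma$ generating $\Gamma/\Nu$, and fix a generator $\delta$ of $\Delta$, defining $\beta:\Delta\to\Gamma/\Nu$ by $\beta(\delta)=\Nu\gamma$. By the definition of $\omega$ in Section \ref{S:5}, $\omega([\Gamma,\Nu]) = [\alpha_\#\mathcal{O}\beta]$, where $\mathcal{O}:\Gamma/\Nu\to\mathrm{Out}_E(\Nu)$ sends $\Nu\gamma$ to $\gamma_\ast\mathrm{Inn}(\Nu)$. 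Evaluating the homomorphism $\alpha_\#\mathcal{O}\beta$ at $\delta$ gives $\alpha_\#(\gamma_\ast\mathrm{Inn}(\Nu)) = \alpha\gamma_\ast\alpha^{-1}\mathrm{Inn}(\Mu)$, and at $\delta^{-1}$ gives $\alpha\gamma_\ast^{-1}\alpha^{-1}\mathrm{Inn}(\Mu)$. Then by Lemma \ref{L:6}, the element $[\alpha_\#\mathcal{O}\beta]$ of $\mathrm{Out}(\Delta,\Mu)$ corresponds to the conjugacy class of the pair $\{\alpha\gamma_\ast^{\pm1}\alpha^{-1}\mathrm{Inn}(\Mu)\}$, which is the claimed formula.

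I would also remark on well-definedness: different choices of $\alpha$, of $\gamma$ (within its coset, and up to inversion), and of $\delta$ change the pair only by conjugation in $\mathrm{Out}(\Mu)$ and by swapping the two inverse elements, so the conjugacy class of the unordered pair is a genuine invariant of $[\Gamma,\Nu]$ — this is already implicit in the construction of $\omega$ and in the discussion preceding Lemma \ref{L:5}, so it needs no new argument, only a sentence pointing to it. Finally, the finite-order condition on $\alpha\gamma_\ast\alpha^{-1}\mathrm{Inn}(\Mu)$ is automatic: it lies in $\mathrm{Out}_E(\Mu)$, which is finite by Theorem 2 of \cite{R-T-I}, equivalently it is the image under $\alpha_\#$ of an element of the finite group $\mathrm{Out}_E(\Nu)$.

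The proof has essentially no obstacle — it is a bookkeeping assembly of Theorem \ref{T:6} and Lemma \ref{L:6}. The only point requiring a little care is making sure the correspondence in Lemma \ref{L:6} is applied with the same choice of generator $\delta$ that is used to define $\beta$, so that $\eta(\delta) = (\alpha_\#\mathcal{O}\beta)(\delta)$ really equals $\alpha\gamma_\ast\alpha^{-1}\mathrm{Inn}(\Mu)$ rather than its inverse; since the target object is an unordered pair this ambiguity is harmless, but I would phrase the argument so the reader sees why. Thus the write-up will be short: cite Theorem \ref{T:6} for the bijection, cite Lemma \ref{L:6} for the identification of the codomain, and compute $\omega([\Gamma,\Nu])(\delta^{\pm1})$ explicitly to read off the displayed formula.
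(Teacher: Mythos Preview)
Your proposal is correct and follows essentially the same approach as the paper: invoke Theorem \ref{T:6} for the bijection $\omega$, then use Lemma \ref{L:6} to identify $\mathrm{Out}(\Delta,\Mu)$ with conjugacy classes of inverse pairs, and finally compute $\alpha_\#\mathcal{O}\beta$ on the generator to obtain the explicit formula. Your additional remarks on well-definedness and the finite-order condition are reasonable but not present in the paper's proof, which is terser.
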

\begin{proof}
The function $\omega: \mathrm{Iso}(\Delta,\Mu) \to \mathrm{Out}(\Delta,\Mu)$ is a bijection by Theorem \ref{T:6}. 
Let $[\Gamma,\Nu]\in\mathrm{Iso}(\Delta,\Mu)$, 
and let $\gamma$ be an element of $\Gamma$ such that $\Nu\gamma$ generates $\Gamma/\Nu$. 
Let  $\mathcal{O}: \Gamma/\Nu \to \mathrm{Out}_E(\Nu)$ 
be the homomorphism induced by the action of $\Gamma$ on $\Nu$ by conjugation. 
Let $\alpha: \Nu \to \Mu$ and $\beta:\Delta \to \Gamma/\Nu$ be isomorphisms.  
Then $\omega([\Gamma,\Nu]) = [\alpha_\#\mathcal{O}\beta]$, 
and $ [\alpha_\#\mathcal{O}\beta]$ corresponds to the conjugacy class of the pair of inverse elements 
\begin{eqnarray*}
\{\alpha_\#\mathcal{O}\beta(\beta^{-1}(\Nu\gamma^{\pm 1}))\}  
& = & \{\alpha_\#\mathcal{O}(\Nu\gamma^{\pm 1})\}  \\ 
& = & \{\alpha_\#(\gamma_\ast^{\pm 1}\mathrm{Inn}(\Nu))\}  
\ \ = \ \ \{\alpha\gamma_\ast^{\pm 1}\alpha^{-1}\mathrm{Inn}(\Mu)\}
\end{eqnarray*}
of $\mathrm{Out}(\Mu)$ by Lemma \ref{L:6}. 
\end{proof}

\section{Co-Seifert Geometric Fibrations Over A Closed Interval}\label{S:8} 

In this section, we describe the classification of the geometric fibrations 
of compact, connected, flat $n$-orbifolds, over a closed interval, up to affine equivalence.  
By Theorems 5 and 10 of \cite{R-T} this is equivalent to classifying all pairs $(\Gamma, \Nu)$, 
consisting of an $n$-space group $\Gamma$ and a normal subgroup $\Nu$ such that $\Gamma/\Nu$ is infinite dihedral, 
up to  isomorphism.  

Let $\Delta$ be an infinite dihedral group.  
A set of {\it Coxeter generators} of $\Delta$ is a pair of elements of $\Delta$ of order 2 that generate $\Delta$. 
Any two sets of Coxeter generators of $\Delta$ are conjugate in $\Delta$. 

\begin{lemma}\label{L:7} 
Let $\Mu$ be an $(n-1)$-space group, and let $\Delta$ be an infinite dihedral 1-space group 
with Coxeter generators $\delta_1$ and $\delta_2$.  
The set $\mathrm{Out}(\Delta,\Mu)$ is in one-to-one correspondence with the set 
of conjugacy classes of pairs of elements of $\mathrm{Out}(\Mu)$ of order 1 or  2 whose product has finite order. 
The element $[\eta]$ of $\mathrm{Out}(\Delta,\Mu)$ corresponds to the conjugacy class of 
$\{\eta(\delta_1),\eta(\delta_2)\}$. 
\end{lemma}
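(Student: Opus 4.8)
The plan is to mimic the structure of the proof of Lemma~\ref{L:6} for the infinite cyclic case, replacing the description of $\mathrm{Hom}_f(\Delta,\mathrm{Out}(\Mu))$ and $\mathrm{Out}(\Delta)$ by the corresponding facts for the infinite dihedral group. First I would observe that since $\Delta$ has the presentation $\langle \delta_1,\delta_2 \mid \delta_1^2 = \delta_2^2 = 1\rangle$, a homomorphism $\eta:\Delta \to \mathrm{Out}(\Mu)$ is determined freely by the pair $(\eta(\delta_1),\eta(\delta_2))$ subject only to the constraint that each $\eta(\delta_i)$ has order $1$ or $2$. The finite-image condition on $\eta$ translates to the requirement that the subgroup generated by $\eta(\delta_1)$ and $\eta(\delta_2)$ is finite, and since this subgroup is itself a dihedral-type group generated by two involutions, its finiteness is equivalent to $\eta(\delta_1)\eta(\delta_2)$ having finite order. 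Hence $\mathrm{Hom}_f(\Delta,\mathrm{Out}(\Mu))$ is in bijection with the set of ordered pairs $(g_1,g_2)$ of elements of $\mathrm{Out}(\Mu)$, each of order $1$ or $2$, with $g_1g_2$ of finite order, via $\eta \mapsto (\eta(\delta_1),\eta(\delta_2))$.

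Next I would account for the $\mathrm{Out}(\Mu)$-action and the $\mathrm{Out}(\Delta)$-action. The left conjugation action of $\mathrm{Out}(\Mu)$ simply conjugates the pair $(g_1,g_2)$ simultaneously, so $\mathrm{Out}(\Mu)\backslash\mathrm{Hom}_f(\Delta,\mathrm{Out}(\Mu))$ is the set of such ordered pairs up to simultaneous conjugacy. For the right $\mathrm{Out}(\Delta)$-action I need to identify $\mathrm{Out}(\Delta)$ and its effect on pairs of Coxeter generators. The automorphism group of the infinite dihedral group is well understood: every automorphism is realized by conjugation together with possibly swapping the two "ends," and modulo inner automorphisms one checks that $\mathrm{Out}(\Delta)$ has order $2$, with the nontrivial element represented by the automorphism interchanging $\delta_1$ and $\delta_2$. (More precisely, any automorphism sends $\{\delta_1,\delta_2\}$ to another set of Coxeter generators, which by the stated fact is conjugate to $\{\delta_1,\delta_2\}$, so after composing with an inner automorphism the automorphism either fixes the ordered pair $(\delta_1,\delta_2)$ or swaps it.) Therefore the $\mathrm{Out}(\Delta)$-action on simultaneous-conjugacy classes of ordered pairs is exactly the swap $(g_1,g_2)\mapsto(g_2,g_1)$, and quotienting identifies ordered pairs with unordered pairs.

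Combining the two steps, $\mathrm{Out}(\Delta,\Mu) = (\mathrm{Out}(\Mu)\backslash\mathrm{Hom}_f(\Delta,\mathrm{Out}(\Mu)))/\mathrm{Out}(\Delta)$ is in one-to-one correspondence with the set of conjugacy classes of \emph{unordered} pairs $\{g_1,g_2\}$ of elements of $\mathrm{Out}(\Mu)$ of order $1$ or $2$ whose product $g_1g_2$ has finite order, and tracing through the bijections shows that $[\eta]$ corresponds to the conjugacy class of $\{\eta(\delta_1),\eta(\delta_2)\}$, as claimed. I expect the main obstacle to be the careful verification that $\mathrm{Out}(\Delta) \cong \integers/2\integers$ acts on conjugacy classes of ordered Coxeter-generator pairs precisely by the transposition; this requires knowing the structure of $\mathrm{Aut}(\mathrm{Inf.\ Dih.})$ and checking that the "translation-direction-reversing" automorphisms are all inner while the generator-swap is not. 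The rest of the argument is a routine unwinding of the definitions of the two group actions, parallel to Lemma~\ref{L:6}.
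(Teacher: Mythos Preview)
Your proposal is correct and follows essentially the same approach as the paper's proof: identify $\mathrm{Hom}_f(\Delta,\mathrm{Out}(\Mu))$ with ordered pairs via the free-product presentation of $\Delta$, then quotient by simultaneous conjugation and by the order-$2$ group $\mathrm{Out}(\Delta)$ acting by the swap. The paper simply asserts that $\mathrm{Out}(\Delta)$ has order $2$ generated by the transposition of $\delta_1$ and $\delta_2$, whereas you supply a bit more justification for this fact; otherwise the arguments are the same.
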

\begin{proof}
The set $\mathrm{Hom}_f(\Delta,\mathrm{Out}(\Mu))$ is in one-to-one correspondence 
with the set of ordered pairs of elements of $\mathrm{Out}(\Mu)$, of order 1 or 2  whose product has finite order, 
via the mapping $\eta\mapsto (\eta(\delta_1),\eta(\delta_2))$, 
since $\Delta$ is the free product of the cyclic groups of order two generated by $\delta_1$ and $\delta_2$. 
The infinite dihedral group $\Delta$ has a unique automorphism that transposes $\delta_1$ and $\delta_2$,  
and this automorphism represents the generator of the group $\mathrm{Out}(\Delta)$ of order 2. 
Therefore, the set $\mathrm{Out}(\Delta,\Mu)$ is in one-to-one correspondence with the set 
of conjugacy classes of unordered pairs of elements of $\mathrm{Out}(\Mu)$, of order 1 or  2 whose product has 
of finite order, via the mapping $[\eta] \mapsto [\{\eta(\delta_1), \eta(\delta_2\}]$. 
\end{proof}

\begin{theorem}\label{T:8} 
Let $\Mu$ be an $(n-1)$-space group with trivial center, 
and let $\Delta$ be an infinite dihedral 1-space group.  
The set $\mathrm{Iso}(\Delta,\Mu)$ is in one-to-one correspondence 
with the set of conjugacy classes of pairs of elements of $\mathrm{Out}(\Mu)$ of order 1 or  2 
whose product has finite order. 
If $[\Gamma,\Nu] \in \mathrm{Iso}(\Delta,\Mu)$ and $\alpha: \Nu \to \Mu$ is an isomorphism,  
and $\gamma_1, \gamma_2$ are elements of $\Gamma$ 
such that $\{\Nu\gamma_1, \Nu\gamma_2\}$ is a set of Coxeter generators of $\Gamma/\Nu$, 
then $[\Gamma, \Nu]$ corresponds to the conjugacy class of the pair of elements 
$\{\alpha(\gamma_1)_\ast\alpha^{-1} \mathrm{Inn}(\Mu), \alpha(\gamma_2)_\ast\alpha^{-1}\mathrm{Inn}(\Mu)\}$ 
of $\mathrm{Out}(\Mu)$.   
\end{theorem}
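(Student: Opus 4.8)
The plan is to follow the pattern of the proof of Theorem \ref{T:7}, using Corollary \ref{C:2} in the role that Theorem \ref{T:6} played there. First I would note that, since $\Mu$ has trivial center, Corollary \ref{C:2} gives that $\omega:\mathrm{Iso}(\Delta,\Mu)\to\mathrm{Out}(\Delta,\Mu)$ is a bijection. Next, Lemma \ref{L:7} puts $\mathrm{Out}(\Delta,\Mu)$ in one-to-one correspondence with the set of conjugacy classes of pairs of elements of $\mathrm{Out}(\Mu)$ of order $1$ or $2$ whose product has finite order, the class $[\eta]$ corresponding to the conjugacy class of $\{\eta(\delta_1),\eta(\delta_2)\}$, where $\delta_1,\delta_2$ is a set of Coxeter generators of $\Delta$. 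Composing the bijection $\omega$ with the bijection of Lemma \ref{L:7} yields the desired one-to-one correspondence between $\mathrm{Iso}(\Delta,\Mu)$ and conjugacy classes of such pairs; what remains is to trace a given class $[\Gamma,\Nu]$ through this composite.

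For the explicit formula, let $[\Gamma,\Nu]\in\mathrm{Iso}(\Delta,\Mu)$, let $\alpha:\Nu\to\Mu$ be an isomorphism, and let $\gamma_1,\gamma_2\in\Gamma$ with $\{\Nu\gamma_1,\Nu\gamma_2\}$ a set of Coxeter generators of $\Gamma/\Nu$. I would first fix a set of Coxeter generators $\delta_1,\delta_2$ of $\Delta$ and check that the assignment $\delta_i\mapsto\Nu\gamma_i$ extends to an isomorphism $\beta:\Delta\to\Gamma/\Nu$: it extends to a homomorphism because $\Delta$ is the free product of the order-$2$ groups $\langle\delta_1\rangle$ and $\langle\delta_2\rangle$ while $\Nu\gamma_i$ has order $2$, it is surjective because $\{\Nu\gamma_1,\Nu\gamma_2\}$ generates $\Gamma/\Nu$, and a surjective homomorphism of infinite dihedral groups is an isomorphism. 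Then, by the definition of $\omega$, we have $\omega([\Gamma,\Nu])=[\alpha_\#\mathcal{O}\beta]$, where $\mathcal{O}:\Gamma/\Nu\to\mathrm{Out}_E(\Nu)$ is the homomorphism induced by conjugation; so under the correspondence of Lemma \ref{L:7} the class $[\Gamma,\Nu]$ corresponds to the conjugacy class of the pair $\{(\alpha_\#\mathcal{O}\beta)(\delta_1),(\alpha_\#\mathcal{O}\beta)(\delta_2)\}$. Unwinding the definitions, $\mathcal{O}\beta(\delta_i)=\mathcal{O}(\Nu\gamma_i)=(\gamma_i)_\ast\mathrm{Inn}(\Nu)$ and $\alpha_\#((\gamma_i)_\ast\mathrm{Inn}(\Nu))=\alpha(\gamma_i)_\ast\alpha^{-1}\mathrm{Inn}(\Mu)$, which gives exactly the stated pair $\{\alpha(\gamma_1)_\ast\alpha^{-1}\mathrm{Inn}(\Mu),\alpha(\gamma_2)_\ast\alpha^{-1}\mathrm{Inn}(\Mu)\}$.

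I would close by remarking that this pair is well defined independently of the auxiliary choices: replacing $\gamma_i$ by another representative $\mu_i\gamma_i$ with $\mu_i\in\Nu$ alters $(\gamma_i)_\ast$ only by the inner automorphism $(\mu_i)_\ast$, and the choices of $\alpha$, of Coxeter generators of $\Delta$, and of $\beta$ are absorbed into the conjugacy class together with the well-definedness of $\omega$ and of the correspondence in Lemma \ref{L:7}. I do not anticipate a genuine obstacle: the argument is essentially bookkeeping through the definitions of $\omega$, of $\alpha_\#$, and of the correspondence in Lemma \ref{L:7}, the only point needing a little care being the verification that $\beta$ is an isomorphism, which rests on the Hopficity of the infinite dihedral group.
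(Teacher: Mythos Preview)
Your proposal is correct and follows essentially the same approach as the paper: invoke Corollary~\ref{C:2} to get that $\omega$ is a bijection, then chase $[\Gamma,\Nu]$ through $\omega$ and the correspondence of Lemma~\ref{L:7} by unwinding $\alpha_\#\mathcal{O}\beta$ on the Coxeter generators. The only cosmetic difference is that the paper takes an arbitrary isomorphism $\beta$ and evaluates at $\beta^{-1}(\Nu\gamma_i)$, whereas you construct $\beta$ explicitly with $\beta(\delta_i)=\Nu\gamma_i$ (your Hopficity remark is a slightly more careful justification than the paper gives).
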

\begin{proof}
The function $\omega: \mathrm{Iso}(\Delta,\Mu) \to \mathrm{Out}(\Delta,\Mu)$ is a bijection by Corollary \ref{C:2}. 
Let $[\Gamma,\Nu]\in\mathrm{Iso}(\Delta,\Mu)$, 
and let $\gamma_1,\gamma_2$ be elements of $\Gamma$ such that 
$\{\Nu\gamma_1, \Nu\gamma_2\}$ is a set of Coxeter generators of $\Gamma/\Nu$. 
Let  $\mathcal{O}: \Gamma/\Nu \to \mathrm{Out}_E(\Nu)$ 
be the homomorphism induced by the action of $\Gamma$ on $\Nu$ by conjugation. 
Let $\alpha: \Nu \to \Mu$ and $\beta:\Delta \to \Gamma/\Nu$ be isomorphisms.  
Then $\omega([\Gamma,\Nu]) = [\alpha_\#\mathcal{O}\beta]$, 
and $ [\alpha_\#\mathcal{O}\beta]$ corresponds to the conjugacy class of the pair of elements 
\begin{eqnarray*}
\{\alpha_\#\mathcal{O}\beta\beta^{-1}(\Nu\gamma_1), \alpha_\#\mathcal{O}\beta\beta^{-1}(\Nu\gamma_2)\} 
& =  & \{\alpha_\#\mathcal{O}(\Nu\gamma_1), \alpha_\#\mathcal{O}(\Nu\gamma_2)\}  \\ 
& =  & \{\alpha_\#(\gamma_1)_\ast\mathrm{Inn}(\Nu)), \alpha_\#(\gamma_2)_\ast\mathrm{Inn}(\Nu))\}  \\
& =  & \{\alpha(\gamma_1)_\ast\alpha^{-1}\mathrm{Inn}(\Mu), \alpha(\gamma_2)_\ast\alpha^{-1}\mathrm{Inn}(\Mu)\}
\end{eqnarray*}
of $\mathrm{Out}(\Mu)$ by Lemma \ref{L:7}. 
\end{proof}

Let $\Mu$ be an $(n-1)$-space group, and let $\Delta$ be an infinite dihedral 1-space group. 
When $\Mu$ has nontrivial center, the set $\mathrm{Iso}(\Delta,\Mu)$ is best understood by describing the fibers 
of  the surjection $\psi: \mathrm{Aff}(\Delta,\Mu) \to \mathrm{Iso}(\Delta,\Mu)$.

\begin{lemma}\label{L:8} 
Let $\Mu$ be an $(n-1)$-space group, and let $\Delta$ be an infinite dihedral 1-space group 
with Coxeter generators $\delta_1$ and $\delta_2$.  
The set $\mathrm{Aff}(\Delta,\Mu)$ is in one-to-one correspondence with the set 
of conjugacy classes of pairs of elements of $\mathrm{Aff}(\Mu)$ of order 1 or  2 whose product has image 
of finite order under the epimorphism $\Omega: \mathrm{Aff}(\Mu) \to \mathrm{Out}(\Mu)$. 
The element $[\eta]$ of $\mathrm{Aff}(\Delta,\Mu)$ corresponds to the conjugacy class of 
$\{\eta(\delta_1),\eta(\delta_2)\}$. 
\end{lemma}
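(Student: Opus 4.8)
\textbf{Proof proposal for Lemma \ref{L:8}.}
The plan is to mirror the structure of the proof of Lemma \ref{L:7}, but working with $\mathrm{Aff}(\Mu)$ in place of $\mathrm{Out}(\Mu)$ throughout. First I would identify $\mathrm{Hom}_f(\Delta,\mathrm{Aff}(\Mu))$ with a set of ordered pairs. Since $\Delta$ is the free product of two cyclic groups of order two generated by the Coxeter generators $\delta_1,\delta_2$, a homomorphism $\eta:\Delta\to\mathrm{Aff}(\Mu)$ is determined by the pair $(\eta(\delta_1),\eta(\delta_2))$, and any pair of elements of $\mathrm{Aff}(\Mu)$ each of order $1$ or $2$ defines such a homomorphism. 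The defining finiteness condition on $\mathrm{Hom}_f(\Delta,\mathrm{Aff}(\Mu))$ — that $\Omega\eta$ has finite image — translates, via $\Omega$ being a homomorphism and $\Delta$ being generated by $\delta_1,\delta_2$, into the requirement that the subgroup of $\mathrm{Out}(\Mu)$ generated by $\Omega\eta(\delta_1)$ and $\Omega\eta(\delta_2)$ is finite; since these two elements have order $1$ or $2$, that subgroup is infinite dihedral or finite according to whether the product $\Omega\eta(\delta_1)\Omega\eta(\delta_2)$ has infinite or finite order. Hence the condition is exactly that $\eta(\delta_1)\eta(\delta_2)$ has image of finite order under $\Omega$, giving the bijection between $\mathrm{Hom}_f(\Delta,\mathrm{Aff}(\Mu))$ and ordered pairs of the stated type.

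Next I would quotient by the two group actions. The group $\mathrm{Aff}(\Mu)$ acts on $\mathrm{Hom}_f(\Delta,\mathrm{Aff}(\Mu))$ by conjugation, which on the level of ordered pairs is simultaneous conjugation $(\beta_1,\beta_2)\mapsto(\alpha\beta_1\alpha^{-1},\alpha\beta_2\alpha^{-1})$; so $\mathrm{Aff}(\Mu)\backslash\mathrm{Hom}_f(\Delta,\mathrm{Aff}(\Mu))$ is the set of conjugacy classes of ordered pairs. Then, exactly as in Lemma \ref{L:7}, I would invoke the fact that $\mathrm{Out}(\Delta)=\mathrm{Aut}(\Delta)/\mathrm{Inn}(\Delta)$ has order $2$, generated by the unique automorphism of $\Delta$ transposing $\delta_1$ and $\delta_2$ (this automorphism is well-defined because $\{\delta_1,\delta_2\}$ is a set of Coxeter generators, and any other set of Coxeter generators is conjugate in $\Delta$, so the inner automorphisms already account for the ambiguity in the choice of Coxeter generators). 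The right action of this $\mathrm{Out}(\Delta)$ on $\mathrm{Aff}(\Mu)\backslash\mathrm{Hom}_f(\Delta,\mathrm{Aff}(\Mu))$ swaps the two coordinates of the ordered pair. Therefore $\mathrm{Aff}(\Delta,\Mu)=(\mathrm{Aff}(\Mu)\backslash\mathrm{Hom}_f(\Delta,\mathrm{Aff}(\Mu)))/\mathrm{Out}(\Delta)$ is the set of conjugacy classes of \emph{unordered} pairs of elements of $\mathrm{Aff}(\Mu)$ of order $1$ or $2$ whose product has image of finite order under $\Omega$, with $[\eta]$ corresponding to the class of $\{\eta(\delta_1),\eta(\delta_2)\}$, which is the assertion.

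The one point requiring a little care — and the closest thing to an obstacle — is the translation of the finiteness condition: I must check that "$\Omega\eta$ has finite image" is equivalent to "$\Omega\eta(\delta_1)\Omega\eta(\delta_2)$ has finite order," rather than something stronger or weaker. This follows because $\Omega\eta(\Delta)$ is generated by the two involutions $\Omega\eta(\delta_1),\Omega\eta(\delta_2)$, and a group generated by two elements of order dividing $2$ is a quotient of the infinite dihedral group, hence is finite precisely when the product of the two generators has finite order. I would also note, as in Lemma \ref{L:7}, that the automorphism of $\Delta$ transposing a fixed pair of Coxeter generators genuinely generates $\mathrm{Out}(\Delta)$; the identification of $\mathrm{Out}(\Delta)$ with $\integers/2$ and of its nontrivial element with the transposition is standard for the infinite dihedral group and is already used in the proof of Lemma \ref{L:7}, so I would simply cite that reasoning. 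Everything else is formal manipulation of orbit sets identical to the $\mathrm{Out}(\Mu)$ case already carried out in Lemma \ref{L:7}.
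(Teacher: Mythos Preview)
Your proposal is correct and follows essentially the same approach as the paper's own proof: identify $\mathrm{Hom}_f(\Delta,\mathrm{Aff}(\Mu))$ with ordered pairs via the free-product structure of $\Delta$, then quotient by the $\mathrm{Aff}(\Mu)$-conjugation action and by the order-$2$ group $\mathrm{Out}(\Delta)$ acting by transposition. If anything, you are more thorough than the paper in justifying why the finiteness condition on $\Omega\eta(\Delta)$ reduces to finiteness of the order of $\Omega(\eta(\delta_1)\eta(\delta_2))$; the paper simply asserts this step.
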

\begin{proof}
Via the mapping $\eta\mapsto (\eta(\delta_1),\eta(\delta_2))$, 
the set $\mathrm{Hom}_f(\Delta,\mathrm{Aff}(\Mu))$ is in one-to-one correspondence 
with the set of ordered pairs of elements of $\mathrm{Aff}(\Mu)$ of order 1 or 2  whose product has image 
of finite order under the epimorphism $\Omega: \mathrm{Aff}(\Mu) \to \mathrm{Out}(\Mu)$, 
since $\Delta$ is the free product of the cyclic groups of order two generated by $\delta_1$ and $\delta_2$.  
The infinite dihedral group $\Delta$ has a unique automorphism that transposes $\delta_1$ and $\delta_2$,  
and this automorphism represents the generator of the group $\mathrm{Out}(\Delta)$ of order 2. 
Therefore, via the mapping $[\eta] \mapsto [\{\eta(\delta_1), \eta(\delta_2\}]$,  
the set $\mathrm{Aff}(\Delta,\Mu)$ is in one-to-one correspondence with the set 
of conjugacy classes of unordered pairs of elements of $\mathrm{Aff}(\Mu)$ of order 1 or  2 whose product has image 
of finite order under the epimorphism $\Omega: \mathrm{Aff}(\Mu) \to \mathrm{Out}(\Mu)$. 
\end{proof}

We denote the identity maps of $E^n, E^{n-1}, E^1$ by $I, \ov I, I'$, respectively, 
and we identify $E^1$ with $(E^{n-1})^\perp$ in $E^n$. 
To simplify matters, we assume that $\Delta$ is the {\it standard} 
infinite dihedral 1-space group with Coxeter generators the reflections $\delta_1=-I'$ and $\delta_2= 1-I'$ of $E^1$.  
The next theorem gives an algebraic description of 
the fibers of the surjection  $\psi: \mathrm{Aff}(\Delta,\Mu) \to \mathrm{Iso}(\Delta,\Mu)$. 

\begin{theorem}\label{T:9}  
Let let $\Mu$ be an $(n-1)$-space group, and let 
$\Delta$ be the infinite dihedral $1$-space group generated by $\delta_1=-I'$ and $\delta_2= 1-I'$.  
Let $\eta_1,\eta_2 \in \mathrm{Hom}_f(\Delta,\mathrm{Aff}(\Mu))$, 
and let $\eta_1(\delta_i) = (a_i+A_i)_\star$, with $a_i+A_i \in \mathrm{Aff}(E^{n-1})$ 
that normalizes $\Mu$  for $i = 1,2$. 
Let $E_i$ be the $(-1)$-eigenspace of the restriction of $A_i$ to $\mathrm{Span}(Z(\Mu))$ for $i = 1,2$. 
Then the surjection $\psi: \mathrm{Aff}(\Delta,\Mu) \to \mathrm{Iso}(\Delta,\Mu)$ 
has the property that $\psi([\eta_1]) = \psi([\eta_2])$ if and only if 
there is a vector $v \in E_1\cap E_2$
such that $\{\eta_1(\delta_1), (v+\ov I)_\star \eta_1(\delta_2)\}$ is conjugate to $\{\eta_2(\delta_1),\eta_2(\delta_2)\}$ 
by an element of $\mathrm{Aff}(\Mu)$. 
\end{theorem}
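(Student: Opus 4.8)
The plan is to rewrite $\psi([\eta_1])=\psi([\eta_2])$ as the existence of a conjugating affinity of $E^n$ and then to read the stated condition off from the shape of that affinity. The feature absent in the trivial-center case (Theorem~\ref{T:5}) is that the conjugating affinity is permitted to carry a \emph{shear} mixing the base direction $V^\perp=E^1$ into $\mathrm{Span}(Z(\Mu))\subseteq V$, and it is precisely this shear that produces the vector $v$. To begin, I would use Theorem~5 of \cite{R-T-I} for the finite subgroup $\Omega\eta_1(\Delta)\cup\Omega\eta_2(\Delta)$ of $\mathrm{Out}(\Mu)$ to pick one $C\in\mathrm{GL}(n-1,\realnos)$, and, after replacing $\Mu$ by $C\Mu C^{-1}$, assume $\eta_i(\Delta)\subseteq\mathrm{Isom}(E^{n-1}/\Mu)$ for $i=1,2$. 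Writing $E^n=E^{n-1}\times E^1$ with $V=E^{n-1}$, let $\Nu$ be the copy of $\Mu$ acting as $\Mu$ on $V$ and trivially on $V^\perp$; pick isometric lifts $a_j^{(i)}+A_j^{(i)}$ of $\eta_i(\delta_j)$ normalizing $\Mu$ (with $a_j^{(1)}+A_j^{(1)}=a_j+A_j$); let $g_j^{(i)}$ act as $a_j^{(i)}+A_j^{(i)}$ on $V$ and as $\delta_j$ on $V^\perp$; and set $\Gamma_i=\langle\Nu,g_1^{(i)},g_2^{(i)}\rangle$. As in the proof of Lemma~\ref{L:3} (using Theorem~2.9 of \cite{R-T-B}) one checks that $\Gamma_i$ is an $n$-space group with $\Nu$ a complete normal subgroup, $\Gamma_i'=\Delta$, and $\Xi_i=\eta_i\Rho_i$, so $\psi([\eta_i])=[\Gamma_i,\Nu]$. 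Thus $\psi([\eta_1])=\psi([\eta_2])$ if and only if there is $\phi\in\mathrm{Aff}(E^n)$ with $\phi\Nu\phi^{-1}=\Nu$ and $\phi\Gamma_1\phi^{-1}=\Gamma_2$.

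Next I would analyze such a $\phi=b+B$. Since $\mathrm{Span}(\Nu)=V$ and $\Nu$ is complete normal in each $\Gamma_i$, Theorem~\ref{T:1} gives $BV=V$, so $B$ is determined by its restriction $\ov B$ to $V$, its induced map $B'$ on $V^\perp=E^n/V$, and a shear $S\colon V^\perp\to V$; conjugation by $\phi$ descends to the affinity $\phi'=b'+B'$ of $V^\perp$, which carries $\Gamma_1'=\Delta$ to $\Gamma_2'=\Delta$, and to the affinity $\ov\phi=\ov b+\ov B$ of $V$, which (because $\phi\Nu\phi^{-1}$ has translation lattice $\ov B\Lambda_\Mu$ and point group $\ov B\Pi_\Mu\ov B^{-1}$) normalizes $\Mu$ with $\ov B$ preserving $\mathrm{Span}(Z(\Mu))=\bigcap_{A\in\Pi_\Mu}\mathrm{Fix}(A|_V)$; the last equality I would record separately, it holds because that intersection is a rational subspace, hence spanned by the lattice vectors in it, which are exactly the translation vectors of $Z(\Mu)$. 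A short computation of $\phi\nu\phi^{-1}$ for $\nu\in\Nu$ then shows that $\phi\Nu\phi^{-1}=\Nu$ forces in addition $(I-\ov BA\ov B^{-1})S=0$ for all $A\in\Pi_\Mu$, i.e.\ $\mathrm{Image}(S)\subseteq\ov B\bigl(\bigcap_{A\in\Pi_\Mu}\mathrm{Fix}(A|_V)\bigr)=\mathrm{Span}(Z(\Mu))$.

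For the main computation I would first reduce $\phi'$. The automorphism $\phi'_\ast$ of $\Delta$ either preserves or interchanges the two conjugacy classes of reflections; in the second case I replace $\eta_2$ by its image under the nontrivial element of $\mathrm{Out}(\Delta)$ (changing neither $[\eta_2]$ nor $\psi([\eta_2])$) to return to the first, where $\phi'$ necessarily has the form $x\mapsto m\pm x$ with $m\in\integers$, so $\phi'\in\Delta$; replacing $\phi$ by $w^{-1}\phi$ for a suitable $w\in\Gamma_2$ (which alters neither $\Gamma_2$, nor $\Nu$, nor the inclusion $\mathrm{Image}(S)\subseteq\mathrm{Span}(Z(\Mu))$) I may then take $\phi'=\mathrm{id}_{V^\perp}$. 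With $\phi'=\mathrm{id}$, a direct calculation shows that $\phi g_j^{(1)}\phi^{-1}$ has $V^\perp$-part $\delta_j$ and $V$-part $(s_j+\ov I)\,\ov\phi(a_j+A_j)\ov\phi^{-1}$ with $s_1=0$ and $s_2=S(1)$, except for a term in its linear part mixing $V^\perp$ into $V$ by $-(\ov BA_j\ov B^{-1}+I)S$; since $\phi g_j^{(1)}\phi^{-1}$ lies in $\Gamma_2\subseteq\mathrm{Isom}(E^n)$, its linear part is orthogonal and block‑diagonal, so $(\ov BA_j\ov B^{-1}+I)S=0$ for $j=1,2$, which — as $\ov BA_j\ov B^{-1}$ and $S$ preserve $\mathrm{Span}(Z(\Mu))$ — means $\mathrm{Image}(S)\subseteq\ov B(E_1)\cap\ov B(E_2)=\ov B(E_1\cap E_2)$, i.e.\ $v:=\ov B^{-1}S(1)\in E_1\cap E_2$. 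Projecting $\phi g_j^{(1)}\phi^{-1}\in\Nu g_j^{(2)}$ to $\mathrm{Aff}(V/\Nu)=\mathrm{Aff}(\Mu)$ and using $\ov\phi^{-1}(S(1)+\ov I)\ov\phi=v+\ov I$ I would get $\eta_2(\delta_1)=\alpha\,\eta_1(\delta_1)\,\alpha^{-1}$ and $\eta_2(\delta_2)=\alpha\,(v+\ov I)_\star\eta_1(\delta_2)\,\alpha^{-1}$ with $\alpha=\ov\phi_\star\in\mathrm{Aff}(\Mu)$, i.e.\ the asserted conjugacy of unordered pairs. For the converse I would run this backwards: given $v\in E_1\cap E_2$ and $\alpha$ effecting the conjugacy, lift $\alpha$ to $\ov\phi=\ov b+\ov B$ normalizing $\Mu$, set $S(1)=\ov Bv\in\ov B(E_1\cap E_2)\subseteq\mathrm{Span}(Z(\Mu))$, and let $\phi$ be the affinity of $E^n$ with $V$-part $\ov\phi$, trivial $V^\perp$-part, and shear $S$; then $\mathrm{Image}(S)\subseteq\mathrm{Span}(Z(\Mu))$ yields $\phi\Nu\phi^{-1}=\Nu$, while $(\ov BA_j\ov B^{-1}+I)S(1)=\ov B(A_j+I)v=0$ makes the calculation reversible and gives $\phi\Gamma_1\phi^{-1}=\Gamma_2$, whence $\psi([\eta_1])=\psi([\eta_2])$.

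The step I expect to be the main obstacle is the bookkeeping in this last paragraph: reducing $\phi'$ to the identity cleanly, and verifying both that $\phi g_j^{(1)}\phi^{-1}$ being an isometry of $E^n$ forces $(\ov BA_j\ov B^{-1}+I)S=0$ and that the shear $S$ shifts the translation part of the lift of $\eta_1(\delta_2)$ by exactly $S(1)$ while leaving that of the lift of $\eta_1(\delta_1)$ unchanged. This asymmetry between $\delta_1$ and $\delta_2$ — coming from the translation parts $0$ and $1$ of $\delta_1=-I'$ and $\delta_2=1-I'$ — is what singles out $\delta_2$ in the statement and forces $v\in E_1\cap E_2$ rather than merely $v\in E_2$.
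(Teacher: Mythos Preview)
Your approach is sound and runs on the same underlying idea as the paper's: both turn $\psi([\eta_1])=\psi([\eta_2])$ into the existence of a conjugating affinity $\phi$ of $E^n$ and then read off the vector $v$ from the shear block of $\phi$ (your $S$, the paper's $\ov{C'}$). Where the paper invokes Lemma~3.1 and Theorem~3.3 of \cite{R-T-B} as black boxes and, for the converse, verifies their hypotheses via a crossed-homomorphism argument, you unpack everything by direct block-matrix computation; your reduction to $\phi'=\mathrm{id}$ (after absorbing an $\mathrm{Out}(\Delta)$-swap and an element of $\Gamma_2$) is a genuine simplification over the paper's bookkeeping with a general $\phi'$ and the conjugating $\delta\in\Delta$, and your derivation of $(\ov BA_j\ov B^{-1}+I)S=0$ from orthogonality of the linear part of $\phi g_j^{(1)}\phi^{-1}$ is cleaner than the appeal to Lemma~3.1 of \cite{R-T-B}.

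There is one slip in your setup: $\Omega\eta_1(\Delta)\cup\Omega\eta_2(\Delta)$ is a finite \emph{set} but not in general a subgroup of $\mathrm{Out}(\Mu)$, and the subgroup it generates can be infinite, so Theorem~5 of \cite{R-T-I} does not give you a single $C$ working for both $\eta_i$ simultaneously. The paper avoids this by choosing $C_1,C_2$ separately, so that $\Nu_i$ extends $C_i\Mu C_i^{-1}$; your argument then goes through verbatim with $\ov\phi$ replaced by $C_2^{-1}\ov\phi C_1$ (which normalizes $\Mu$ because $\phi\Nu_1\phi^{-1}=\Nu_2$) and with $v=C_1^{-1}\ov B^{-1}S(1)$, exactly as in the paper's formula $v=C_1^{-1}\ov C^{-1}\ov{C'}(e_n)$. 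This is a bookkeeping correction, not a change of strategy. A second, smaller point: in your converse you should say explicitly that if the unordered-pair conjugacy involves the swap $\eta_1(\delta_1)\leftrightarrow\eta_2(\delta_2)$, you either take $\phi'=\tfrac12-I'$ rather than $\phi'=\mathrm{id}$ (as the paper does with its $\beta$), or first replace $\eta_2$ by $\eta_2\circ\sigma$ as you did in the forward direction.
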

\begin{proof}
By Theorem 5 of \cite{R-T-I}, there exists $C_i\in \mathrm{GL}(n-1,\realnos)$ such that $C_i\Mu C_i^{-1}$ is an $(n-1)$-space group 
and $(C_i)_\sharp\eta_i(\Delta) \subseteq \mathrm{Isom}(E^{n-1}/C_i\Mu C_i^{-1})$ for $i=1,2$. 
Extend  $C_i\Mu C_i^{-1}$ to a subgroup $\Nu_i$ of $\mathrm{Isom}(E^n)$ such that 
the point group of $\Nu_i$ acts trivially on $(E^{n-1})^\perp$ for $i = 1,2$. 
By Theorem 2.9 of \cite{R-T-B}, there exists an $n$-space group $\Gamma_i$ containing $\Nu_i$ as a complete normal subgroup 
such that $\Gamma_i' = \Delta$ and if $\Xi_i: \Gamma_i/\Nu_i \to \mathrm{Isom}(E^{n-1}/\Nu_i)$ is the homomorphism 
induced by the action of $\Gamma_i/\Nu_i$ on $E^{n-1}/\Nu_i$, then $\Xi_i = (C_i)_\sharp\eta_i\Rho_i$ 
where $\Rho_i:\Gamma_i/\Nu_i \to \Gamma_i'$ is the isomorphism defined by $\Rho_i(\Nu_i\gamma) = \gamma'$. 
Then $\psi([\eta_i]) = [\Gamma_i,\Nu_i]$ for $i=1,2$. 

Suppose $\psi([\eta_1]) = \psi([\eta_2])$.  
Then $[\Gamma_1,\Nu_1] = [\Gamma_2,\Nu_2]$. 
By Bieberbach's theorem, there exists an affinity $\phi = c + C$ of $E^n$ such that 
$\phi(\Gamma_1,\Nu_1)\phi^{-1} = (\Gamma_2,\Nu_2)$. 
Then $C(E^{n-1}) = E^{n-1}$, since $\phi(\Nu_1) = \Nu_2$.  
Let $\overline{C}:E^{n-1} \to E^{n-1}$ be the linear automorphism obtained by restricting $C$. 
Let $\overline{C'}:(E^{n-1})^\perp \to E^{n-1}$ and $C': (E^{n-1})^\perp \to (E^{n-1})^\perp$ 
be the linear transformations obtained by restricting $C$ to $(E^{n-1})^\perp$ followed 
by orthogonal projections to $E^{n-1}$ and $(E^{n-1})^\perp$ respectively. 
Write $c = \overline{c}+c'$ with $\overline{c}\in E^{n-1}$ and $c' \in (E^{n-1})^\perp$. 
Let $\overline{\phi}: E^{n-1} \to E^{n-1}$ and $\phi':(E^{n-1})^\perp \to (E^{n-1})^\perp$ 
be the affine transformations defined by $\overline{\phi} = \overline{c}+\overline{C}$ 
and $\phi' = c'+ C'$. 

By Theorem 3.3 in \cite{R-T-B}, we have that 
$$\Xi_2\Rho_2^{-1}(\phi')_\ast\Rho_1 = (\ov{C'}p_1)_\star(\ov\phi)_\sharp\Xi_1.$$
Hence, we have 
$$(C_2)_\sharp\eta_2(\phi')_\ast= ((\ov{C'}p_1)_\star\Rho_1^{-1})(\ov\phi)_\sharp(C_1)_\sharp\eta_1,$$
Now $(\phi')_\star(\{\delta_1,\delta_2\} )$ is a set of Coxeter generators of $\Delta$, 
and hence there exists $\delta \in \Delta$ such that $(\phi')_\star(\{\delta_1,\delta_2\} )= \delta\{\delta_1,\delta_2\}\delta^{-1}$, 
and say that $(\phi')_\star(\delta_1) = \delta\delta_j\delta^{-1}$ and $(\phi')_\star(\delta_2) = \delta\delta_k\delta^{-1}$.  
Upon evaluating at $\delta_1$ and $\delta_2$, we have 
$$(C_2)_\star\eta_2(\delta\delta_j\delta^{-1})(C_2)_\star^{-1} = (\ov\phi C_1)_\star \eta_1(\delta_1)(\ov\phi C_1)_\star^{-1}, $$
and 
$$(C_2)_\star\eta_2(\delta\delta_k\delta^{-1})(C_2)_\star^{-1} = 
(\ov{C'}(e_n) + \ov I)_\star(\ov\phi C_1)_\star \eta_1(\delta_2)(\ov\phi C_1)_\star^{-1}. $$
Therefore 
$$\eta_2(\delta_j) =
\eta_2(\delta)^{-1}(C_2^{-1}\ov\phi C_1)_\star \eta_1(\delta_1)(C_2^{-1}\ov\phi C_1)_\star^{-1}\eta_2(\delta), $$
and 
$$\eta_2(\delta_k) =
\eta_2(\delta)^{-1}(C_2^{-1}\ov\phi C_1)_\star(\ov \phi C_1)_\star^{-1}(\ov{C'}(e_n)+\ov I)_\star(\ov\phi C_1)_\star \eta_1(\delta_2)(C_2^{-1}\ov\phi C_1)_\star^{-1}\eta_2(\delta).$$
Now, we have that 
$$(\ov \phi C_1)_\star^{-1}(\ov{C'}(e_n)+\ov I)_\star(\ov\phi C_1)_\star  = (C_1^{-1}\ov C^{-1}\ov{C'}(e_n)+\ov I)_\star.$$
By Lemma 3.1 of \cite{R-T-B}, we have that 
$\ov{C'}(e_n) \in \mathrm{Span}(Z(\Nu_2))$. 
Hence, we have that $\ov{C}^{-1}\ov{C'}(e_n) \in \mathrm{Span}(Z(\Nu_1))$, 
and so $C_1^{-1}\ov{C}^{-1}\ov{C'}(e_n) \in \mathrm{Span}(Z(\Mu))$.

Next, we have that 
$$(C_1)_\sharp\eta_1(\delta_i) = (C_1a_i + C_1A_iC_1^{-1})_\star$$
for $i =1,2$. 
Let $\hat \delta_i = b_i + B_i \in \Gamma_1$ be formed from $C_1a_i+ C_1A_iC_1^{-1}$ and $\delta_i$, 
as in the beginning of the proof of Theorem 2.9 of \cite{R-T-B}, 
so that $\hat\delta_i' = \delta_i$ for $i =1,2$.  
Then $\ov B_i = C_1A_iC_1^{-1}$ and $B_i' = -I'$ for $i = 1,2$, and $b_1' = 0$ and $b_2' = e_n$. 
By Lemma 3.1 of \cite{R-T-B}, we have that 
$\ov{C'}B_i' = \ov C \ov B_i \ov C^{-1} \ov{C'}$. 
Hence $\ov{C'}(e_n)$ is in the $(-1)$-eigenspace of $\ov C \ov B_i \ov C^{-1}$. 
Therefore $\ov C^{-1}\ov{C'}(e_n)$ is in the $(-1)$-eigenspace of $\ov B_i$, 
and so $C_1^{-1}\ov C^{-1}\ov{C'}(e_n)$ is in the $(-1)$-eigenspace of $A_i$ for $i = 1,2$.  
Now let $v = C_1^{-1}\ov C^{-1}\ov{C'}(e_n)$.  Then $v \in E_1\cap E_2$, and we have that 
$\{\eta_1(\delta_1), (v+\ov I)_\star \eta_1(\delta_2)\}$ 
is conjugate to $\{\eta_2(\delta_1),\eta_2(\delta_2)\}$ by the element $\eta_2(\delta)^{-1}(C_2^{-1}\ov\phi C_1)_\star$ 
of $\mathrm{Aff}(\Mu)$. 

Conversely, suppose there is a vector $v \in E_1\cap E_2$
such that $\{\eta_1(\delta_1), (v+\ov I)_\star \eta_1(\delta_2)\}$ is conjugate to $\{\eta_2(\delta_1),\eta_2(\delta_2)\}$ 
by an element of $\mathrm{Aff}(\Mu)$. 
Let $\xi\in \mathrm{Aff}(E^{n-1})$ such that $\xi\Mu\xi^{-1} = \Mu$ and 
$\eta_2(\delta_j) = \xi_\star \eta_1(\delta_1)\xi_\star^{-1}$ and $\eta_2(\delta_k) =
 \xi_\star(v+\ov I)_\star\eta_1(\delta_2)\xi_\star^{-1}$ with $\{j,k\} = \{1,2\}$. 
Define $\alpha \in \mathrm{Aff}(E^{n-1})$ by $\alpha = C_2\xi C_1^{-1}$.  
Then $\alpha C_1\Mu C_1^{-1}\alpha^{-1} = C_2\Mu C_2^{-1}$, and so $\alpha\ov\Nu_1\alpha^{-1} = \ov \Nu_2$. 
Let $\beta \in\mathrm{Aff}((E^{n-1})^\perp)$ be either the identity map $I'$ if $j = 1$ 
or the reflection $1/2-I'$ if $k = 1$. 
Then $\beta_\ast$ is the automorphism of $\Delta$ that maps $(\delta_1,\delta_2)$ to $(\delta_j, \delta_k)$. 

Write $\xi = a + A$ with $a \in E^{n-1}$ and $A \in \mathrm{GL}(n-1,\realnos)$. 
Define a linear transformation $D: (E^{n-1})^\perp \to \mathrm{Span}(Z(\Nu_2))$ 
by $D(e_n) = C_2 A v$. 
Let $\phi = c+C \in \mathrm{Aff}(E^n)$ be such that $\ov \phi = \alpha$, and $\phi' = \beta$, and $\ov{C'} = D$. 
Then $\ov C = C_2AC_1^{-1}$.  
For $\hat\delta_i=b_i+B_i$, we have that $\ov B_i = C_1A_iC_1^{-1}$ for $i = 1,2$. 
Now $v$ is in the $(-1)$-eigenspace of $A_i$ for $i= 1,2$, 
and so $Av$ is in the $(-1)$-eigenspace of $AA_iA^{-1}$ for $i= 1,2$. 
Hence $C_2Av$ is in the $(-1)$-eigenspace of $C_2AA_iA^{-1}C_2^{-1}$ for $i= 1,2$.
Therefore $C_2Av$ is in the $(-1)$-eigenspace of $\ov C\ov B_i\ov C^{-1}$ for $i= 1,2$. 
Hence $DB_i' = \ov C\ov B_i\ov C^{-1}D$ for $i = 1,2$. 
As $\Nu_1\hat\delta_1$ and $\Nu_1\hat\delta_2$ generate $\Gamma_1/\Nu_1$, 
we have that if $b+B \in \Gamma_1$, then $DB' = \ov C \ov B\ov C^{-1}D$. 

Observer that 
\begin{eqnarray*}
\Xi_2\Rho_2^{-1}\beta_\ast\Rho_1(\Nu_1\hat\delta_1) 
& = & \Xi_2\Rho_2^{-1}\beta_\ast(\delta_1) \\
& = & (C_2)_\sharp \eta_2(\delta_j) \\ 
& = & (C_2)_\sharp \xi_\star \eta_1(\delta_1)\xi_\star^{-1} \\
& = & (C_2)_\sharp\xi_\sharp (C_1)_\sharp^{-1} (C_1)_\sharp\eta_1\Rho_1\Rho_1^{-1}(\delta_1) \\
& = & \alpha_\sharp\Xi_1(\Nu_1\hat\delta_1) \ \
 = \ \  (Dp_1)_\star(\Nu_1\hat\delta_1)\alpha_\sharp\Xi_1(\Nu_1\hat\delta_1),  
\end{eqnarray*}
and
\begin{eqnarray*}
\Xi_2\Rho_2^{-1}\beta_\ast\Rho_1(\Nu_1\hat\delta_2) 
& = & \Xi_2\Rho_2^{-1}\beta_\ast(\delta_2) \\
& = & (C_2)_\sharp \eta_2(\delta_k) \\ 
& = & (C_2)_\sharp \xi_\star(v+\ov I)_\star  \eta_1(\delta_2)\xi_\star^{-1} \\
& = & (C_2)_\star \xi_\star(v+\ov I)_\star \xi_\star^{-1}(C_2)_\star^{-1}(C_2)_\star\xi_\star\eta_1(\delta_2)\xi_\star^{-1}(C_2)_\star^{-1} \\
& = & (C_2 A v+\ov I)_\star (C_2)_\star\xi_\star(C_1)_\star^{-1} (C_1)_\star\eta_1(\delta_2)\xi_\star^{-1}(C_2)_\star^{-1} \\
& = & (De_n+\ov I)_\star\alpha_\star (C_1)_\star \eta_1(\delta_2)(C_1)^{-1}_\star \alpha_\star^{-1} \\
& = &  (Db_2'+\ov I)_\star\alpha_\sharp (C_1)_\sharp\eta_1\Rho_1\Rho_1^{-1}(\delta_2) \\
& = & (Dp_1)_\star(\Nu_1\hat\delta_2)\alpha_\sharp\Xi_1(\Nu_1\hat\delta_2).   
\end{eqnarray*}

Let $\mathcal{K}_2$ be the connected component of the identity of $\mathrm{Isom}(E^{n-1}/\Nu_2)$. 
Then $\mathcal{K}_2$ is the kernel of the epimorphism $\Omega: \mathrm{Isom}(E^{n-1}/\Nu_2) \to \mathrm{Out}_E(\Nu_2)$ 
by Theorem 2 of \cite{R-T-I}.  Upon applying $\Omega$, we have that
$$\Omega\Xi_2\Rho_2^{-1}\beta_\ast\Rho_1(\Nu_1\hat\delta_1) = \Omega\alpha_\sharp\Xi_1(\Nu_1\hat\delta_1),$$
and by Theorem 1 of \cite{R-T-I} that 
$$\Omega\Xi_2\Rho_2^{-1}\beta_\ast\Rho_1(\Nu_1\hat\delta_2) = \Omega\alpha_\sharp\Xi_1(\Nu_1\hat\delta_2).$$
Hence $\Omega\Xi_2\Rho_2^{-1}\beta_\ast\Rho_1 = \Omega\alpha_\sharp\Xi_1$, 
since $\Nu_1\hat\delta_1$ and $\Nu_1\hat\delta_2$ generate $\Gamma_1/\Nu_1$. 
Therefore, the ratio of homomorphisms $(\Xi_2\Rho_2^{-1}\beta_\ast\Rho_1)(\alpha_\sharp\Xi_1)^{-1}$ 
maps to the abelian group $\mathcal{K}_2$. 
Hence $(\Xi_2\Rho_2^{-1}\beta_\ast\Rho_1)(\alpha_\sharp\Xi_1)^{-1}: \Gamma_1/\Nu_1 \to \mathcal{K}_2$ 
is a crossed homomorphism with $\Nu_1(b+B)$ acting on $\mathcal{K}_2$ by conjugation by 
\begin{eqnarray*}
\Xi_2\Rho_2^{-1}\beta_\ast\Rho_1(\Nu_1(b+B)) 
& = & \Xi_2\Rho_2^{-1}\beta_\ast(b'+B') \\
& = & \Xi_2\Rho_2^{-1}((c'+C')(b'+B')(c'+C')^{-1})\\
& = & \Xi_2(\Nu_2(c+C)(b+B)(c+C)^{-1}) \\
& = & ((\ov c+\ov C)(\ov b+\ov B)(\ov c+\ov C)^{-1})_\star, 
\end{eqnarray*} 
that is, if $u \in \mathrm{Span}(Z(\Nu_2))$, 
then 
$$(\Nu_1(b+B))(u+I)_\star = (\ov C\ov B\ov C^{-1}u + I)_\star.$$
The mapping  $(Dp_1)_\star: \Gamma_1/\Nu_1 \to \mathcal{K}_2$ is a crossed homomorphism 
with respect to the same action of $\Gamma_1/\Nu_1$ on $\mathcal{K}_2$ 
by Theorem 3.3 of \cite{R-T-B}. 
Hence, the crossed homomorphisms $(\Xi_2\Rho_2^{-1}\beta_\ast\Rho_1)(\alpha_\sharp\Xi_1)^{-1}$ 
and $(Dp_1)_\star$ are equal, since they agree on the generators 
$\Nu_1\hat\delta_1$ and $\Nu_1\delta_2$ of $\Gamma_1/\Nu_1$. 
Therefore 
$$\Xi_2\Rho_2^{-1}\beta_\ast\Rho_1= (Dp_1)_\star\alpha_\sharp\Xi_1.$$
Hence $\phi(\Gamma_1,\Nu_1)\phi^{-1} = (\Gamma_2,\Nu_2)$ by Theorem 3.3 of \cite{R-T-B}. 
Therefore, we have that 
$$\psi([\eta_1]) = [\Gamma_1,\Nu_1] = [\Gamma_2,\Nu_2] = \psi([\eta_2]).$$

\vspace{-.2in}
\end{proof}

\begin{theorem}\label{T:10} 
Let let $\Mu$ be an $(n-1)$-space group, and let 
$\Delta$ be the infinite dihedral $1$-space group generated by $\delta_1=-I'$ and $\delta_2= 1-I'$.  
Let $[\Gamma,\Nu] \in \mathrm{Iso}(\Delta,\Mu)$,  let $V =  \mathrm{Span}(\Nu)$, 
and let $\alpha: V \to E^{n-1}$ be an affinity such that $\alpha\ov \Nu \alpha^{-1} =  \Mu$. 
Let $\{\Nu\gamma_1,\Nu\gamma_2\}$ be a set of Coxeter generators of $\Gamma/\Nu$, and 
let $\beta:\Delta \to \Gamma/\Nu$ be the isomorphism defined by $\beta(\delta_i) = \Nu\gamma_i$ for $i = 1, 2$. 
Let $\Xi: \Gamma/\Nu \to \mathrm{Isom}(V/\Nu)$ be the homomorphism induced by the action of $\Gamma/\Nu$ 
on $V/\Nu$. 
Write $\gamma_i = b_i + B_i$ with $b_i \in E^n$ and $B_i \in \mathrm{O}(n)$ for $i = 1, 2$, and  
let $E_i$ be the $(-1)$-eigenspace of $B_i$ restricted to $\mathrm{Span}(Z(\Nu))$ for $i = 1, 2$. 
Then $[\alpha_\sharp\Xi\beta] \in \psi^{-1}([\Gamma,\Nu])$,   
and $[\alpha_\sharp\Xi\beta]$ corresponds to the conjugacy class of the pair of elements 
$\{(\alpha\ov\gamma_1\alpha^{-1})_\star, (\alpha\ov\gamma_2\alpha^{-1})_\star\}$ of $\mathrm{Aff}(\Mu)$. 
Moreover $\psi^{-1}([\Gamma,\Nu])$ is the set of all the elements  of $\mathrm{Aff}(\Delta,\Mu)$ 
that correspond to the conjugacy class of the pair of elements 
$\{(\alpha\ov\gamma_1\alpha^{-1})_\star, (\alpha(v+\ov I)\ov\gamma_2\alpha^{-1})_\star\}$
of $\mathrm{Aff}(\Mu)$ for any $v \in E_1\cap E_2$. 
In particular, if $E_1\cap E_2 =\{0\}$, then $\psi^{-1}([\Gamma,\Nu]) = \{[\alpha_\sharp\Xi\beta]\}$. 
\end{theorem}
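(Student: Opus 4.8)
The plan is to reduce the theorem to Theorem \ref{T:9}, using the surjectivity construction in the proof of Lemma \ref{L:3} to realize $[\Gamma,\Nu]$ as $\psi([\alpha_\sharp\Xi\beta])$, and Lemma \ref{L:8} to pass between $\mathrm{Hom}_f(\Delta,\mathrm{Aff}(\Mu))$ and conjugacy classes of pairs in $\mathrm{Aff}(\Mu)$. First I would observe that $\eta:=\alpha_\sharp\Xi\beta$ lies in $\mathrm{Hom}_f(\Delta,\mathrm{Aff}(\Mu))$ and satisfies $\psi([\eta])=[\Gamma,\Nu]$: these are precisely the two facts established in the course of proving that $\psi$ is onto in Lemma \ref{L:3}, that argument being run with the pair $(\Gamma,\Nu)$, the affinity $\alpha\colon V\to E^{n-1}$ in the role of $\tilde\alpha$ there, and the isomorphism $\beta^{-1}\colon\Gamma/\Nu\to\Delta$. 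This already gives $[\alpha_\sharp\Xi\beta]\in\psi^{-1}([\Gamma,\Nu])$. Writing $\alpha=p+P$ with $P$ a linear isomorphism $V\to E^{n-1}$, I would then compute $\eta(\delta_i)=\alpha_\sharp\Xi(\Nu\gamma_i)=\alpha_\star(\ov\gamma_i)_\star\alpha_\star^{-1}=(\alpha\ov\gamma_i\alpha^{-1})_\star$; the affinity $\alpha\ov\gamma_i\alpha^{-1}$ normalizes $\Mu$ because $\ov\gamma_i$ normalizes $\ov\Nu$ and $\alpha\ov\Nu\alpha^{-1}=\Mu$, and it has order $1$ or $2$ since $\delta_i$ does. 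By Lemma \ref{L:8}, $[\alpha_\sharp\Xi\beta]$ corresponds to the conjugacy class of $\{(\alpha\ov\gamma_1\alpha^{-1})_\star,(\alpha\ov\gamma_2\alpha^{-1})_\star\}$, which is the $v=0$ instance of the asserted family.

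To describe the whole fibre I would apply Theorem \ref{T:9} with $\eta_1=\eta$ and $\eta_2=\eta'$ an arbitrary element of $\mathrm{Hom}_f(\Delta,\mathrm{Aff}(\Mu))$: thus $\psi([\eta'])=[\Gamma,\Nu]$ if and only if there is $w\in E_1'\cap E_2'$, where $E_i'$ is the $(-1)$-eigenspace on $\mathrm{Span}(Z(\Mu))$ of the linear part of $\alpha\ov\gamma_i\alpha^{-1}$, such that $\{\eta(\delta_1),(w+\ov I)_\star\eta(\delta_2)\}$ is conjugate in $\mathrm{Aff}(\Mu)$ to $\{\eta'(\delta_1),\eta'(\delta_2)\}$. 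The crux is the identification $E_i'=P(E_i)$. The linear part of $\alpha\ov\gamma_i\alpha^{-1}$ is $P\ov B_iP^{-1}$. Since $Z(\Nu)$ is characteristic in $\Nu$ and $\Nu$ is normal in $\Gamma$, the subgroup $Z(\Nu)$ is normal in $\Gamma$; its elements are translations with vectors in $V$ (they centralize the translations of $\Nu$, which span $V$, and fix $V^\perp$ pointwise by Theorem \ref{T:1}(2)), so $\mathrm{Span}(Z(\Nu))=\mathrm{Span}(Z(\ov\Nu))\subseteq V$ and this subspace is invariant under the point group of $\Gamma$; hence $B_i$ restricts on it to $\ov B_i$. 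As $\alpha$ conjugates $\ov\Nu$ to $\Mu$, its linear part $P$ carries $\mathrm{Span}(Z(\ov\Nu))$ isomorphically onto $\mathrm{Span}(Z(\Mu))$ and intertwines $B_i|_{\mathrm{Span}(Z(\Nu))}$ with $(P\ov B_iP^{-1})|_{\mathrm{Span}(Z(\Mu))}$, whence $E_i'=P(E_i)$ and $E_1'\cap E_2'=P(E_1\cap E_2)$. Putting $w=Pv$ with $v\in E_1\cap E_2$ and using $\alpha\circ(v+\ov I)\circ\alpha^{-1}=Pv+\ov I$, one gets $(w+\ov I)_\star\eta(\delta_2)=(\alpha(v+\ov I)\ov\gamma_2\alpha^{-1})_\star$, so by Lemma \ref{L:8} the relation $\psi([\eta'])=[\Gamma,\Nu]$ is equivalent to $[\eta']$ corresponding to the conjugacy class of $\{(\alpha\ov\gamma_1\alpha^{-1})_\star,(\alpha(v+\ov I)\ov\gamma_2\alpha^{-1})_\star\}$ for some $v\in E_1\cap E_2$. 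That is the claimed description of $\psi^{-1}([\Gamma,\Nu])$, and if $E_1\cap E_2=\{0\}$ only $v=0$ remains, giving $\psi^{-1}([\Gamma,\Nu])=\{[\alpha_\sharp\Xi\beta]\}$.

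The main obstacle, beyond this bookkeeping, is making the eigenspace identification airtight: one must check that $\alpha$ transports the $\mathrm{Span}(Z(\Mu))$-eigenspace data furnished by Theorem \ref{T:9} precisely onto the $\mathrm{Span}(Z(\Nu))$-eigenspace data $E_i$ named in the statement, and that $E_i$ (hence the whole family of conjugacy classes) is independent of the choices made. For the latter, replacing $\gamma_i$ by $\nu\gamma_i$ with $\nu\in\Nu$ multiplies $B_i$ by the linear part of $\nu$, which acts as the identity on $\mathrm{Span}(Z(\Nu))$ because every element of $\Nu$ centralizes $Z(\Nu)$, so $E_i$ is unchanged; and replacing $\{\Nu\gamma_1,\Nu\gamma_2\}$ by a conjugate set of Coxeter generators of $\Gamma/\Nu$ merely conjugates the pair $\{(\alpha\ov\gamma_1\alpha^{-1})_\star,(\alpha\ov\gamma_2\alpha^{-1})_\star\}$ in $\mathrm{Aff}(\Mu)$. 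With these points secured, the three assertions of the theorem follow by combining Lemma \ref{L:3}, Lemma \ref{L:8}, and Theorem \ref{T:9} as above.
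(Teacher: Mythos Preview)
Your proposal is correct and follows essentially the same route as the paper: invoke the surjectivity argument of Lemma~\ref{L:3} to get $[\alpha_\sharp\Xi\beta]\in\psi^{-1}([\Gamma,\Nu])$, compute $\eta(\delta_i)=(\alpha\ov\gamma_i\alpha^{-1})_\star$ and apply Lemma~\ref{L:8}, then transport the eigenspace condition of Theorem~\ref{T:9} along the linear part of $\alpha$ (your $P$, the paper's $A$) to identify $E_i'=P(E_i)$ and hence $E_1'\cap E_2'=P(E_1\cap E_2)$. The only cosmetic difference is that the paper also verifies directly that for each $v\in E_1\cap E_2$ the modified pair $\{(\alpha\ov\gamma_1\alpha^{-1})_\star,(\alpha(v+\ov I)\ov\gamma_2\alpha^{-1})_\star\}$ satisfies the order and finiteness hypotheses of Lemma~\ref{L:8} (so genuinely represents a class in $\mathrm{Aff}(\Delta,\Mu)$), whereas you obtain this implicitly by parametrizing the fibre from the $\eta'$ side; your added remarks on why $E_i$ is independent of the coset representative of $\Nu\gamma_i$ are a welcome supplement not spelled out in the paper.
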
 
\begin{proof}
By the surjective part of the proof of Lemma \ref{L:3}, 
we have that $[\alpha_\sharp\Xi\beta] \in \psi^{-1}([\Gamma,\Nu])$. 
For $i = 1,2$, we have that 
$$\alpha_\sharp \Xi \beta(\delta_i) =\alpha_\sharp\Xi(\Nu\gamma_i) = 
 \alpha_\sharp((\ov \gamma_i)_\star) = \alpha_\star(\ov \gamma_i)_\star\alpha_\star^{-1} 
 = (\alpha\ov\gamma_i\alpha^{-1})_\star.$$
Hence $[\alpha_\sharp\Xi\beta]$ corresponds to the conjugacy class of the pair 
$\{(\alpha\ov\gamma_1\alpha^{-1})_\star, (\alpha\ov\gamma_2\alpha^{-1})_\star\}$ by Lemma \ref{L:8}. 
Write $\alpha = a + A$ with $a \in E^{n-1}$ and $A: V \to E^{n-1}$ a linear isomorphism. 
Then for $i = 1, 2$, we have 
$$\alpha\ov \gamma_i\alpha^{-1} = (a+A)(\ov b_i + \ov B_i)(a+A)^{-1} = A\ov b_i + (I-A\ov B_iA^{-1})a + A\ov B_iA^{-1}.$$
The $(-1)$-eigenspace of $A\ov B_iA^{-1}$ restricted to $\mathrm{Span}(Z(\Mu))$ is $A(E_i)$ for $i = 1,2$. 

Suppose $v \in E_1\cap E_2$. Then  $Av \in A(E_1)\cap A(E_2)$ and 
$$\alpha(v+\ov I)\ov\gamma_2\alpha^{-1} = 
\alpha(v+\ov I)\alpha^{-1}\alpha\ov\gamma_2\alpha^{-1} = (Av + \ov I)\alpha\ov\gamma_2\alpha^{-1}.$$
Now, we have that 
\begin{eqnarray*}
((Av + \ov I)\alpha\ov\gamma_2\alpha^{-1})_\star^2 
& =  & ((Av + \ov I)\alpha\ov\gamma_2\alpha^{-1}(Av + \ov I)\alpha\ov\gamma_2\alpha^{-1})_\star \\
& =  & ((Av + \ov I)(-Av+\ov I) \alpha\ov\gamma_2\alpha^{-1} \alpha\ov\gamma_2\alpha^{-1})_\star 
\ \  =\ \  \ov I_\star.
\end{eqnarray*}
Hence, the order of  $((Av + \ov I)\alpha\ov\gamma_2\alpha^{-1})_\star$ is at most 2. 
Moreover the element
$$\Omega((\alpha\ov\gamma_1\alpha^{-1})_\star((Av + \ov I)\alpha\ov\gamma_2\alpha^{-1})_\star)$$
of $\mathrm{Out}(\Mu)$ has finite order, since 
$$\Omega((\alpha\ov\gamma_1\alpha^{-1})_\star((Av + \ov I)\alpha\ov\gamma_2\alpha^{-1})_\star) 
= \Omega((\alpha\ov\gamma_1\alpha^{-1})_\star(\alpha\ov\gamma_2\alpha^{-1})_\star).$$
Therefore, the conjugacy class of the pair of elements 
$\{(\alpha\ov\gamma_1\alpha^{-1})_\star, (\alpha(v+\ov I)\ov\gamma_2\alpha^{-1})_\star\}$
of $\mathrm{Aff}(\Mu)$ correspond to an element of  $\psi^{-1}([\Gamma,\Nu])$ by Lemma \ref{L:8} and Theorem \ref{T:9}. 
Thus  $\psi^{-1}([\Gamma,\Nu])$ is the set of all the elements of $\mathrm{Aff}(\Delta,\Mu)$ 
that correspond to the conjugacy class of the pair of elements 
$\{(\alpha\ov\gamma_1\alpha^{-1})_\star, (\alpha(v+\ov I)\ov\gamma_2\alpha^{-1})_\star\}$
of $\mathrm{Aff}(\Mu)$ for any $v \in E_1\cap E_2$ by Lemma \ref{L:8} and Theorem \ref{T:9}. 
\end{proof}

\noindent{\bf Example 1.}  
Let $e_1$ and $e_2$ be the standard basis vectors of $E^2$. 
Let $\Gamma$ be the group generated by $t_1 = e_1+I$ and $t_2= e_2+I$ and $-I$.  
Then $\Gamma$ is a 2-space group, and $E^2/\Gamma$ is a pillow. 
Let $\Nu=\langle t_1\rangle$.  
Then $\Nu$ is a complete normal subgroup of $\Gamma$, 
with $V = \mathrm{Span}(\Nu) = \mathrm{Span}\{e_1\}$.  
The quotient $\Gamma/\Nu$ is an infinite dihedral group 
generated by $\Nu t_2$ and $\Nu(-I)$. 
Let $\gamma_1 = -I$ and $\gamma_2 = e_2 - I$. 
Then $\Nu\gamma_1$ and $\Nu\gamma_2$ are Coxeter generators of $\Gamma/\Nu$. 

Let $\Delta$ be the standard infinite dihedral group, 
and let $\Mu$ be the standard infinite cyclic 1-space group generated by $\ov t_1 = e_1 + \ov I$. 
By Theorem \ref{T:10}, we have that $\psi^{-1}([\Gamma,\Nu])$ 
consists of all the elements $[\eta] \in \mathrm{Aff}(\Delta,\Mu)$ 
that correspond to the conjugacy class of the pair of elements 
$\{(\ov \gamma_1)_\star, ((v+\ov I)\ov \gamma_2)_\star\}$ of $\mathrm{Isom}(E^1/\Mu)$  for any $v \in E^1$. 
Here $\ov \gamma_1 =\ov \gamma_2 = -\ov I$. 

The reflections $(\ov \gamma_1)_\star$ and $((v+\ov I)\ov \gamma_1)_\star$ 
of the circle $E^1/\Mu$  lie in the same  
connected component $\mathcal{K}$ of the Lie group $\mathrm{Isom}(E^1/\Mu)$.  
Define a metric on $\mathcal{K}$ so that $\xi: E^1/\Mu \to \mathcal{K}$ defined by $\xi(\Mu v) = ((v+\ov I)\ov \gamma_1)_\star$ is an isometry. 
Conjugating by an element of $\mathrm{Isom}(E^1/\Mu)$ is an isometry of $\mathcal{K}$ with respect to this metric. 
Hence, the distance between $(\ov \gamma_1)_\star$ and $((v+\ov I)\ov \gamma_1)_\star$
is an invariant of the conjugacy class of the pair $\{(\ov \gamma_1)_\star, ((v+\ov I)\ov \gamma_1)_\star\}$. 
If $0\leq v \leq 1/2$, 
then the distance between $(\ov \gamma_1)_\star$ and $((v+\ov I)\ov \gamma_1)_\star$ is $v$,  
and so $\psi^{-1}([\Gamma,\Nu])$ has uncountably many elements. 

Let $v \in \realnos$ with $0 \leq v < 1$,  
and let $\Gamma_v$ be the group generated by $t_1 = e_1+I$ and $t_2 = ve_1+e_2+I$ and $-I$.  
Then $\Gamma_v$ is a 2-space group, and $E^2/\Gamma_v$ is a pillow. 
Let $\Nu=\langle t_1\rangle$.  
Then $\Nu$ is a complete normal subgroup of $\Gamma_v$, 
and $\Gamma_v/\Nu$ is  an infinite dihedral group. 
Let $\gamma_1 = -I$ and $\gamma_2 = ve_1+e_2-I$. 
Then $\Nu\gamma_1$ and $\Nu\gamma_2$ are Coxeter generators of $\Gamma_v/\Nu$. 
Let $\beta_v: \Delta \to \Gamma_v/\Nu$ be the isomorphism defined by $\beta_v(\delta_i) = \gamma_i$ for $i = 1,2$. 
Let $V = \mathrm{Span}(\Nu) = \mathrm{Span}\{e_1\}$, and 
let $\Xi_v: \Gamma_v/\Nu \to \mathrm{Isom}(V/\Nu)$ be the homomorphism induced 
by the action of $\Gamma_v/\Nu$ on $V/\Nu$. 
Then $[\Xi_v\beta_v] \in \psi^{-1}([\Gamma_v,\Nu])$ and $[\Xi_v\beta_v]$ 
corresponds to the pair of elements 
$$\{(\ov\gamma_1)_\star, (\ov\gamma_2)_\star\} = \{(\ov\gamma_1)_\star, ((v+I)\ov\gamma_1)_\star\}$$
of $\mathrm{Isom}(E^1/\Mu)$ by Theorem \ref{T:10}. 
Hence $[\Gamma_v,\Nu] = [\Gamma,\Nu]$ by Theorem \ref{T:9}. 

Let $\Kappa$ be the kernel of the action of $\Gamma_v$ on $V/\Nu$. 
The structure group $\Gamma_v/\Nu\Kappa$ is a dihedral group 
generated by $\Nu\Kappa t_2$ and $\Nu\Kappa (-I)$ ,  
The element $\Nu\Kappa t_2$ acts on the circle $V/\Nu$, of length one, 
by rotating a distance $v$, and  $\Nu\Kappa (-I)$ acts on $V/\Nu$ as a reflection. 
Let $c, d \in \integers$.  Then 
$t_1^ct_2^d = (c+dv)e_1+de_2 + I,$
and $t_1^ct_2^d \in \Kappa$ if and only if $c+dv = 0$.  
Thus if $v$ is irrational, then $\Kappa = \{I\}$, and $\Gamma_v/\Nu\Kappa$ is infinite.  
If $v = a/b$ with $a, b\in\integers$, $b> 0$, and $a, b$ coprime, 
then $\Nu\Kappa t_2$ has order $b$  in $\Gamma_v/\Nu\Kappa$, 
since $\Gamma_v/\Nu\Kappa$ acts effectively on $V/\Nu$ by Theorem 3(2) of \cite{R-T-C}. 
Thus $\Nu$ has an orthogonal dual in $\Gamma_v$ if and only if $v$ is rational. 
This example shows that  the order of the structure group $\Gamma_v/\Nu\Kappa$ 
is not necessarily an invariant of the affine equivalence class of $(\Gamma_v,\Nu)$. 
See also Example 3 of \cite{R-T-C}. 

\section{Classification from the Action of the Structure Group}\label{S:9} 

Let $\Nu$ be a normal subgroup of an $n$-space group $\Gamma$ 
such that $\Gamma/\Nu$ is either infinite cyclic or infinite dihedral. 
We say that a pair of generators $\{\Nu\gamma_1,\Nu\gamma_2\}$ of $\Gamma/\Nu$ 
is {\it canonical} when $\gamma_2=\gamma_1^{-1}$ if $\Gamma/\Nu$ is cyclic or 
$\Nu\gamma_1,\Nu\gamma_2$ are Coxeter generators if $\Gamma/\Nu$ is dihedral. 
The isomorphism class of $(\Gamma,\Nu)$ is determined by the action of 
a canonical pair of generators of $\Gamma/\Nu$ on $V/\Nu$. 
Let $\Kappa$ be the kernel of the action of $\Gamma$ on $V = \mathrm{Span}(\Nu)$. 
The action of $\Gamma/\Nu$ on $V/\Nu$ factors through the action of 
the structure group $\Gamma/\Nu\Kappa$ on $V/\Nu$. 
The question then arises as to when a pair of generators of $\Gamma/\Nu\Kappa$ 
is the image of a canonical pair of generators of $\Gamma/\Nu$ 
under the natural projection from $\Gamma/\Nu$ to $\Gamma/\Nu\Kappa$. 
In this section, we answer this question.  
This will enable us to determine the action of $\Gamma/\Nu$ on $V/\Nu$
from the diagonal action of the structure group $\Gamma/\Nu\Kappa$ on $V/\Nu \times V^\perp/\Kappa$. 

We have a short exact sequence
$$1 \to \Nu\Kappa/\Nu \hookrightarrow \Gamma/\Nu \to \Gamma/\Nu\Kappa \to 1.$$
By Lemma \ref{L:9} below for the dihedral case, 
every normal subgroup of $\Gamma/\Nu$ of infinite index is trivial. 
Hence, if the structure group $\Gamma/\Nu\Kappa$ is infinite, 
then $\Kappa \cong \Nu\Kappa/\Nu$ is trivial, 
and so $\Gamma/\Nu$ is the structure group. 

Suppose that $\Gamma/\Nu$ is cyclic and the structure group $\Gamma/\Nu\Kappa$ is finite. 
Then $\Gamma/\Nu\Kappa$ is finite cyclic of order $m$ for some positive integer $m$, 
since $\Gamma/\Nu\Kappa$ is a quotient of $\Gamma/\Nu$. 
Hence, the number of generators of $\Gamma/\Nu\Kappa$ is equal to the Euler phi function of $m$,  
and so $\Gamma/\Nu\Kappa$ may have more than two generators, 
and a generator of  $\Gamma/\Nu\Kappa$ may not lift to a generator of $\Gamma/\Nu$. 
The next theorem gives a necessary and sufficient condition for a generator of $\Gamma/\Nu\Kappa$ 
to lift to a generator of $\Gamma/\Nu$ with respect to the quotient map 
from $\Gamma/\Nu$ to $\Gamma/\Nu\Kappa$. 

\begin{theorem}\label{T:11} 
Let $\Nu$ be a normal subgroup of an $n$-space group $\Gamma$ 
such that $\Gamma/\Nu$ is infinite cyclic, 
and let $\Kappa$ be the kernel of the action of $\Gamma$ on $V = \mathrm{Span}(\Nu)$, 
and suppose that the structure group $\Gamma/\Nu\Kappa$ is finite of order $m$. 
Let $\gamma$ be an element of $\Gamma$ such that $\gamma\Nu\Kappa$ generates $\Gamma/\Nu\Kappa$.  
Then there exists an element $\delta$ of $\Gamma$ 
such that $\gamma\Nu\Kappa = \delta\Nu\Kappa$ and $\delta\Nu$ generates $\Gamma/\Nu$ 
if and only if $\gamma\Nu\Kappa$ acts on the circle $V^\perp/\Kappa$ 
by a rotation of $\pm2\pi/m$. 
\end{theorem}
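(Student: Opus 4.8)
The plan is to push the whole statement onto an explicit coset computation inside $\Gamma/\Nu\cong\integers$, after first pinning down the geometry of $V^\perp$. Since $\Gamma/\Nu$ acts effectively on $V^\perp$ as a space group (Lemma \ref{L:1} together with the discussion in Section \ref{S:3}) and $\Gamma/\Nu$ is infinite cyclic, $V^\perp$ must be one-dimensional, and an infinite cyclic space group of isometries of $E^1$ is generated by a translation. So I would fix an identification $V^\perp\cong\realnos$ and an element $t\in\Gamma$ with $t\Nu$ a generator of $\Gamma/\Nu$ such that, under the isomorphism $\Rho:\Gamma/\Nu\to\Gamma'$, the element $t'$ is the translation $x\mapsto x+c$ with $c\neq 0$; then all of $\Gamma'$ acts on $V^\perp$ by translations. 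The subgroup $\Nu\Kappa/\Nu$ has index $m$ in $\Gamma/\Nu$, hence is generated by $t^m\Nu$; and since $\Nu\cap\Kappa=\{I\}$ and every $\kappa\in\Kappa$ fixes $V$ pointwise (so $\kappa'=\kappa|_{V^\perp}$), the group $\Kappa$ acts on $V^\perp\cong\realnos$ as the infinite cyclic group of translations generated by $x\mapsto x+mc$. Thus $V^\perp/\Kappa$ is a circle of circumference $m|c|$ on which the structure group $\Gamma/\Nu\Kappa$ acts by the residual action, automatically by rotations since $\Gamma/\Nu$ acts by translations.

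Next I would write $\gamma\Nu=t^a\Nu$ for the unique integer $a$ determined by $\gamma$ (this makes sense because $t\Nu$ generates $\Gamma/\Nu$), noting that the hypothesis ``$\gamma\Nu\Kappa$ generates $\Gamma/\Nu\Kappa$'' is exactly $\gcd(a,m)=1$. By the first paragraph, $\gamma'=(t')^a$ is the translation $x\mapsto x+ac$ of $V^\perp$, so $\gamma\Nu\Kappa$ acts on the circle $V^\perp/\Kappa$ of circumference $m|c|$ as the rotation through arc length congruent to $ac$ modulo $mc$, that is, as the rotation of angle $2\pi a/m$ with respect to the natural normalization (the sign depending only on the chosen orientation of the circle). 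Consequently, $\gamma\Nu\Kappa$ acts on $V^\perp/\Kappa$ by a rotation of $\pm 2\pi/m$ if and only if $a\equiv\pm 1\pmod m$.

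Finally I would treat the lifting condition directly. An element $\delta\in\Gamma$ satisfies $\delta\Nu\Kappa=\gamma\Nu\Kappa$ precisely when $\delta\Nu$ lies in the coset $t^a\Nu\cdot(\Nu\Kappa/\Nu)$, i.e.\ $\delta\Nu=t^{a+km}\Nu$ for some $k\in\integers$; and $\delta\Nu$ generates $\Gamma/\Nu\cong\integers$ precisely when $a+km=\pm 1$. Hence such a $\delta$ exists if and only if $a\equiv\pm 1\pmod m$ (taking $\delta=t^{a+km}$ with $a+km=\pm 1$ when it does). Comparing with the previous paragraph, both the ``acts by a rotation of $\pm 2\pi/m$'' condition and the ``lifts to a generator of $\Gamma/\Nu$'' condition are equivalent to $a\equiv\pm 1\pmod m$, so they are equivalent to each other, which is the assertion of the theorem.

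I expect no deep obstacle here; the two delicate points to handle carefully are (i) justifying that $\Gamma/\Nu$ infinite cyclic forces $\dim V^\perp=1$ with the action by translations, and (ii) fixing the normalization so that ``a rotation of $\pm 2\pi/m$'' of $V^\perp/\Kappa$ corresponds \emph{exactly} to $a\equiv\pm 1\pmod m$, the two signs matching the two orientations of the circle. The rest is routine bookkeeping with $\Rho$ and the passage $\gamma\mapsto\gamma'$, which I anticipate being the fiddliest part of the write-up.
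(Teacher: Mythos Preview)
Your proposal is correct and follows essentially the same approach as the paper: identify $\Gamma/\Nu$ with $\integers$ acting by translations on $V^\perp$, recognize $\Nu\Kappa/\Nu$ as the index-$m$ subgroup so that $V^\perp/\Kappa$ is a circle on which $\Gamma/\Nu\Kappa$ acts by rotations, and then match the rotation-by-$\pm2\pi/m$ condition to the lifting condition. The only stylistic difference is that the paper's converse is terser---having shown a generator $\delta\Nu$ gives rotation by $\pm2\pi/m$, it invokes effectiveness of the $\Gamma/\Nu\Kappa$-action on $V^\perp/\Kappa$ (Theorem 3(2) of \cite{R-T-C}) to conclude $\gamma\Nu\Kappa=\delta^{\pm1}\Nu\Kappa$ directly---whereas you carry out the equivalent coset computation $a\equiv\pm1\pmod m$ explicitly.
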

\begin{proof}
Suppose $\delta\Nu$ is a generator of $\Gamma/\Nu$. 
Since $V^\perp/(\Gamma/\Nu)$ is a circle, $\delta$ acts as a translation $d+I$ on $V^\perp$ with $d\neq 0$. 
As $\Nu\Kappa/\Nu$ is a subgroup of $\Gamma/\Nu$ of index $m$ and $\Kappa \cong \Nu\Kappa/\Nu$ is infinite cyclic,  
the group $\Kappa$ has a generator that acts as a translation $md+I$ on $V^\perp$. 
Therefore $\delta\Nu\Kappa$ acts by a rotation of $\pm2\pi/m$ on the circle $V^\perp/\Kappa$. 

Suppose $\gamma$ is an element of $\Gamma$ such that  
$\gamma\Nu\Kappa$ acts by a rotation of $\pm2\pi/m$ on the circle $V^\perp/\Kappa$. 
Then $\gamma\Nu\Kappa = \delta^{\pm 1}\Nu\Kappa$, since 
the group $\Gamma/\Nu\Kappa$ acts effectively on $V^\perp/\Kappa$ 
by Theorem 3(2) of \cite{R-T-C}.
\end{proof} 

\begin{lemma}\label{L:9}  
Let $\Delta$ be an infinite dihedral group with Coxeter generators $\alpha, \beta$. 
Then the proper normal subgroups of $\Delta$ are the infinite dihedral groups $\langle \alpha, \beta\alpha\beta\rangle$ 
and $\langle \beta,\alpha\beta\alpha\rangle$ of index 2, and the infinite cyclic group $\langle (\alpha\beta)^m\rangle$ 
of index $2m$ for each positive integer $m$. 
\end{lemma}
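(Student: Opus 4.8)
The plan is to work with the standard presentation $\Delta=\langle\alpha,\beta\mid\alpha^2=\beta^2=1\rangle$ and the semidirect product decomposition it induces. First I would set $t=\alpha\beta$, so that $T=\langle t\rangle$ is an infinite cyclic normal subgroup with $[\Delta:T]=2$, every element of $\Delta$ is uniquely $t^k$ or $t^k\alpha$ for some $k\in\integers$, and the defining relations reduce to $\alpha t\alpha^{-1}=t^{-1}$, with the two consequences I will use repeatedly: $t^j(t^k\alpha)t^{-j}=t^{k+2j}\alpha$ and $(t^k\alpha)(t^\ell\alpha)=t^{k-\ell}$; in particular every $t^k\alpha$ is an involution. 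I will also record the identities $\beta=t^{-1}\alpha$, $\beta\alpha\beta=\alpha t^2$, and $\alpha\beta\alpha=t\alpha$, all immediate from $\alpha t\alpha^{-1}=t^{-1}$.

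Now let $N$ be a proper normal subgroup of $\Delta$; I will show that, apart from the trivial subgroup, $N$ is one of the groups in the list. Since $T$ is infinite cyclic, $N\cap T=\langle t^m\rangle$ for a unique $m\geq 0$. \emph{Case 1: $N\subseteq T$.} Then $N=\langle t^m\rangle$, and assuming $N\neq\{1\}$ we have $m\geq 1$; this group is normal because $\alpha t^m\alpha^{-1}=t^{-m}\in N$, is infinite cyclic, has index $[\Delta:T]\,[T:\langle t^m\rangle]=2m$, and equals $\langle(\alpha\beta)^m\rangle$ (the subcase $m=1$ being $T$ itself). \emph{Case 2: $N\not\subseteq T$.} Choose a reflection $t^k\alpha\in N$. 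Normality gives $t^{k+2j}\alpha\in N$ for all $j\in\integers$, and the product $(t^k\alpha)(t^{k+2j}\alpha)=t^{-2j}$ lies in $N$; hence $\langle t^2\rangle\subseteq N$, so $N\cap T$ is $T$ or $\langle t^2\rangle$. If $N\cap T=T$, then $N\supseteq T$ and $N$ contains a reflection, forcing $N=\Delta$, contrary to properness; so $N\cap T=\langle t^2\rangle$, and since $N$ contains a reflection we have $NT=\Delta$ and $[\Delta:N]=[T:N\cap T]=2$. Using $N\cap T=\langle t^2\rangle$ once more, $N$ contains exactly the reflections $t^\ell\alpha$ with $\ell\equiv k\pmod 2$, so $N$ is determined by the parity of $k$: if $k$ is even then $N=\langle t^2,\alpha\rangle$, and if $k$ is odd then $N$ contains $\beta=t^{-1}\alpha$, so $N=\langle t^2,\beta\rangle$.

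To identify these with the subgroups named in the lemma I would invoke the recorded identities: $\langle t^2,\alpha\rangle=\langle\alpha,\alpha t^2\rangle=\langle\alpha,\beta\alpha\beta\rangle$, and since $(t^{-1}\alpha)(t\alpha)=t^{-2}$, also $\langle t^2,\beta\rangle=\langle\beta,\alpha\beta\alpha\rangle$; each is generated by two involutions whose product $t^{\pm 2}$ has infinite order, hence is infinite dihedral of index $2$. For the converse, one checks that every group in the list is a proper normal subgroup of the stated index: subgroups of index $2$ are automatically normal, and the normality, index, and isomorphism type of $\langle t^m\rangle$ were already verified in Case 1. I do not anticipate a genuine obstacle here, as this is essentially a finite bookkeeping argument; the step needing the most care is Case 2 — extracting $\langle t^2\rangle\subseteq N$ from a single reflection, ruling out $N\cap T=T$, and then using the parity of the exponent to see that there are exactly two possibilities, which must then be matched with the two symmetric generating sets $\langle\alpha,\beta\alpha\beta\rangle$ and $\langle\beta,\alpha\beta\alpha\rangle$ of the statement.
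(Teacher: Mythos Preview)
Your proof is correct. The case split (whether $N$ lies in the translation subgroup $T=\langle\alpha\beta\rangle$ or contains a reflection) matches the paper's, but the execution is different: the paper realizes $\Delta$ as a discrete isometry group of $E^1$ and observes geometrically that every reflection is conjugate to exactly one of $\alpha$ or $\beta$, so a normal subgroup containing a reflection contains the normal closure $\langle\langle\alpha\rangle\rangle=\langle\alpha,\beta\alpha\beta\rangle$ or $\langle\langle\beta\rangle\rangle=\langle\beta,\alpha\beta\alpha\rangle$, each already of index $2$. You instead work purely algebraically from the presentation, extracting $\langle t^2\rangle\subseteq N$ by conjugating a single reflection and then using a parity argument on the exponent to pin down the two possibilities. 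The geometric version is shorter because the two conjugacy classes of reflections are visually obvious; your version is self-contained and avoids invoking the $E^1$ model, at the cost of a few explicit identities. Both arguments are standard and equally acceptable here.
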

\begin{proof}
We may assume that $\Delta$ is a discrete group of isometries of $E^1$. 
Then every element of $\Delta$ is either a translation or a reflection. 
Let $\Nu$ be a proper normal subgroup of $\Delta$. 
Suppose $\Nu$ contains a reflection.  Then $\Nu$ contains either $\alpha$ or $\beta$, 
since every reflection in $\Delta$ is conjugate in $\Delta$ to either $\alpha$ or $\beta$. 
Let $\langle\langle \alpha\rangle\rangle$ be the normal closure of $\langle \alpha\rangle$ in $\Delta$. 
Then $\langle\langle \alpha\rangle\rangle$ is the infinite dihedral group $ \langle \alpha, \beta\alpha\beta\rangle$. 
Let  $\langle\langle\beta\rangle\rangle$ be the normal closure of $\langle \beta\rangle$ in $\Delta$. 
Then $\langle\langle\beta\rangle\rangle$ is the infinite dihedral group $ \langle \beta, \alpha\beta\alpha\rangle$. 
Now $\Nu$ contains either $\langle\langle \alpha\rangle\rangle$ or $\langle\langle\beta\rangle\rangle$. 
As both $\langle\langle \alpha\rangle\rangle$ and $\langle\langle\beta\rangle\rangle$
have index 2 in $\Delta$, we have that $\Nu$ is either  $\langle\langle \alpha\rangle\rangle$ or $\langle\langle\beta\rangle\rangle$. 

Now suppose $\Nu$ does not contain a reflection.  
Then $\Nu$ is a subgroup of the group $\langle \alpha\beta\rangle$ of translations of $\Delta$.  
 Hence $\Nu = \langle (\alpha\beta)^m\rangle$ for some positive integer $m$. 
 Moreover each $m$ is possible, since $\langle \alpha\beta\rangle$ is a characteristic subgroup of $\Delta$. 
 \end{proof}

Now assume that $\Gamma/\Nu$ is dihedral and $\Gamma/\Nu\Kappa$ is finite. 
Then $\Gamma/\Nu\Kappa$ is trivial or a finite dihedral group of order $2m$ for some positive integer $m$ 
by Lemma \ref{L:9}, since $\Gamma/\Nu\Kappa$ is a quotient of $\Gamma/\Nu$. 
The next theorem gives necessary and sufficient conditions for a pair of generators of $\Gamma/\Nu\Kappa$ 
to lift to a pair of Coxeter generators of $\Gamma/\Nu$ with respect to the the quotient map 
from $\Gamma/\Nu$ to $\Gamma/\Nu\Kappa$.

 \begin{theorem}\label{T:12} 
Let $\Nu$ be a normal subgroup of an $n$-space group $\Gamma$ 
such that $\Gamma/\Nu$ is infinite dihedral, 
and let $\Kappa$ be the kernel of the action of $\Gamma$ on $V = \mathrm{Span}(\Nu)$, 
and suppose that the structure group $\Gamma/\Nu\Kappa$ is finite. 
Let $\gamma_1$ and $\gamma_1$ be elements of $\Gamma$ such that $\{\gamma_1\Nu\Kappa,\gamma_2\Nu\Kappa\}$ 
generates $\Gamma/\Nu\Kappa$. 
Then there exists elements $\delta_1$ and $\delta_2$ of $\Gamma$ 
such that $\{\gamma_1\Nu\Kappa, \gamma_2\Nu\Kappa\} = \{\delta_1\Nu\Kappa, \delta_2\Nu\Kappa\}$,  
and $\{\delta_1\Nu,\delta_2\Nu\}$ is a set of Coxeter generators of $\Gamma/\Nu$  
if and only if either
\begin{enumerate}
\item The order of $\Gamma/\Nu\Kappa$ is 1, or
\item The order of $\Gamma/\Nu\Kappa$ is 2, the group $\Kappa$ is infinite dihedral,  
and one of $\gamma_1\Nu\Kappa$ or $\gamma_2\Nu\Kappa$ is the identity element of $\Gamma/\Nu\Kappa$,  
and the other acts as the reflection of the closed interval $V^\perp/\Kappa$, or 
\item The order of $\Gamma/\Nu\Kappa$ is $2m$ for some positive integer $m$, 
the group $\Kappa$ is infinite cyclic, and both $\gamma_1\Nu\Kappa$ and $\gamma_2\Nu\Kappa$ 
act as reflections of  the circle $V^\perp/\Kappa$, and $\gamma_1\gamma_2\Nu\Kappa$ acts on $V^\perp/\Kappa$ 
as a rotation of $\pm2\pi/m$. 
\end{enumerate}
\end{theorem}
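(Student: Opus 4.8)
The plan is to work with a concrete model. Since $\Gamma/\Nu$ acts on $V^\perp$ as a space group of isometries and an infinite dihedral space group is one-dimensional (its translation subgroup is infinite cyclic, hence of full rank only in dimension $1$), we may identify $V^\perp$ with $E^1$ and $\Gamma/\Nu$ with a standard infinite dihedral group $D$ of isometries of $E^1$, acting faithfully since $\Nu$ is the kernel of the action of $\Gamma$ on $V^\perp$. Fix Coxeter generators $a,b$ of $D$. The elementary facts I would record first are: the reflections of $D$ are exactly the elements $r_j=(ab)^j a$ for $j\in\integers$; the translation subgroup of $D$ is $\langle ab\rangle$; $r_jr_k=(ab)^{j-k}$; and consequently $\{r_j,r_k\}$ is a set of Coxeter generators of $D$ if and only if $|j-k|=1$. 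Also, $\gamma\Nu\Kappa$ acts on $V^\perp/\Kappa$ as a reflection precisely when $\gamma\Nu$ is a reflection of $E^1$, that is, $\gamma\Nu=r_j$ for some $j$, never when $\gamma\Nu$ is a translation.

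Next I would pin down $\Nu\Kappa/\Nu$, which is a normal subgroup of $D$ isomorphic to $\Kappa$ because $\Nu\cap\Kappa=\{I\}$. Finiteness of the structure group forces $\Nu\Kappa/\Nu$ to have finite index, hence to be nontrivial; and $D$ has trivial center, so has no normal subgroup of order $2$. By Lemma \ref{L:9}, $\Nu\Kappa/\Nu$ is therefore either all of $D$, or one of the two index-$2$ infinite dihedral subgroups $\langle a,bab\rangle$, $\langle b,aba\rangle$, or the index-$2m$ infinite cyclic subgroup $\langle(ab)^m\rangle$ for some positive integer $m$. Correspondingly $\Gamma/\Nu\Kappa$ is trivial with $\Kappa$ infinite dihedral, of order $2$ with $\Kappa$ infinite dihedral, or dihedral of order $2m$ with $\Kappa$ infinite cyclic; in each case I would describe $V^\perp/\Kappa$ (a closed interval when $\Kappa$ is infinite dihedral, a circle when $\Kappa$ is infinite cyclic) together with the induced effective action of $\Gamma/\Nu\Kappa$ on it, using $V^\perp/(\Gamma/\Nu)=(V^\perp/\Kappa)/(\Gamma/\Nu\Kappa)$.

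With this in place, the proof reduces to asking in each case which generating pairs $\{\gamma_1\Nu\Kappa,\gamma_2\Nu\Kappa\}$ of $\Gamma/\Nu\Kappa$ are of the form $\{r_{j_1}\Kappa,r_{j_2}\Kappa\}$ with $|j_1-j_2|=1$. When $\Gamma/\Nu\Kappa$ is trivial the condition on the pair is vacuous and any Coxeter generators of $\Gamma/\Nu$ serve as $\{\delta_1\Nu,\delta_2\Nu\}$, giving (1). When $\Kappa$ is infinite cyclic of index $2m$, every $r_j\Kappa$ is a reflection of the circle $V^\perp/\Kappa$ and $r_{j_1}\Kappa\cdot r_{j_2}\Kappa=(ab)^{j_1-j_2}\Kappa$, so a lift with $|j_1-j_2|=1$ exists if and only if both $\gamma_i\Nu\Kappa$ are reflections of the circle and $\gamma_1\gamma_2\Nu\Kappa$ is a rotation by $\pm2\pi/m$, which is (3), the case $m=1$ included. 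When $\Kappa$ is index-$2$ infinite dihedral, $r_j\Kappa$ is the identity for $j$ in one residue class modulo $2$ and equals the unique, midpoint reflection of the interval $V^\perp/\Kappa$ for the other; since $|j_1-j_2|=1$ forces opposite parities, a lift exists if and only if one $\gamma_i\Nu\Kappa$ is the identity and the other is the interval reflection, which is (2). Assembling the three cases proves both directions.

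The main obstacle is bookkeeping rather than a new idea: one must carefully translate the group-theoretic lifting criterion (that $|j_1-j_2|=1$ with the prescribed residues modulo the index) into the geometric conditions on the $\Gamma/\Nu\Kappa$-action on $V^\perp/\Kappa$, and, crucially, verify that those conditions are necessary as well as sufficient — for instance that in the dihedral-order-$2m$ case a generating pair whose product is a rotation by $2\pi k/m$ with $k\not\equiv\pm1\pmod m$ really cannot be lifted to Coxeter generators, and that in the order-$2$ case a pair with both cosets nontrivial cannot be lifted. The identification of the abstract normal subgroup $\Nu\Kappa/\Nu$ with its concrete realization inside $D$, and hence of $\Kappa$ with its action on $V^\perp$, also needs care.
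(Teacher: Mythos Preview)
Your proposal is correct and follows essentially the same route as the paper: classify $\Nu\Kappa/\Nu$ via Lemma~\ref{L:9}, then in each case translate the lifting criterion into the geometry of the $\Gamma/\Nu\Kappa$-action on $V^\perp/\Kappa$. The only tactical difference is that for case (3) you lift directly by shifting the reflection index modulo $m$, whereas the paper fixes Coxeter generators $\delta_1,\delta_2$ at the outset and argues via conjugacy in the finite dihedral quotient; one small caveat is that your preliminary claim that a translation $\gamma\Nu$ never acts as a reflection on $V^\perp/\Kappa$ fails when $\Kappa$ is infinite dihedral (the translation $ab$ descends to the reflection of the interval), but you do not invoke this in your treatment of case (2), so the argument is unaffected.
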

\begin{proof}
Suppose that $\delta_1$ and $\delta_2$ are elements of $\Gamma$ 
such that $\{\delta_1\Nu,\delta_2\Nu\}$ is a set of Coxeter generators of $\Gamma/\Nu$ 
and $\{\gamma_1\Nu\Kappa, \gamma_2\Nu\Kappa\} = \{\delta_1\Nu\Kappa, \delta_2\Nu\Kappa\}$. 
The group $\Nu\Kappa/\Nu$ is a normal subgroup of $\Gamma/\Nu$ of finite index, 
since $(\Gamma/\Nu)/(\Nu\Kappa/\Nu) = \Gamma/\Nu\Kappa$. 
We have that $\Nu\Kappa/\Nu \cong \Kappa$, since $\Nu\cap\Kappa =\{I\}$.   
Suppose that $\Kappa$ is infinite dihedral. 
Then the order of $\Gamma/\Nu\Kappa$ is 1 or 2  by Lemma \ref{L:9}. 
If the order of $\Gamma/\Nu\Kappa$ is 2, 
then $\Nu\Kappa/\Nu$ is either $\langle \delta_1\Nu, \delta_2\delta_1\delta_2\Nu\rangle$ 
or $\langle \delta_2\Nu, \delta_1\delta_2\delta_1\Nu\rangle$ by Lemma \ref{L:9}, and so 
either  $\delta_1\Nu\Kappa = \Nu\Kappa$ or  $\delta_2\Nu\Kappa = \Nu\Kappa$, and 
hence one of  $\gamma_1\Nu\Kappa$  or $\gamma_2\Nu\Kappa$ is  $\Nu\Kappa$ 
and the other acts as the reflection of the closed interval $V^\perp/\Kappa$, 
since $\Gamma/\Nu\Kappa$ acts effectively on $V^\perp/\Kappa$ by Theorem 3(2) of \cite{R-T-C}.

Suppose that $\Kappa$ is infinite cyclic. 
Then the order of $\Gamma/\Nu\Kappa$ is $2m$ for some positive integer $m$ 
and $\Nu\Kappa/\Nu = \langle (\delta_1\delta_2)^m\Nu\rangle$ by Lemma \ref{L:9}. 
Hence both $\delta_1\Nu\Kappa$ and $\delta_2\Nu\Kappa$ 
act as reflections of  the circle $V^\perp/(\Nu\Kappa/\Nu) = V^\perp/\Kappa$,  
and $\delta_1\delta_2\Nu\Kappa$ acts on $V^\perp/\Kappa$ 
as a rotation of $\pm2\pi/m$. 

Conversely, suppose $\delta_1,\delta_2$ are elements of $\Gamma$ such that 
$\{\delta_1\Nu,\delta_2\Nu\}$ is a set of Coxeter generators of $\Gamma/\Nu$. 
If $\Gamma/\Nu\Kappa$ is trivial, then obviously 
$\{\gamma_1\Nu\Kappa, \gamma_2\Nu\Kappa\} = \{\delta_1\Nu\Kappa, \delta_2\Nu\Kappa\}$. 
Suppose next that statement (2) holds.  
Then $\Nu\Kappa/\Nu$ is an infinite dihedral group, 
and either $\delta_1\Nu\Kappa$ or $\delta_2\Nu\Kappa$ is trivial in 
$(\Gamma/\Nu)/(\Nu\Kappa/\Nu) = \Gamma/\Nu\Kappa$ by Lemma \ref{L:9}. 
Hence $\{\gamma_1\Nu\Kappa,\gamma_2\Nu\Kappa\} = \{\delta_1\Nu\Kappa,\delta_2\Nu\Kappa\}$, 
since $\Gamma/\Nu\Kappa$ has order  2. 

Now suppose statement (3) holds. 
Then $\Nu\Kappa/\Nu = \langle (\delta_1\delta_2)^m\Nu\rangle$ by Lemma \ref{L:9}. 
Hence, a generator of the infinite cyclic group $\Kappa$ acts on the line $V^\perp$ 
in the same way that $(\delta_1\delta_2)^m\Nu$ acts on $V^\perp$ as a translation. 
Therefore $\delta_1\delta_2\Nu\Kappa$ acts as a rotation of $\pm2\pi/m$ on the circle $V^\perp/\Kappa$.  
Hence $\gamma_1\gamma_2\Nu\Kappa = (\delta_1\delta_2)^{\pm 1}\Nu\Kappa$, 
since $\Gamma/\Nu\Kappa$ acts effectively on $V^\perp/\Kappa$. 
Therefore, there exists $\gamma\in\Gamma$ such that 
$\{\gamma_1\Nu\Kappa,\gamma_2\Nu\Kappa\} = 
\{\gamma\delta_1\gamma^{-1}\Nu\Kappa,\gamma\delta_2\gamma^{-1}\Nu\Kappa\}$.  
Moreover $\{\gamma\delta_1\gamma^{-1}\Nu, \gamma\delta_2\gamma^{-1}\Nu\}$ 
is a set of Coxeter generators of $\Gamma/\Nu$.
\end{proof}



\section{The Classification of  Geometric Fibrations of Flat 2-Orbifolds}\label{S:10}  

For 2-dimensional orbifolds, Seifert fibrations and co-Seifert fibrations are the same. 
In this section, we describe the classification of all the Seifert geometric fibrations of compact, connected, flat 2-orbifolds up to affine equivalence. 
We denote a circle by $\mathrm{O}$ and a closed interval by $\mathrm{I}$.

Table 1 describes, via the generalized Calabi construction,  
all the Seifert and dual Seifert fibrations of a compact, connected, flat 2-orbifold up to affine equivalence.  

(1) The first column lists the IT number of the corresponding 2-space group $\Gamma$ 
given in Table 1A of \cite{B-Z}. Only 2-space groups with 
IT numbers 1 - 9 appear in Table 1 by Theorem 11 of \cite{R-T}. 

(2) The second column lists the Conway name \cite{Conway} of the corresponding flat orbifold $E^2/\Gamma$. 

(3)  The third column lists the fiber $V/\Nu$ and base $V^\perp/(\Gamma/\Nu)$ 
of the Seifert fibration corresponding to a 1-dimensional, complete, normal subgroup $\Nu$ of $\Gamma$ 
with $V = \mathrm{Span}(\Nu)$. 
Parentheses indicates that the fiber is $\mathrm{O}$, 
and closed brackets indicates that the fiber is $\mathrm{I}$. 
A dot indicates that the base is $\mathrm{O}$ and a dash indicates 
that the base is $\mathrm{I}$. 
For example, $(\,\cdot\,)$ indicates a fibration with fiber and base $\mathrm{O}$. 
The group $\Nu$ is described in \S 10 of \cite{R-T}. 
For the first two rows of Table 1, the group $\Nu$ corresponds to the parameters $(a,b) = (1,0)$ 
in cases (1) and (2) of \S 10 of \cite{R-T}. 

(4) The fourth column indicates whether or not the corresponding 
space group extension $1\to \Nu\to \Gamma\to \Gamma/\Nu\to 1$ splits 
as implied by Theorem 14 of \cite{R-T-C}. 

(5) The fifth column lists the fiber $V^\perp/\Kappa$ and base $V/(\Gamma/\Kappa)$, 
with $\Kappa = \Nu^\perp$,  of the dual Seifert fibration. 

(6) The sixth column indicates whether or not the corresponding 
space group extension $1\to \Kappa\to \Gamma\to \Gamma/\Kappa\to 1$ splits. 

(7) The seventh column lists the isomorphism type of the structure group $\Gamma/\Nu\Kappa$ with $C_n$ indicating a cyclic group of order $n$,  
and $D_2$ indicating a dihedral group of order 4. 
The order of the structure groups for the 2-space groups with IT numbers 1 and 2 in Table 1 
were chosen to be as small as possible. 
See Example 3 in \cite{R-T-C} and Example 1 for the full range of structure groups for  2-space groups with IT numbers 1 and 2. 
The isomorphism types of the structure groups for the remaining 2-space groups are unique, 
since $\Nu$ and $\Kappa = \Nu^\perp$ are the only proper complete normal subgroups of $\Gamma$ in Rows 3 - 9. 

(8) The last column indicates how the structure group 
$\Gamma/\Nu\Kappa$ acts diagonally on the Cartesian product of the fibers $V/\Nu \times V^\perp/\Kappa$. 
We denote the identity map by idt., a halfturn by $2$-rot., and a reflection by ref. 
We denote the reflection of $\mathrm{O}$ orthogonal to ref.\ by ref.$'$. 

\begin{table}  
\begin{tabular}{llllllll}
no.  & CN & fibr. & split & dual & split & grp. & structure group action \\
\hline 
1 & $\circ$   & $(\,\cdot\,)$ & Yes & $(\,\cdot\, )$ & Yes & $C_1$ & (idt., idt.)  \\
2 & $2222$ & $(-)$            & Yes & $(-)$          & Yes & $C_2$ & (ref., ref.)\\
3 & $**$       & $(-)$            & Yes & $[\,\cdot\,]$& Yes & $C_1$ & (idt., idt.)\\
4 & $\times\times$ & $(-)$ & No & $(\,\cdot\, )$ & Yes & $C_2$ & (2-rot., ref.)\\
5 & $*\times$  & $(-)$        & No   & $[\,\cdot\,]$ & Yes & $C_2$ & (2-rot, ref.)\\
6 & $*2222$ & $[-]$            & Yes & $[-]$          & Yes & $C_1$ & (idt., idt.)\\
7 & $22*$   & $(-)$             & Yes  & $[-]$         & Yes   &  $C_2$ & (ref., ref.) \\
8 & $22\times$ & $(-)$      & No   & $(-)$         & No    &  $D_2$ &(ref., ref.), (2-rot., ref.$'$) \\
9 & $2\hbox{$*$}22$ &  $[-]$  & Yes & $[-]$       & Yes  & $C_2$ & (ref., ref.)
\end{tabular}

\vspace{.15in}
\caption{The Seifert and dual Seifert fibrations of the 2-space groups.}
\end{table}

The actions of the structure group on the fibers of the Seifert fibrations  
are described by Theorem 7 of \cite{R-T-C} and Example 6 in \cite{R-T-C}, 
except for the case when the structure group is a dihedral group of order 4. 
The problem with dihedral groups of order 4 is that it is not clear a priori 
which of the three nonidentity elements acts as a halfturn on a circle factor of $V/\Nu \times V^\perp/\Kappa$. 
See Example 3 for the description of the action in the case of Row 8 of Table 1.

\medskip
\noindent{\bf Example 2.} 
Let $\Gamma$ be the group in Table 1 with IT number 4. 
Then $E^2/\Gamma$ is a Klein bottle.  The structure group $G = \Gamma/\Nu\Kappa$ 
has order 2. The fibers $V/\Nu$ and $V^\perp/\Kappa$ are both circles. 
The generator of the group $G$ acts by a halfturn on $V/\Nu$, since $(V/\Nu)/G =V/(\Gamma/\Kappa)$ is a circle. 
The generator of the group $G$ acts by a reflection on $V^\perp/\Kappa$, 
since $(V^\perp/\Kappa)/G=V^\perp/(\Gamma/\Nu)$ is a closed interval. 
The action of the generator of $G$ on $V/\Nu\times V^\perp/\Kappa$ 
is specified by the entry (2-rot., ref.) in Row 4 of Table 1.

\medskip
\noindent{\bf Example 3.} 
Let $\Gamma$ be the group with IT number 8 in Table 1A of \cite{B-Z}. 
Then $\Gamma = \langle t_1, t_2, A, \beta\rangle$ 
where $t_i = e_i+I$ for $i=1,2$ are the standard translations, 
and $\beta = \frac{1}{2}e_1+\frac{1}{2}e_2 + B$, and 
$$A = \left(\begin{array}{rr} -1 & 0 \\ 0 & -1 \end{array}\right), \  \hbox{and}\ \ 
B = \left(\begin{array}{rr} 1 & 0 \\ 0 & -1 \end{array}\right).$$
The isomorphism type of $\Gamma$ is $22\times$ in Conway's notation or $pgg$ in IT notation.  
The orbifold $E^2/\Gamma$ is a projective pillow. 
The group $\Nu = \langle t_1\rangle$ is a complete normal subgroup of $\Gamma$, 
with $V= \mathrm{Span}(\Nu) = \mathrm{Span}\{e_1\}$. 
The flat orbifold $V/\Nu$ is a circle. 
Let $\Kappa = \Nu^\perp = \langle t_2\rangle$. 
Then $V^\perp/\Kappa$ is also a circle. 
The structure group $\Gamma/\Nu\Kappa$ is a dihedral group of order 4 
generated by $\Nu\Kappa A$ and $\Nu\Kappa \beta$. 
The element $\Nu\Kappa A$ acts as a reflection on $V/\Nu$ and on $V^\perp/\Kappa$.  
The action of $\Nu\Kappa A$ on $V/\Nu\times V^\perp/\Kappa$ is specified by the entry (ref., ref.) in Row 8 of Table 1. 
The element $\Nu\Kappa \beta$ acts on $V/\Nu$ as a halfturn and on $V^\perp/\Kappa$ as a reflection. 
The action of $\Nu\Kappa \beta$ on $V/\Nu\times V^\perp/\Kappa$ is specified by the entry (2-rot., ref.$'$) in Row 8 of Table 1.  

\vspace{.15in}
As discussed in \S 10 of \cite{R-T}, the fibration and dual fibration in Rows 1, 2, 6, 8, 9 of Table 1 
are affinely equivalent.  
No other pair of fibrations in Table 1, with the same fibers and the same bases,  
are affinely equivalent, 
since the corresponding 2-space groups are nonisomorphic. 
We next apply the theory for classifying co-Seifert geometric fibrations in \S 7 and \S 8 
to prove that every affine equivalence class of a Seifert geometric fibration of a compact, connected, flat 2-orbifold 
is represented by one of the fibrations described in Table 1. 

For simplicity, suppose that $\Delta$ and $\Mu$ are standard 1-space groups, $E^1/\langle e_1 + I\rangle$ 
or $E^1/\langle e_1+\nolinebreak I, -I\rangle$. 
We next describe $\mathrm{Iso}(\Delta,\Mu)$. 
Now $E^1/\Mu= \mathrm{O}$ or $\mathrm{I}$. 
The Lie group $\mathrm{Isom}(\mathrm{O})$ is isomorphic to $\mathrm{O}(2)$, 
and $\mathrm{Isom}(\mathrm{I})$ is a group of order 2 generated by the reflection ref.\ of $\mathrm{I}$ about its midpoint. 
Moreover $\mathrm{Aff}(\mathrm{O}) = \mathrm{Isom}(\mathrm{O})$ and 
$\mathrm{Aff}(\mathrm{I}) = \mathrm{Isom}(\mathrm{I})$, 
since length-preserving affinities of $E^1$ are isometries. 
Hence, for both isomorphism types of $\Mu$, the group $\mathrm{Out}(\Mu)$ has order 2 by Theorem 3 of \cite{R-T-I}. 
We represent  $\mathrm{Out}(\Mu)$ by the subgroup \{idt., ref.\} of $\mathrm{Isom}(E^1/\Mu)$ 
that is mapping isomorphically onto $\mathrm{Out}(\Mu)$ by $\Omega: \mathrm{Isom}(E^1/\Mu) \to \mathrm{Out}(\Mu)$. 

Assume first that $\Delta$ is infinite cyclic. 
The set $\mathrm{Iso}(\Delta,\Mu)$ consists of two elements 
corresponding to the pairs of inverse elements $\{$idt., idt.$\}$ and $\{$ref., ref.$\}$ 
of $\mathrm{Isom}(\mathrm{O})$ by Theorem \ref{T:7}. 
Thus, there are two affine equivalence classes of fibrations of type $(\,\cdot\,)$ 
corresponding to the case that $\Mu$ is infinite cyclic,  
and two affine equivalence classes of fibrations of type $[\,\cdot\,]$ 
corresponding to the case that $\Mu$ is infinite dihedral. 
The fibration of type $(\,\cdot\,)$,  corresponding to $\{$idt., idt.$\}$, $\{$ref., ref.$\}$, 
is described in Row 1, 4, respectively, of Table 1 by Theorem \ref{T:11}.  
The fibration of type $[\,\cdot\,]$, corresponding to $\{$idt., idt.$\}$, $\{$ref., ref.$\}$, 
is described in Row 3, 5, respectively, of Table 1 by Theorem \ref{T:11}. 

Now assume that both $\Delta$ and $\Mu$ are infinite dihedral. 
Then the set $\mathrm{Iso}(\Delta,\Mu)$ consists of three elements 
corresponding to the pairs of elements $\{\mathrm{idt.},\mathrm{idt.}\}$, 
$\{\mathrm{idt.}, \mathrm{ref.}\}$, $\{\mathrm{ref.}, \mathrm{ref.}\}$ of $\mathrm{Isom}(\mathrm{I}$) by Theorem \ref{T:8}. 
Thus, there are three affine equivalence classes of fibrations of type $[ - ]$. 
The corresponding fibration of type $[ - ]$ is described in Row 6, 9, 7, respectively, of Table 1 by Theorem \ref{T:12}. 

Finally, assume that $\Delta$ is infinite dihedral and $\Mu$ is infinite cyclic. 
There are two conjugacy classes of isometries of $\mathrm{O}$ of order 2, 
the class of the halfturn 2-rot.\ of $\mathrm{O}$ and the class of a reflection ref.\ of $\mathrm{O}$. 
By Lemma \ref{L:8} and Theorem \ref{T:10}, the set $\mathrm{Iso}(\Delta,\Mu)$ consists of six elements 
corresponding to the pairs of elements 
\{idt., idt.\}, \{idt., 2-rot.\}, \{idt., ref.\}, \{2-rot., 2-rot.\}, \{2-rot., ref.\}, \{ref., ref.\} of $\mathrm{Isom}(\mathrm{O})$.  
Only the pair \{ref., ref.\} falls into the case $E_1\cap E_2 \neq \{0\}$ of Theorem \ref{T:10}. 
Thus, there are six affine equivalence classes of fibrations of type $( - )$. 
The corresponding fibration of type $( - )$ is described in Row 3, 5, 7, 4, 8, 2, respectively, of Table 1 by Theorem \ref{T:12}. 
Thus, every affine equivalence class of Seifert geometric fibrations of a compact, connected, flat 2-orbifold is represented by one of the fibrations in Table 1. 
Our last theorem summarizes the classification of Seifert (or co-Seifert) geometric fibrations 
of compact, connected, flat 2-orbifolds. 

\begin{theorem}\label{T:13} 
There are exactly 13 affine equivalence classes of Seifert geometric fibrations 
of compact, connected, flat 2-orbifolds. There are two classes of type $(\,\cdot\,)$, 
two classes of type $[\,\cdot\,]$, three classes of type $[ - ]$, and six classes of type $( - )$. 
\end{theorem}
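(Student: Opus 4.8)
The plan is to convert the classification problem, via Theorems 5 and 10 of \cite{R-T}, into the enumeration up to isomorphism of pairs $(\Gamma,\Nu)$ in which $\Gamma$ is a $2$-space group and $\Nu$ is a normal subgroup with $\Gamma/\Nu$ infinite cyclic or infinite dihedral, and then to count these pairs using the sets $\mathrm{Iso}(\Delta,\Mu)$ developed in Sections \ref{S:5}--\ref{S:8}. For such a pair the fiber $V/\Nu$, with $V=\mathrm{Span}(\Nu)$, is a compact flat $1$-orbifold, hence a circle $\mathrm{O}$ or a closed interval $\mathrm{I}$ according as $\Nu$ is infinite cyclic or infinite dihedral, and the base is $\mathrm{O}$ or $\mathrm{I}$ according as $\Gamma/\Nu$ is infinite cyclic or infinite dihedral. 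So the fibrations fall into the four types $(\,\cdot\,)$, $[\,\cdot\,]$, $[ - ]$, $( - )$, and after fixing standard $1$-space group models $\Mu$ (for the fiber group) and $\Delta$ (for the quotient group) in each type, the number of affine equivalence classes of that type is $|\mathrm{Iso}(\Delta,\Mu)|$.

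Next I would record that for either standard $1$-space group $\Mu$ one has $\mathrm{Aff}(E^1/\Mu)=\mathrm{Isom}(E^1/\Mu)$, since length-preserving affinities of $E^1$ are isometries, and hence, by Theorem 3 of \cite{R-T-I}, that $\mathrm{Out}(\Mu)$ has order $2$, represented inside $\mathrm{Isom}(E^1/\Mu)$ by $\{\mathrm{idt.},\mathrm{ref.}\}$. Three of the four types then follow directly. For type $(\,\cdot\,)$ and type $[\,\cdot\,]$ the group $\Delta$ is infinite cyclic, so Theorem \ref{T:7} identifies $\mathrm{Iso}(\Delta,\Mu)$ with the set of conjugacy classes of pairs of inverse finite-order elements of $\mathrm{Out}(\Mu)$, namely $\{\mathrm{idt.},\mathrm{idt.}\}$ and $\{\mathrm{ref.},\mathrm{ref.}\}$; this gives two classes of each type. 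For type $[ - ]$ both $\Delta$ and $\Mu$ are infinite dihedral, so $\Mu$ has trivial center and Theorem \ref{T:8} identifies $\mathrm{Iso}(\Delta,\Mu)$ with the conjugacy classes of pairs of order $1$ or $2$ elements of $\mathrm{Out}(\Mu)$ with finite-order product, namely $\{\mathrm{idt.},\mathrm{idt.}\}$, $\{\mathrm{idt.},\mathrm{ref.}\}$, $\{\mathrm{ref.},\mathrm{ref.}\}$; this gives three classes.

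The delicate case, and the one I expect to be the main obstacle, is type $( - )$, where $\Delta$ is infinite dihedral but $\Mu$ is infinite cyclic and so has nontrivial center, so $\psi\colon\mathrm{Aff}(\Delta,\Mu)\to\mathrm{Iso}(\Delta,\Mu)$ need not be injective. Here I would first use Lemma \ref{L:8} to identify $\mathrm{Aff}(\Delta,\Mu)$ with conjugacy classes of unordered pairs of order $1$ or $2$ elements of $\mathrm{Aff}(E^1/\Mu)=\mathrm{Isom}(\mathrm{O})\cong\mathrm{O}(2)$; up to conjugacy the only nontrivial such elements are the halfturn $2$-rot.\ and a reflection ref., so the pairs are $\{\mathrm{idt.},\mathrm{idt.}\}$, $\{\mathrm{idt.},2\text{-rot.}\}$, $\{\mathrm{idt.},\mathrm{ref.}\}$, $\{2\text{-rot.},2\text{-rot.}\}$, $\{2\text{-rot.},\mathrm{ref.}\}$, together with a one-parameter family of conjugacy classes of pairs of reflections. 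To see that these descend to exactly six isomorphism classes, I would invoke Theorem \ref{T:10}: the $(-1)$-eigenspaces $E_1,E_2\subseteq\mathrm{Span}(Z(\Mu))=E^1$ are nonzero precisely when the corresponding generators act on the circle fiber $V/\Nu$ as reflections, so $E_1\cap E_2\ne\{0\}$ only for the pairs of reflections, and for those Theorem \ref{T:10} shows that all pairs $\{\mathrm{ref.},(v+\ov I)\,\mathrm{ref.}\}$ with $v\in E^1$ lie in one and the same fiber of $\psi$ (the nonrigidity exhibited concretely in Example 1), while each of the five rigid pairs is its own $\psi$-fiber by the $v=0$ case of Theorem \ref{T:9}; hence $|\mathrm{Iso}(\Delta,\Mu)|=6$ for type $( - )$.

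It remains to match the $2+2+3+6=13$ isomorphism classes to the fibrations and dual fibrations of the $2$-space groups with IT numbers $1$--$9$ in Table 1. Given the diagonal action of the structure group $\Gamma/\Nu\Kappa$ on $V/\Nu\times V^\perp/\Kappa$, Theorems \ref{T:11} and \ref{T:12} say exactly when a pair of generators of $\Gamma/\Nu\Kappa$ lifts to a canonical pair of generators of $\Gamma/\Nu$, which pins down $\Gamma$ and the fibration; carrying this out gives the assignments: type $(\,\cdot\,)$ to Rows $1$ and $4$; type $[\,\cdot\,]$ to the duals in Rows $3$ and $5$; type $[ - ]$ to Rows $6$, $9$, $7$; and type $( - )$ to Rows $3$, $5$, $7$, $4$, $8$, $2$. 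Finally, since the fibration and dual fibration of a row are affinely equivalent exactly for Rows $1,2,6,8,9$, and otherwise the relevant $2$-space groups are pairwise nonisomorphic, these $13$ classes are distinct and distribute as $2+2+3+6$ over the four types, as claimed.
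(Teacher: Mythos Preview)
Your proposal is correct and follows essentially the same approach as the paper: reduce to computing $|\mathrm{Iso}(\Delta,\Mu)|$ for the four combinations of $1$-space groups, handle the cyclic-base and the centerless-fiber cases via Theorems \ref{T:7} and \ref{T:8}, and in the remaining type $( - )$ enumerate $\mathrm{Aff}(\Delta,\Mu)$ via Lemma \ref{L:8} and collapse the one-parameter family of reflection pairs to a single class using Theorems \ref{T:9}/\ref{T:10}, then match everything to Table~1 through Theorems \ref{T:11} and \ref{T:12}. Your treatment of the $( - )$ case is in fact a bit more explicit than the paper's in spelling out why exactly the pairs of two reflections are the only ones with $E_1\cap E_2\neq\{0\}$ and how the $v$-parameter sweeps out the entire reflection family (cf.\ Example~1), but the logic is the same.
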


\end{document}